\documentclass[10pt,a4paper]{article}

\usepackage{amsmath,amssymb,amsthm}
\usepackage{enumerate}
\usepackage{xcolor}
\usepackage{url}
\usepackage{authblk}
\usepackage{tikz-cd}
\usepackage[normalem]{ulem}
\usepackage{algorithm}
\usepackage{algpseudocode}
\usepackage{booktabs}
\usepackage{multicol}
\usepackage{mathdots}
\usepackage{etoolbox}
\usepackage[T1]{fontenc}
\usepackage[colorlinks=true, linkcolor=blue, citecolor=blue, urlcolor=blue]{hyperref}

\patchcmd{\thebibliography}{\section}{\subsection}{}{}

\makeatletter
\g@addto@macro\@openbib@code{\setlength{\itemsep}{0pt}}
\makeatother

\newtheorem{theorem}{Theorem}[section]
\newtheorem{lem}[theorem]{Lemma}
\newtheorem{rmk}[theorem]{Remark}
\newtheorem{prop}[theorem]{Proposition}
\newtheorem{cor}[theorem]{Corollary}
\newtheorem{conj}[theorem]{Conjecture}

\definecolor{darkgreen}{rgb}{0,0.5,0}

\author[1, 4]{Luciano N. Grippo \thanks{lgrippo@campus.ungs.edu.ar}}
\author[2, 4]{Min Chih Lin \thanks{oscarlin@dc.uba.ar}}
\author[1, 4]{Verónica Moyano \thanks{vmoyano@campus.ungs.edu.ar}}
\author[3, 4]{Camilo Vera \thanks{cvera@ic.fcen.uba.ar}}
\affil[1]{Universidad Nacional de General Sarmiento. Instituto de Ciencias; Argentina.}
\affil[2]{Instituto de Cálculo and Departamento de Computación, Universidad de Buenos Aires, Argentina.}
\affil[3]{Instituto de Cálculo and Departamento de Matemática, Universidad de Buenos Aires, Argentina.}
\affil[4]{Consejo Nacional de Investigaciones Científicas y Técnicas, Argentina.}

\title{Counting perfect edge dominating sets: extremal results and linear-time algorithms}
\date{}

\begin{document}

\maketitle

\begin{abstract}
An edge of a graph \emph{dominates} itself and each edge adjacent to it. A \emph{perfect edge dominating set} is a subset of edges such that each edge outside the subset is dominated by exactly one edge of the subset. In this article, we characterize the extremal graphs on $n$ vertices in the classes of trees, forests, and chordal graphs with respect to the number of perfect edge dominating sets. Moreover, we derive linear-time algorithms for counting perfect edge dominating sets and for counting dominating induced matchings in generalized series-parallel graphs and chordal graphs.
\medskip

\noindent \textbf{Keywords}: perfect edge dominating sets; dominating induced matchings; counting algorithms; chordal graphs; generalized series-parallel graphs.
\end{abstract}

\section{Introduction}

The enumeration of combinatorial structures in graphs has been
extensively studied over the last decades. Problems such as counting
independent sets, matchings, and dominating sets play a central role in
combinatorics, graph algorithms, and complexity theory.

Counting problems often exhibit significantly higher computational
complexity than their associated decision problems. A foundational work
in the complexity theory of counting problems was published in 1979 \cite{LV-1979}. In particular, that work established the
\(\#P\)-completeness of several counting problems, including counting
perfect matchings, minimal vertex covers, maximal cliques, and directed
spanning trees.

A \emph{dominating set} of a graph \(G\) is a subset
\(S\subseteq V(G)\) such that every vertex outside \(S\) is adjacent to
at least one vertex of \(S\). The counting of dominating sets has
motivated a rich literature involving domination polynomials, recursive
formulas, and structural decompositions. The \emph{domination polynomial}
of a graph was introduced in \cite{AP-2014}. If \(G\) is a graph on
\(n\) vertices, its domination polynomial is
\[
D(G,x)=\sum_{i=\gamma(G)}^n d(G,i)x^i,
\]
where \(d(G,i)\) is the number of dominating sets of \(G\) of cardinality
\(i\), and \(\gamma(G)\) is the \emph{domination number} of \(G\), that
is, the minimum cardinality of a dominating set of \(G\).

In contrast, analogous counting questions for edge domination structures
remain far less explored. An edge \(e\) of a graph \(G\) is said to
\emph{dominate} itself and every edge adjacent to it.

A \emph{dominating induced matching} (DIM) of a graph \(G\), also called
an \emph{efficient edge dominating set} (EED), is a subset of edges such
that every edge of \(G\) is dominated by exactly one edge of the subset.
Not every graph admits a DIM. Moreover, all the DIMs of a graph, whenever
they exist, have the same cardinality \cite{Lu-Ko-Tang-2002}. The problem
of deciding whether a given graph admits a DIM is known as the
\emph{DIM problem}, and was introduced in
\cite{Gr-Sl-Sh-Ho-1993}. The DIM problem is NP-complete in general graphs
\cite{Gr-Sl-Sh-Ho-1993}, and remains NP-complete in \(p\)-regular graphs
for every \(p\geq 3\) \cite{CARDOSO20083060} and in planar bipartite
graphs \cite{Lu-Ko-Tang-2002}. However, it can be solved in linear time
in several graph classes, including \(P_7\)-free graphs
\cite{Br-Mo-2014}, claw-free graphs \cite{LinMS14}, generalized
series-parallel graphs, and chordal graphs \cite{Lu-Ko-Tang-2002}.

A \emph{perfect edge dominating set} (PED-set) of a graph is a subset of
edges such that every edge outside the subset is dominated by exactly one
edge within the subset. Unlike a DIM, every graph admits a PED-set,
namely its entire edge set. Furthermore, every DIM of a graph is a
PED-set of minimum cardinality \cite{Lu-Ko-Tang-2002}. The
\emph{PED problem}, introduced in \cite{Lu-Ko-Tang-2002}, takes as input
a graph \(G\) and an integer \(k\), and asks whether \(G\) has a PED-set
of cardinality at most \(k\). The PED problem is NP-complete in general
graphs and remains NP-complete in bipartite graphs
\cite{Lu-Ko-Tang-2002} and in \(H\)-free graphs whenever \(H\) is not a
linear forest \cite{Li-Lo-Mo-Sz}. On the positive side, it can be solved
in linear time in generalized series-parallel graphs and chordal graphs
\cite{Lu-Ko-Tang-2002}, and by a robust linear-time algorithm in
\(P_5\)-free graphs \cite{Li-Lo-Mo-Sz}.

Beyond finding a perfect edge dominating set of minimum cardinality, it
is natural to investigate the multiplicity of such structures. In a
recent work, we proved that deciding whether a DIM-less graph admits at
least two PED-sets is NP-complete
\cite{Grippo-Lin-Vera-2026}. Thus, even the most basic decision question
concerning the number of PED-sets is computationally difficult. This
provides further motivation for the systematic study of the number of
PED-sets admitted by a graph, from both algorithmic and extremal
perspectives.

The \emph{DIM-counting problem} for a graph \(G\) consists of determining
the number of dominating induced matchings admitted by \(G\). This number
is denoted by \(\mu(G)\) \cite{Moyano-2017-thesis}. The same thesis
establishes tight bounds for \(\mu(G)\) in the classes of general graphs,
triangle-free graphs, and connected graphs. Moreover, in \cite{LMS}, is presented
an \(O(n)\)-time algorithm for counting dominating induced matchings in
claw-free graphs.

The \emph{PED-counting problem} for a graph \(G\) consists of determining
the number of perfect edge dominating sets admitted by \(G\). Motivated
by the scarcity of results on the enumeration of edge domination
structures, we investigate extremal and algorithmic aspects of the
PED-counting problem. We identify the graphs on \(n\) vertices that
maximize the number of perfect edge dominating sets within the classes
of trees, forests, and chordal graphs. We also present linear-time
algorithms for counting perfect edge dominating sets and dominating
induced matchings in generalized series-parallel graphs and chordal
graphs.

The paper is organized as follows. Section~\ref{sec: preliminaries}
presents definitions and preliminary results used throughout the paper.
In Section~\ref{sec: extpro}, we study extremal questions concerning the
number of perfect edge dominating sets. We show that paths are extremal
among trees, determine the extremal forests, and characterize the
extremal chordal graphs. In Section~\ref{sec: alg}, we consider the
associated counting problems and, by adapting the structural algorithms
presented in \cite{Lu-Ko-Tang-2002}, derive linear-time algorithms
for counting perfect edge dominating sets and dominating induced
matchings in generalized series-parallel graphs and chordal graphs.
Finally, Section~\ref{sec: conc} contains concluding remarks and discusses
some open problems.

\section{Preliminaries}~\label{sec: preliminaries}

\subsection{Basic concepts}

Let $G$ be a graph. The vertex and edge sets of $G$ are denoted by $V(G)$ and $E(G)$, respectively. If $uv\in E(G)$, we say that $u$ (resp. $v$) is \emph{adjacent} to $v$ (resp. $u$); we also say that the edge $uv$ is \emph{incident} to $u$ and $v$. We say that a graph $H$ is a \emph{subgraph} of $G$ if $V(H)\subseteq V(G)$ and $E(H)\subseteq E(G)$; and it is an \emph{induced subgraph} if further $E(H)=\{uv\in E(G):u, v\in V(H)\}$. If $S\subseteq V(G)$, the \emph{subgraph induced by $S$} is denoted by $G[S]$, and $G-S$ stands for $G[V(G)\setminus S]$.

The \emph{disjoint union} of the graphs $G$ and $H$, denoted by $G + H$, is the graph whose vertex set is $V(G + H)=V(G)\cup V(H)$ and edge set is $E(G + H)=E(G)\cup E(H)$. If $k\in\mathbb{N}$, then $k\,G$ denotes the disjoint union of $k$ copies of $G$.

Two graphs $G$ and $H$ are \emph{isomorphic} if there exists a bijection $f:V(G)\longrightarrow V(H)$ such that $uv\in E(G)$ if and only if $f(u) f(v)\in E(H)$.

If $u$ is a vertex of a graph $G$, its \emph{neighborhood} in $G$ is $N_G(u):=\{v\in V(G):uv\in E(G)\}$, and its \emph{degree} in $G$ is $d_G(u):=|N_G(u)|$. When the context is clear, we write simply $N(u)$ and $d(u)$. A vertex $u\in V(G)$ is called a \emph{leaf} of $G$ if $d(u)=1$. For an induced subgraph $H$ of $G$ and a vertex $u\in V(G)$, define $N_H(u):=N_G(u)\cap V(H)$, and $d_H(u):=|N_H(u)|$. Two edges are \emph{adjacent} if they are incident at the same vertex.

Let $G$ be a graph, and let $S\subseteq V(G)$ and $v\not\in V(G)$ be a new vertex. The \emph{contraction of $S$ into $v$} consists of replacing the set $S$ with the single vertex $v$, and making every neighbor of $S$ adjacent to $v$. The resulting graph, denoted by $G_{S, v}$, has vertex set $V(G_{S, v})=(V(G)\setminus S)\cup\{v\}$ and edge set $E(G_{S, v})=(E(G)\setminus\{e\in E(G):e\,\,\text{is incident to a vertex of}\,\,S\})\cup\{vx:x\in N_G(S)\}$.

We write $P_k$ and $C_k$ to denote the \emph{path} and the \emph{cycle on $k$ vertices}, respectively. The cycle $C_3$ is called a \emph{triangle}. A graph $G$ is \emph{connected} if for all pairs of vertices of $G$ there exists a path that joins them. A \emph{tree} is a connected acyclic graph, and a \emph{forest} is an acyclic graph. A \emph{linear forest} is a forest whose connected components are paths. A graph is \emph{complete} if each pair of vertices of it is adjacent, and we write $K_n$ to denote the complete graph on $n$ vertices. A \emph{clique} is a set of pairwise adjacent vertices. An \emph{independent set} is a set of pairwise non-adjacent vertices.

A \emph{star} on $q+1$ vertices is a graph composed of $q$ leaves and one vertex of degree $q$, and it is denoted by $K_{1, q}$. The vertex of degree $q$ is called the \emph{center} of the star. A graph is called a \emph{chordal graph} if it does not contain any cycle on at least four vertices as an induced subgraph.

\subsection{Perfect edge dominating sets in general graphs}

Let $G$ be a graph. Given two edges $e$ and $f$ of $G$, we say that $e$ \emph{dominates} $f$ if $e=f$, or if they are adjacent. A subset $P\subseteq E(G)$ is a \emph{perfect edge dominating set} (PED-set) of $G$ if every edge in $E(G)\setminus P$ is dominated by exactly one edge of $P$. Note that a PED-set always exists in any graph, since the whole set of edges of a graph is a PED-set of it.

A \emph{coloring} of $G$ is an assignment of a label to each vertex of $G$. In our setting, the labels are the colors black ($\mathsf{b}$), yellow ($\mathsf{y}$), and white ($\mathsf{w}$). Let $P$ be a PED-set of $G$. We define the function $\sigma_P:V(G)\longrightarrow\{\mathsf{b}, \mathsf{y}, \mathsf{w}\}$ which partitions $V(G)$ into three sets $B$, $Y$ and $W$, where $v\in B$ (respectively $v\in Y$, $v\in W$) if and only if $\sigma_P(v)=\mathsf{b}$ (respectively $\sigma_P(v)=\mathsf{y}$, $\sigma_P(v)=\mathsf{w}$). These sets are defined as follows:
\begin{itemize}
     \item $B$ consists of the vertices incident with at least two edges of $P$,
     \item $Y$ consists of the vertices incident with exactly one edge of $P$,
     \item $W$ consists of the vertices incident with no edge of $P$.
\end{itemize}

We call $\sigma_P$ the \emph{3-coloring associated with $P$}. Such a coloring satisfies the following properties:
\begin{enumerate}
     \item $W$ is an independent set,
     \item A vertex $v\in V(G)\setminus W$ is colored yellow if and only if $v$ is a leaf of $G-W$,
     \item If $v$ is colored white, then all its neighbors are colored yellow, and
     \item If $v$ is colored black, then it has no neighbors colored white and $d(v)\geq 2$.
\end{enumerate}

Conversely, every coloring of $G$ satisfying the above conditions is associated with a perfect edge dominating set of $G$, namely the set of edges whose endpoints are assigned non-white colors.

Any coloring satisfying these conditions is called a \emph{valid 3-coloring of $G$}. Consequently, there is a bijection between the perfect edge dominating sets of $G$ and the valid 3-colorings of $G$.

There can be three possible types of PED-sets for a graph $G$. Let $P$ be a PED-set of $G$.
\begin{itemize}
     \item We say that $P$ is the \emph{trivial PED-set} if $P=E(G)$.
     \item We say that $P$ is a \emph{DIM} if no two edges in $P$ are adjacent.
     \item We say that $P$ is a \emph{proper PED-set} if it is neither the trivial PED-set nor a DIM.
\end{itemize}

Now, consider a connected graph $G$ and a PED-set $P$ of it. Let $(B, Y, W)$ be the partition of $V(G)$ induced by $P$.
\begin{itemize}
     \item If $|V(G)|=1$, then $W=V(G)$ and $B=Y=\emptyset$. In this case, $P$ is the trivial PED-set and a DIM.
     \item If $|V(G)|=2$, then $Y=V(G)$ and $B=W=\emptyset$. In this case, $P$ is the trivial PED-set and a DIM.
     \item If $|V(G)|\geq 3$, then:
     \begin{itemize}
          \item A 3-coloring induces the trivial PED-set if and only if $W=\emptyset$. Since $G$ is connected, there exists at least one vertex with degree at least two, and this vertex is assigned the black color.
          \item A 3-coloring induces a DIM if and only if $B=\emptyset$, $Y\neq\emptyset$ and $W\neq\emptyset$.
          \item A 3-coloring induces a proper PED-set if and only if $B\neq\emptyset, Y\neq\emptyset$ and $W\neq\emptyset$.
     \end{itemize}
\end{itemize}

The number of perfect edge dominating sets of a graph $G$ will be denoted by $\tilde{\mu}(G)$. Since every graph $G$ admits the trivial PED-set, and every DIM is also a PED-set, it follows that $\tilde{\mu}(G)\geq 1 + \mu(G)$, where $\mu(G)$ denotes the number of dominating induced matchings that $G$ admits (see \cite{Moyano-2017-thesis}).

\begin{rmk}~\label{rmk: connected components}
If $G_1, \ldots, G_k$ are the connected components of a graph $G$, then $\tilde{\mu}(G)=\displaystyle \prod_{i=1}^k \tilde{\mu}(G_i)$.
\end{rmk}

\begin{rmk}
If $G$ and $H$ are two isomorphic graphs, then $\tilde{\mu}(G)=\tilde{\mu}(H)$.
\end{rmk}

Let $G$ be a graph, and let $v$ be a vertex of $G$. Let $c$ be a color, which can be either black ($\mathsf{b}$), yellow ($\mathsf{y}$), or white ($\mathsf{w}$). We define the following three functions:
\begin{itemize}
     \item Let $\tilde{\mu}(G, v, c)$ denote the number of PED-sets of $G$ in which $v$ receives color $c$.
     \item Let $\varphi_v(G)$ denote the number of colorings of $G$ in which $v$ is colored yellow, all its neighbors in $V(G)$ are colored white, and the remaining vertices are colored so as to form a valid 3-coloring of $G-v$.
     \item $\varphi^*_v(G):=\tilde{\mu}(G)+\varphi_v(G)$.
\end{itemize}

\begin{rmk}~\label{rmk: vertex color}
Let $G$ be a graph and let $v$ be a vertex of $G$. Then it is straightforward to see that $\tilde{\mu}(G)=\tilde{\mu}(G, v, \mathsf{w})+\tilde{\mu}(G, v, \mathsf{y})+\tilde{\mu}(G, v, \mathsf{b})$.
\end{rmk}

As a consequence of the bijection between PED-sets and valid 3-colorings, the following lemma is immediate.

\begin{lem}
Given a graph $G$, counting the number of perfect edge dominating sets of $G$ is equivalent to counting all valid 3-colorings of $G$.
\end{lem}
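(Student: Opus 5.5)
The statement follows once the correspondence $P\mapsto\sigma_P$ between PED-sets of $G$ and valid 3-colorings of $G$ is shown to be a bijection. Then $\tilde{\mu}(G)$ equals the number of valid 3-colorings, and any procedure counting one counts the other. The plan is to check well-definedness, build an explicit inverse, and verify both compositions are identities.

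\textbf{Step 1 (well-definedness).} For a PED-set $P$ with partition $(B,Y,W)$, I verify the four properties.
\begin{enumerate}
\item If $u,v\in W$ were adjacent, the edge $uv\notin P$ would be dominated by no edge of $P$, since no edge of $P$ touches $u$ or $v$. So $W$ is independent.
\item Every edge with both ends in $V(G)\setminus W$ lies in $P$. Otherwise it is dominated by an edge of $P$ at each endpoint, and these are two distinct such edges, contradicting perfectness. Hence $E(G-W)=P$, so a non-white vertex has exactly as many $P$-edges as its degree in $G-W$. In particular it is yellow exactly when it is a leaf of $G-W$.
\item If $v\in W$ and $u\in N(v)$, the edge $uv\notin P$ must be dominated by exactly one edge of $P$. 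That edge is at $u$, so $u$ has exactly one $P$-edge and $u\in Y$.
\item A black vertex has at least two $P$-edges, so $d(v)\ge 2$, and by (3) it has no white neighbour.
\end{enumerate}

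\textbf{Step 2 (inverse).} Given a valid coloring, set $P=E(G-W)$. Take an edge $uv\notin P$; by (1) exactly one endpoint, say $v$, is white. By (3), $u$ is yellow, so by (2) $u$ has exactly one edge into $V(G)\setminus W$. Since $v$ lies on no edge of $P$, $uv$ is dominated by exactly one edge of $P$. Thus $P$ is a PED-set.

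It remains to check that $\sigma_P$ returns the original coloring. The white vertices of $\sigma_P$ are those on no edge of $P$. A non-white vertex is on some edge of $P$: if yellow, it is a leaf of $G-W$ by (2); if black, it has degree at least $2$ by (4) and all its neighbours are non-white. Every originally white vertex is on no edge of $P$. For non-white vertices, degree in $G-W$ equal to $1$ gives yellow and at least $2$ gives black, matching (2).

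Conversely, starting from $P$, Step 1(2) gives $E(G-W)=P$, so $P\mapsto\sigma_P\mapsto P$ is the identity.

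\textbf{Main obstacle.} The only delicate point is Step 1(2), the identity $E(G-W)=P$, which needs the ``exactly one'' condition of perfectness. Everything else is direct unwinding of definitions. A minor check is that a vertex isolated in $G$ is forced white, which is consistent with both directions.
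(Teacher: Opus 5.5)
Your proof is correct and follows the paper's route: the paper states the lemma as an immediate consequence of the bijection $P\mapsto\sigma_P$ between PED-sets and valid 3-colorings, whose inverse sends a coloring with white set $W$ to $E(G-W)$. The paper only asserts the four properties and the converse, whereas you verify them, most importantly the identity $E(G-W)=P$, which you correctly derive from the ``exactly one'' condition of perfectness.
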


Let $\mathcal{C}$ be a graph class. An \emph{extremal graph on $n$ vertices in $\mathcal{C}$} is a graph on $n$ vertices belonging to $\mathcal{C}$ that maximizes the number of perfect edge dominating sets among all graphs on $n$ vertices in $\mathcal{C}$.

\subsection{Perfect edge dominating sets in paths}~\label{subsec: paths}

In this subsection, we present a recursive formula for the number of PED-sets of the path on $n$ vertices, $P_n$.

\begin{prop}~\label{prop: ped of a graph plus P3}
Let $H$ be a graph and let $x$ be a leaf of $H$. Consider a graph $R$ isomorphic to $P_3$ with vertex set $V(R)=\{v_1,v_2,v_3\}$, where $v_1$ and $v_3$ are its leaves. Define $G'$ as the graph obtained from $H + R$ by adding the edge $v_3 x$, as shown in Figure \ref{Fig.: graph of prop 2.1}, and let $G:=G'-v_1$. Then $\tilde{\mu}(G')=\tilde{\mu}(G)+\tilde{\mu}(H)$.
\end{prop}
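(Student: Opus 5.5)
The plan is to work entirely with valid 3-colorings, using the bijection between PED-sets and valid 3-colorings, and to split the colorings of $G'$, $G$ and $H$ according to the colors of the path vertices. In $G'$ the attached structure is the path $x\,v_3\,v_2\,v_1$, with $x$ of degree $2$. In $G$ it is $x\,v_3\,v_2$, with $v_2$ a leaf. In $H$ the vertex $x$ is a leaf, so by Remark~\ref{rmk: vertex color} and property~4 we have $\tilde{\mu}(H)=\tilde{\mu}(H,x,\mathsf{w})+\tilde{\mu}(H,x,\mathsf{y})$. Let $y$ be the unique neighbor of $x$ in $H$.

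\emph{Step 1: colorings of $G'$.} The leaf $v_1$ cannot be black. I expect exactly three cases.
\begin{itemize}
\item[(a)] $v_1=\mathsf{w}$. Then $v_2=\mathsf{y}$ and $v_3$ is non-white.
\item[(b)] $v_1=\mathsf{y}$ and $v_2=\mathsf{b}$. Then $v_3$ is non-white.
\item[(c)] $v_1=v_2=\mathsf{y}$. Then $v_3=\mathsf{w}$ and hence $x=\mathsf{y}$.
\end{itemize}
In (a) and (b), the vertex $v_3$ sees $v_2$ only as a non-white neighbor. So each of these classes is in bijection with the valid colorings of $G$ in which $v_2$ is yellow; call this number $A$. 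In (c), $x$ is yellow with a white neighbor $v_3$, so $y$ must be non-white. Restricting to $H$ then gives exactly the valid colorings of $H$ with $x$ yellow. Hence
\[
\tilde{\mu}(G')=2A+\tilde{\mu}(H,x,\mathsf{y}).
\]

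\emph{Step 2: colorings of $G$.} Since $v_2$ is a leaf of $G$, it is either yellow (counted by $A$) or white. Let $D$ be the number of colorings with $v_2$ white, which forces $v_3=\mathsf{y}$ and $x$ non-white. Then $\tilde{\mu}(G)=A+D$.

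\emph{Step 3: split $A$ by the color of $v_3$.}
\begin{itemize}
\item If $v_3=\mathsf{y}$ (with $v_2=\mathsf{y}$), then $x$ is white and $y$ is yellow. Restriction gives a bijection with the colorings of $H$ in which $x$ is white, so this part contributes $\tilde{\mu}(H,x,\mathsf{w})$.
\item If $v_3=\mathsf{b}$, then $x$ is non-white. The constraints on $x$ and on the rest of $H$ are then the same as in the $D$ case: there too $x$ has the non-white neighbor $v_3$ and the rest is identical. This gives a bijection between these colorings and the $D$ colorings, obtained by swapping $(v_2,v_3)=(\mathsf{y},\mathsf{b})\leftrightarrow(\mathsf{w},\mathsf{y})$.
\end{itemize}
So $A=\tilde{\mu}(H,x,\mathsf{w})+D$. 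Combining the three steps,
\[
\tilde{\mu}(G')=A+D+\tilde{\mu}(H,x,\mathsf{w})+\tilde{\mu}(H,x,\mathsf{y})=\tilde{\mu}(G)+\tilde{\mu}(H),
\]
as required.

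The main obstacle is checking that each restriction or swap really preserves validity. In particular, one must verify that whether $x$ is yellow or black depends only on whether $v_3$ is white and on the color of $y$. One must also verify that $v_3$ itself has a legal color in each case: $v_3$ may be black only if it has degree $\ge 2$ and no white neighbor, and it is yellow exactly when it has exactly one non-white neighbor. I would verify these conditions case by case against properties 1–4 of valid 3-colorings.
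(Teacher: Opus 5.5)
Your proof is correct and is essentially the paper's argument. Both split on the colors of $v_1$, $v_2$, $v_3$ and reach the same four terms $\tilde{\mu}(G,v_2,\mathsf{y})+\tilde{\mu}(H,x,\mathsf{y})+\tilde{\mu}(H,x,\mathsf{w})+\tilde{\mu}(G,v_2,\mathsf{w})$. The only difference is bookkeeping: you first identify the case $v_1=\mathsf{y}$, $v_2=\mathsf{b}$ with $\tilde{\mu}(G,v_2,\mathsf{y})$ and then split by $v_3$, whereas the paper splits that case by $v_3$ directly. The paper's recoloring $(v_2,v_3)=(\mathsf{b},\mathsf{b})\mapsto(\mathsf{w},\mathsf{y})$ is just the composition of your two bijections.
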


\begin{figure}
     \centering
     \begin{tikzpicture}[b/.style={circle, scale=0.5, draw=black, fill}, ye/.style={circle, scale=0.5, draw=black, fill=yellow}, w/.style={circle, scale=0.5, draw=black, fill=white}, line width=0.5pt]
          \draw[fill=brown!30] (0,0) circle (1);
          \node at (0,0) {$H-x$};
          \node (0) [b] at (-1,0){};
          \node (1) [b] at (-2,0){};
          \node[above] at (-2,0.1) {\footnotesize $x$};
          \node (2) [b] at (-3,0){};
          \node[above] at (-3,0.1) {\footnotesize $v_3$};
          \node (3) [b] at (-4,0){};
          \node[above] at (-4,0.1) {\footnotesize $v_2$};
          \node (4) [b] at (-5,0){};
          \node[above] at (-5,0.1) {\footnotesize $v_1$};
          \draw (4) -- (3) -- (2) -- (1) -- (0);
     \end{tikzpicture}
     \caption{Graph $G'$ described in Proposition \ref{prop: ped of a graph plus P3}}
     \label{Fig.: graph of prop 2.1}
\end{figure}

\begin{proof}
We compute $\tilde{\mu}(G')$ according to the color assigned to $v_1$. Since $v_1$ is a leaf of $G'$, it can be colored either white or yellow. Hence, $\tilde{\mu}(G')=\tilde{\mu}(G', v_1, \mathsf{w}) + \tilde{\mu}(G', v_1, \mathsf{y})$.

If $v_1$ is colored white, then $v_2$ must be colored yellow. Therefore, $\tilde{\mu}(G', v_1, \mathsf{w})=\tilde{\mu}(G, v_2, \mathsf{y})$.

Assume now that $v_1$ is colored yellow. Then $v_2$ must be assigned a non-white color. If $v_2$ is colored yellow, then $v_3$ must be colored white, and $x$ must be colored yellow. Moreover, the unique non-white neighbor of $x$ lies in $V(H)$. Therefore, the number of PED-sets of $G'$ in this case is $\tilde{\mu}(H, x, \mathsf{y})$.

On the other hand, if $v_2$ is colored black, then $v_3$ must be assigned a non-white color. If $v_3$ is colored yellow, then $x$ must be colored white, and the number of PED-sets of $G'$ in this case is $\tilde{\mu}(H, x, \mathsf{w})$. Finally, if $v_3$ is colored black, then $x$ is assigned a non-white color. Removing $v_1$, recoloring $v_2$ from black to white, and recoloring $v_3$ from black to yellow yield a valid 3-coloring for $G$. Therefore, the number of PED-sets of $G'$ in this case is $\tilde{\mu}(G, v_2, \mathsf{w})$. This change is reversible: starting from such a coloring of $G$ with $v_2$ white, one recovers the original coloring of $G'$ by adding $v_1$ as a vertex colored yellow and recoloring $v_2$ and $v_3$ black.

Therefore, $\tilde{\mu}(G', v_1, \mathsf{y})=\tilde{\mu}(H, x, \mathsf{y}) + \tilde{\mu}(H, x, \mathsf{w}) + \tilde{\mu}(G, v_2, \mathsf{w})$. Since $x$ is a leaf of $H$, $\tilde{\mu}(H, x, \mathsf{y}) + \tilde{\mu}(H, x, \mathsf{w})=\tilde{\mu}(H)$. Likewise, since $v_2$ is a leaf of $G$, $\tilde{\mu}(G, v_2, \mathsf{y}) + \tilde{\mu}(G, v_2, \mathsf{w})=\tilde{\mu}(G)$. Consequently, $\tilde{\mu}(G')=\tilde{\mu}(G) + \tilde{\mu}(H)$.
\end{proof}

The paths on a single vertex and on two vertices have only one PED-set. The path on three vertices has three PED-sets, whose associated 3-colorings are shown in the Figure \ref{Fig.: pedsets of P_3}. Therefore, $\tilde{\mu}(P_1)=1$, $\tilde{\mu}(P_2)=1$ and $\tilde{\mu}(P_3)=3$.

\begin{figure}
     \centering
     \begin{tikzpicture}[b/.style={circle, scale=0.5, draw=black, fill}, ye/.style={circle, scale=0.5, draw=black, fill=yellow}, w/.style={circle, scale=0.5, draw=black, fill=white}, line width=0.5pt]
          \node (0) [ye] at (-0.87,0){};
          \node[above] at (-0.87,0.1) {\footnotesize $v_1$};
	  \node (1) [b] at (0,0){};
          \node[above] at (0,0.1) {\footnotesize $v_2$};
	  \node (2) [ye] at (0.87,0){};
          \node[above] at (0.87,0.1) {\footnotesize $v_3$};
          \node (a)[scale=0.9] at (0-0.075,1){(a) One black};
	  \draw (0) -- (1) -- (2);
	  
	  \node (a0) [ye] at (-0.87+3,0){};
          \node[above] at (-0.87+3,0.1) {\footnotesize $v_1$};
	  \node (a1) [ye] at (0+3,0){};
          \node[above] at (0+3,0.1) {\footnotesize $v_2$};
	  \node (a2) [w] at (0.87+3,0){};
          \node[above] at (0.87+3,0.1) {\footnotesize $v_3$};
          \node (b)[scale=0.9] at (0-0.075+3,1){(b) Right white};
	  \draw (a0) -- (a1) -- (a2);
	  
	  \node (b0) [w] at (-0.87+6,0){};
          \node[above] at (-0.87+6,0.1) {\footnotesize $v_1$};
	  \node (b1) [ye] at (0+6,0){};
          \node[above] at (0+6,0.1) {\footnotesize $v_2$};
	  \node (b2) [ye] at (0.87+6,0){};
          \node[above] at (0.87+6,0.1) {\footnotesize $v_3$};
          \node (c)[scale=0.9] at (0-0.075+6,1){(c) Left white};
	  \draw (b0) -- (b1) -- (b2);
     \end{tikzpicture}
     \caption{All valid 3-colorings of the path on three vertices.}
     \label{Fig.: pedsets of P_3}
\end{figure}

\begin{theorem}~\label{thm: recursive formula}
If $n\geq 4$, then $\tilde{\mu}(P_n)=\tilde{\mu}(P_{n-1})+\tilde{\mu}(P_{n-3})$.
\end{theorem}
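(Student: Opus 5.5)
The plan is to apply Proposition~\ref{prop: ped of a graph plus P3} directly, with $H$ chosen as a shorter path. For $n\geq 4$, write $P_n$ as $v_1v_2v_3x\cdots$. Let $H$ be the subpath induced by the last $n-3$ vertices, so $H\cong P_{n-3}$ and $x$ is its first vertex. Let $R$ be the path $v_1v_2v_3$. Then $G'=H+R$ together with the edge $v_3x$ is exactly $P_n$, and $G=G'-v_1$ is the path $v_2v_3x\cdots$, which is isomorphic to $P_{n-1}$. By the isomorphism remark, the proposition then gives $\tilde{\mu}(P_n)=\tilde{\mu}(P_{n-1})+\tilde{\mu}(P_{n-3})$.

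The only point to check is the hypothesis that $x$ is a leaf of $H$, and this is where the main obstacle lies. When $n-3\geq 2$, i.e.\ $n\geq 5$, the vertex $x$ is an endpoint of the path $H\cong P_{n-3}$ and hence a leaf, so the proposition applies verbatim.

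When $n=4$, $H=P_1$ is a single vertex of degree $0$, which is not a leaf under the paper's definition. I would handle this case directly: compute $\tilde{\mu}(P_4)$ by hand and compare it with $\tilde{\mu}(P_3)+\tilde{\mu}(P_1)=3+1=4$. Write $P_4=v_1v_2v_3v_4$; the valid 3-colorings are as follows.
\begin{itemize}
\item No white vertex: this gives the trivial PED-set, with coloring $\mathsf{y}\mathsf{b}\mathsf{b}\mathsf{y}$.
\item $v_1$ white: then $v_2$ is yellow, so $v_3$ is a non-white neighbor forced to be yellow as well (the edge $v_2v_3$ is the unique edge of $P$ at $v_2$). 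Then $v_4$ must be white, giving $\mathsf{w}\mathsf{y}\mathsf{y}\mathsf{w}$.
\item $v_1$ yellow and $v_4$ yellow with some white vertex: this gives $\mathsf{y}\mathsf{y}\mathsf{w}\ldots$, which is impossible, since a white $v_3$ forces $v_4$ yellow with no non-white neighbor. Hence the remaining option is $\mathsf{y}\mathsf{b}\mathsf{y}\mathsf{w}$, where a white $v_4$ requires $v_3$ yellow, and $v_3$ then needs a unique non-white neighbor $v_2$, which is black.
\item By symmetry, $\mathsf{w}\mathsf{y}\mathsf{b}\mathsf{y}$ is also valid.
\end{itemize}
Checking each carefully gives exactly the four colorings $\mathsf{y}\mathsf{b}\mathsf{b}\mathsf{y}$, $\mathsf{w}\mathsf{y}\mathsf{y}\mathsf{w}$, $\mathsf{y}\mathsf{b}\mathsf{y}\mathsf{w}$ and $\mathsf{w}\mathsf{y}\mathsf{b}\mathsf{y}$.

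Alternatively, the proof of the proposition can be re-run with $H=\{x\}$: the leaf hypothesis is used only to write $\tilde{\mu}(H,x,\mathsf{y})+\tilde{\mu}(H,x,\mathsf{w})=\tilde{\mu}(H)$. This identity still holds for an isolated vertex if one interprets the case in which $x$ becomes yellow in $G'$ (adjacent to white $v_3$) appropriately. Since that interpretation is delicate, the explicit enumeration is the safer route.

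Combining the base case $n=4$ with the general case $n\geq 5$ yields the recurrence for all $n\geq 4$.
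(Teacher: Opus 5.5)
Your proof is correct and is essentially the paper's argument: apply Proposition~\ref{prop: ped of a graph plus P3} with $H\cong P_{n-3}$ for $n\geq 5$, and check $n=4$ directly, precisely because $x$ is not a leaf when $H=P_1$. One small slip in your $P_4$ enumeration: when $v_1$ is white, $v_3$ is not forced to be yellow (it may be black with $v_4$ yellow, giving $\mathsf{w}\mathsf{y}\mathsf{b}\mathsf{y}$), but you recover that coloring by symmetry, so your final list of four colorings, and hence $\tilde{\mu}(P_4)=4$, is right.
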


\begin{proof}
For $n=4$, a direct check gives $\tilde{\mu}(P_4)=4$, while $\tilde{\mu}(P_3)+\tilde{\mu}(P_1)=3+1=4$. For $n\geq 5$, it suffices to take $H=P_{n-3}$ in Proposition \ref{prop: ped of a graph plus P3} and to choose $x$ as one of its leaves.
\end{proof}

Consider the graphs $H$, $G$ and $G'$ of Proposition \ref{prop: ped of a graph plus P3}. If $v_1$ is colored white, then $v_2$ must be colored yellow, and $v_3$ is colored non-white. On the one hand, if $v_3$ is colored yellow, then $x$ must be colored white. On the other hand, if $v_3$ is colored black, then $x$ is assigned a non-white color. In the latter case, recoloring $v_2$ white and $v_3$ yellow yields the following remark.

\begin{rmk}~\label{rmk: first white vertex}
$\tilde{\mu}(G', v_1, \mathsf{w})=\tilde{\mu}(G, v_2, \mathsf{w})+\tilde{\mu}(H, x, \mathsf{w})$.
\end{rmk}

\section{Extremal problems}~\label{sec: extpro}

In this section, we identify the extremal graphs on $n$ vertices among trees, forests, and chordal graphs. We first treat trees and then consider the corresponding extremal problem for forests and chordal graphs.

\subsection{Perfect edge dominating sets in trees}~\label{subsec: pedtrees}

In this subsection, we prove that any tree $T$ on $n$ vertices has at most as many PED-sets as the path $P_n$. Moreover, we show that $P_n$ is the unique extremal tree when $n\not\in\{4, 5\}$; for $n=4$ and $n=5$, the additional extremal trees are $K_{1, 3}$, and $K_{1, 4}$, respectively.

\begin{lem}~\label{lem: pedstar}
Let $n\in\mathbb{N}$. Then $\tilde{\mu}(K_{1,1})=1$, and $\tilde{\mu}(K_{1,n-1})=n$ for all $n\ge 3$. Furthermore, $\tilde{\mu}(K_{1,n-1})=\tilde{\mu}(P_n)$ for $n\in\{2,3,4,5\}$, and $\tilde{\mu}(K_{1,n-1})<\tilde{\mu}(P_n)$ for all $n\ge 6$.
\end{lem}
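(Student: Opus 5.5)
The plan is to count valid 3-colorings of the star directly, using the bijection between PED-sets and valid 3-colorings. Then the path values come from the recursion of Theorem~\ref{thm: recursive formula}, and a short induction finishes the comparison.

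\textbf{Counting colorings of the star.} The case $K_{1,1}=P_2$ was already noted: $\tilde{\mu}(P_2)=1$. So let $n\ge 3$, and let $c$ be the center of $K_{1,n-1}$ with leaves $u_1,\dots,u_{n-1}$. I split by the color of $c$.

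\begin{itemize}
\item If $c$ is white, every leaf must be yellow. A yellow leaf must be a leaf of $G-W$, but here each $u_i$ is isolated in $G-W$, so this is impossible.
\item If $c$ is black, no leaf can be white. A leaf has degree one, so it cannot be black, and therefore every leaf is yellow. This is valid because $d(c)\ge 2$ and each $u_i$ has degree one in $G-W=G$. It gives exactly one coloring, the trivial PED-set.
\item If $c$ is yellow, then $c$ has exactly one non-white neighbor $u_i$. That leaf is yellow, and all the other leaves are white, which is consistent since white vertices need all neighbors yellow. This gives $n-1$ colorings, one for each choice of $i$.
\end{itemize}

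Hence $\tilde{\mu}(K_{1,n-1})=1+(n-1)=n$.

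\textbf{Small paths.} From $\tilde{\mu}(P_1)=\tilde{\mu}(P_2)=1$, $\tilde{\mu}(P_3)=3$ and Theorem~\ref{thm: recursive formula}:
\begin{itemize}
\item $\tilde{\mu}(P_4)=3+1=4$ and $\tilde{\mu}(P_5)=4+1=5$, which settles equality for $n\in\{3,4,5\}$ (and $n=2$ by the first part).
\item $\tilde{\mu}(P_6)=5+3=8>6$ and $\tilde{\mu}(P_7)=8+4=12>7$.
\end{itemize}

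\textbf{The inequality for $n\ge 6$.} I will show $\tilde{\mu}(P_n)\ge n+1$ for all $n\ge 6$ by strong induction. The base cases $n=6,7,8$ hold, with $\tilde{\mu}(P_8)=12+5=17$. For $n\ge 9$, the recursion and the induction hypothesis give
\[
\tilde{\mu}(P_n)=\tilde{\mu}(P_{n-1})+\tilde{\mu}(P_{n-3})\ge n+(n-2)\ge n+1 .
\]
Alternatively, the recursion with positive terms shows $\tilde{\mu}(P_n)$ is nondecreasing, so $\tilde{\mu}(P_n)\ge \tilde{\mu}(P_{n-1})+1$ for $n\ge 4$, and one can run the induction from $n=6$ using that.

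The only point that needs care is the white-center case in the star count. It requires invoking condition 2 of a valid 3-coloring correctly: a yellow vertex must be a leaf of $G-W$, and an isolated vertex of $G-W$ is not a leaf. The rest is routine arithmetic.
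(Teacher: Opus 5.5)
Your proof is correct and follows the same route as the paper. You count valid 3-colorings of the star by the color of the center, and you additionally justify via condition 2 why the center cannot be white, which the paper only asserts; you then induct with the recursion of Theorem~\ref{thm: recursive formula}. Your main induction proves $\tilde{\mu}(P_n)\ge n+1$ using both terms of the recursion with base cases $n=6,7,8$, whereas the paper uses only the strict monotonicity $\tilde{\mu}(P_n)>\tilde{\mu}(P_{n-1})$, which is exactly the alternative you mention.
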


\begin{proof}
Consider a graph $K$ isomorphic to the star on $n$ vertices, with $V(K)=\{u, v_1, \ldots, v_{n-1}\}$, where $u$ is the center of $K$ and the remaining $n-1$ vertices are leaves. Note that $u$ is assigned a non-white color. If $n=2$, then it is clear that $K$ has only one PED-set. Suppose $n\ge 3$. If $u$ is colored black, then all leaves $v_1, \ldots, v_{n-1}$ are colored yellow, resulting in the trivial PED-set. If $u$ is colored yellow, then exactly one leaf is colored yellow, while the remaining leaves are colored white, yielding $n-1$ PED-sets (in particular, these PED-sets are DIMs). Therefore, $K$ has a total of $n$ PED-sets.

For $n\in\{2,3,4,5\}$, it is easy to verify that $\tilde{\mu}(K_{1,n-1})=\tilde{\mu}(P_n)$. Finally, to prove that $\tilde{\mu}(K_{1,n-1})<\tilde{\mu}(P_n)$ for all $n\ge 6$, we proceed by induction on $n$. If $n=6$, $\tilde{\mu}(P_6)=8>6=\tilde{\mu}(K_{1,5})$, so the inequality holds. Suppose that $n>6$, and that $\tilde{\mu}(K_{1,n-2})<\tilde{\mu}(P_{n-1})$. By Theorem \ref{thm: recursive formula} and the inductive hypothesis, $\tilde{\mu}(P_n)>\tilde{\mu}(P_{n-1})>\tilde{\mu}(K_{1,n-2})=n-1$, which implies $\tilde{\mu}(P_n)>\tilde{\mu}(P_{n-1}) \ge n=\tilde{\mu}(K_{1,n-1})$.
\end{proof}

Let $Q$ be a graph isomorphic to $P_n$, with $V(Q)=\{v_1, \ldots, v_n\}$, where $v_1$ and $v_n$ are leaves and the remaining $n-2$ vertices have degree two. Define $q_n:=\varphi^*_{v_1}(Q)$, and set $q_0:=1$.

\begin{lem}~\label{lem: coloreos raros}
Let $\{q_n\}_{n\geq 0}$. Then the following conditions hold:
\begin{enumerate}
     \item $q_1=2$.
     \item $q_n=\tilde{\mu}(Q)+\tilde{\mu}(Q-v_1, v_2, \mathsf{w})$ for all $n\geq 2$.
     \item $q_n=q_{n-1}+q_{n-3}$ for all $n\geq 3$.
\end{enumerate}
\end{lem}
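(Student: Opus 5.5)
Item 1 is a direct check, item 2 comes from reading off the definition of $\varphi_{v_1}(Q)$, and item 3 follows from item 2 together with Proposition~\ref{prop: ped of a graph plus P3} and Remark~\ref{rmk: first white vertex}.

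\emph{Item 1.} For $n=1$, $Q$ is a single vertex $v_1$, so $\tilde{\mu}(Q)=1$. For $\varphi_{v_1}(Q)$, we color $v_1$ yellow; it has no neighbors, and $Q-v_1$ is empty, which has exactly one (empty) valid coloring. Hence $\varphi_{v_1}(Q)=1$ and $q_1=2$. (Here I read the definition literally, even though the resulting coloring is not a valid coloring of $Q$ itself.)

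\emph{Item 2.} Let $n\ge 2$. The only neighbor of $v_1$ is $v_2$, so $\varphi_{v_1}(Q)$ counts the colorings in which $v_1$ is yellow, $v_2$ is white, and the remaining vertices form a valid coloring of $Q-v_1$. Such a coloring restricts to a valid $3$-coloring of $Q-v_1\cong P_{n-1}$ with $v_2$ white. Conversely, every valid coloring of $Q-v_1$ with $v_2$ white extends uniquely by coloring $v_1$ yellow. Therefore $\varphi_{v_1}(Q)=\tilde{\mu}(Q-v_1,v_2,\mathsf{w})$, which gives item 2.

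\emph{Item 3.} Write $w_m$ for the number of valid colorings of $P_m$ in which a fixed leaf is white, and $p_m=\tilde{\mu}(P_m)$. By item 2, $q_n=p_n+w_{n-1}$ for $n\ge2$. I will also use $q_0=1$, $q_1=2$, $q_2=p_2+w_1=2$, and $q_3=p_3+w_2=3$ (since $w_2=0$). By Theorem~\ref{thm: recursive formula}, $p_n=p_{n-1}+p_{n-3}$ for $n\ge4$. It therefore suffices to show $w_{n-1}=w_{n-2}+w_{n-4}$, which is exactly Remark~\ref{rmk: first white vertex}. To apply it, take $G'=P_{n-1}$ and $H=P_{n-4}$ with $x$ a leaf, so that $G\cong P_{n-2}$ with $v_2$ a leaf. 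This needs $n-4\ge2$, so that $x$ is a genuine leaf, i.e. $n\ge6$.

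The remaining cases $n=3,4,5$ I would check by hand:
\begin{itemize}
\item $q_3=3=q_2+q_0$;
\item $q_4=p_4+w_3=4+1=5=q_3+q_1$;
\item $q_5=p_5+w_4=5+2=7=q_4+q_2$,
\end{itemize}
where I compute $w_3=1$ and $w_4=2$ directly.

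The main obstacle is these boundary cases. Remark~\ref{rmk: first white vertex} requires $x$ to be a leaf of $H$ (for $H=P_1$ the vertex $x$ is isolated), and the convention $q_0=1$ has to be consistent with $q_1$ and $q_2$. So the small values $w_1,\dots,w_4$ must be enumerated explicitly rather than obtained from the recursion.
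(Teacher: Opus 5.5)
Your proposal is correct and follows the paper's proof. Item 2 comes from the definition of $\varphi_{v_1}(Q)$, and item 3 splits $q_n=\tilde{\mu}(P_n)+\tilde{\mu}(P_{n-1},\text{leaf},\mathsf{w})$, applying Theorem~\ref{thm: recursive formula} to the first summand and Remark~\ref{rmk: first white vertex} to the second. The only difference is the base case: the paper starts the general step at $n=5$, where the relevant $H$ is a single isolated vertex rather than a graph with a leaf (the identity still happens to hold there), whereas you verify $n=5$ by hand so the remark is used only within its stated hypotheses.
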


\begin{proof}
If $n=1$, then $V(Q)=\{v_1\}$ and $E(Q)=\emptyset$. Thus, $v_1$ can be colored white or yellow. Hence, $q_1=2$, and Statement 1 follows.

Statement 2 follows from the definition of the parameter $\varphi^*_v(G)$ when $G=Q$.

Finally, we prove Statement 3. We know that $q_0=1$ and $q_1=2$. Using Statement 2, $q_2=2$, $q_3=3$, and $q_4=5$, and hence $q_3=q_2+q_0$ and $q_4=q_3+q_1$. Assume $n>4$, and consider $G'=Q$, $G=G[\{v_2, v_3, \ldots, v_n\}]$, $H=G[\{v_4, \ldots, v_n\}]$, and $x=v_4$. By Statement 2, we have
$$q_{n-1}=\tilde{\mu}(G)+\tilde{\mu}(G-v_2, v_3, \mathsf{w}),\,\,\text{and}$$
$$q_{n-3}=\tilde{\mu}(H)+\tilde{\mu}(H-x, v_5, \mathsf{w}).$$
By applying Theorem \ref{thm: recursive formula}, we obtain $\tilde{\mu}(G)+\tilde{\mu}(H)=\tilde{\mu}(G')$, and by Remark \ref{rmk: first white vertex}, we have $\tilde{\mu}(G-v_2, v_3, \mathsf{w})+\tilde{\mu}(H-x, v_5, \mathsf{w})=\tilde{\mu}(G'-v_1, v_2, \mathsf{w})$. Consequently, $q_{n-1}+q_{n-3}=q_n$.
\end{proof}

Now, to achieve the goal of this subsection, we present two transformations that increase the number of PED-sets. The proofs of the following two propositions appear in Subsection \ref{subsec: proofsprop}.

\begin{prop}~\label{prop: pairs k and l}
Let $H$ be a connected graph, and let $v_0$ be a vertex of $H$ with $d_H(v_0)\geq 1$. Consider two graphs, $R$ and $S$, isomorphic to the paths $P_k$ and $P_l$, respectively, whose vertex sets are $V(R)=\{v_1, \ldots, v_k\}$ and $V(S)=\{u_1, \ldots, u_l\}$, where $k\leq l$. Define $G$ as the graph obtained from $H + R + S$ by adding the edges $v_0 v_1$ and $v_0 u_1$, and define $G'$ as the graph obtained from $H + R + S$ by adding the edges $v_0 v_1$ and $v_k u_1$, as shown in Figure \ref{Fig.: transformation one}. Then $\tilde{\mu}(G)<\tilde{\mu}(G')$, except when \textcolor{black}{$(k,l)\in\{(1,1), (1,2), (2,2), (2, 5)\}$. Moreover, when $(k,l)\in\{(1,2), (2,5)\}$, we have $\tilde{\mu}(G)\leq \tilde{\mu}(G')$.}
\end{prop}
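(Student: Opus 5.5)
We plan to compute both $\tilde{\mu}(G)$ and $\tilde{\mu}(G')$ by conditioning on the local state at $v_0$. In both graphs, $G$ and $G'$ coincide on $H$ and differ only in what hangs off $v_0$. In $G$ there are two pendant paths, $P_k$ and $P_l$, both attached at $v_0$. In $G'$ there is a single pendant path on $k+l$ vertices attached at $v_0$ through $v_1$. Any valid 3-coloring restricted to $V(H)$ is determined up to what $v_0$ ``needs'' from the outside. So we classify colorings of $H$ by the color of $v_0$ and by the number $t\in\{0,1,\ge 2\}$ of non-white neighbours of $v_0$ inside $H$. This gives a fixed finite family of nonnegative parameters of $H$: $h_{\mathsf{w}}$, $h_{0}$, $h_{1}$, $h_{2}$, where, for instance, $h_t$ counts colorings of $H$ valid away from $v_0$ with $v_0$ non-white and exactly $t$ (or at least two) non-white $H$-neighbours. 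Each global coloring of $G$ or $G'$ then factors as an $H$-part times independent pendant-path parts, and the total count is obtained by summing these products, with the black/yellow condition at $v_0$ deciding which combinations are allowed.

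Next, we express the pendant-path factors through the sequences already available, namely $p_n:=\tilde{\mu}(P_n)$ and $q_n$ from Lemma~\ref{lem: coloreos raros}. There are three relevant cases for a pendant path $P_m$ attached at $v_1$:
\begin{itemize}
\item $v_0$ is white. Then $v_1$ must be yellow with its partner inside the path, so the factor is $\tilde{\mu}(P_m,v_1,\mathsf{y})$.
\item $v_0$ is non-white and $v_1$ is white. The factor is $\tilde{\mu}(P_m,v_1,\mathsf{w})$.
\item $v_0$ is non-white and $v_1$ is non-white. Then $v_0v_1$ is in the PED-set, and the factor is either $\varphi_{v_1}(P_m)$ (when $v_1$ is yellow) or the count with $v_1$ black.
\end{itemize}
Using Proposition~\ref{prop: ped of a graph plus P3}, Remark~\ref{rmk: first white vertex} and Lemma~\ref{lem: coloreos raros}, each of these factors is a fixed linear combination of $p_{m-j}$ and $q_{m-j}$ for small shifts $j$. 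Plugging these in, we get
\[
\tilde{\mu}(G')-\tilde{\mu}(G)=\sum_{s} c_s(k,l)\,h_s ,
\]
where each coefficient $c_s(k,l)$ is a polynomial expression in $p$'s and $q$'s. Typical terms are $p_{k+l}-p_kp_l$, or $q_{k+l-1}-q_{k-1}p_l$.

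The core of the proof is then showing that every $c_s(k,l)\ge 0$, and that at least one coefficient is strictly positive on a parameter that is guaranteed to be positive. For positivity of the parameter: since $d_H(v_0)\ge1$, the trivial coloring of $H$ has $v_0$ non-white with a non-white $H$-neighbour, so the corresponding $h_s\ge1$. To prove the coefficient inequalities, we plan to establish convolution-type identities for the recurrence $a_n=a_{n-1}+a_{n-3}$, in the spirit of Fibonacci addition formulas. An example is
\[
p_{k+l}=p_kq_l+(\text{lower terms}),
\]
proved by fixing $k$ and inducing on $l$ via Theorem~\ref{thm: recursive formula} and Lemma~\ref{lem: coloreos raros}\,(3). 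From such identities, each $c_s(k,l)$ becomes a sum of manifestly nonnegative products once $k$ and $l$ exceed small thresholds.

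The main obstacle is the small cases. For small $k$ and $l$ the growth of the recurrence has not yet kicked in, because $p_1=p_2=1$, $p_3=3$, $p_4=4$, $p_5=6$. These are exactly where the exceptions $(1,1)$, $(1,2)$, $(2,2)$, $(2,5)$ arise. We would first settle $k\in\{1,2\}$ and small $l$ by explicitly tabulating the coefficients $c_s(k,l)$. This tabulation should identify that some $c_s$ is negative for $(1,1)$ and $(2,2)$, so strict inequality fails there. It should also show that for $(1,2)$ and $(2,5)$ all $c_s$ are $\ge0$ but the one multiplying the guaranteed-positive parameter vanishes, giving only $\tilde{\mu}(G)\le\tilde{\mu}(G')$. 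After that, we would run the induction on $l$, and then on $k$, for all remaining pairs, using the recurrence to propagate strict positivity from the base cases.
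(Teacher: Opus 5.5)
Your plan is essentially the paper's proof. Your $h_{\mathsf{w}},h_2,h_1,h_0$ are exactly $\tilde{\mu}(H,v_0,\mathsf{w})$, $\tilde{\mu}(H,v_0,\mathsf{b})$, $\tilde{\mu}(H,v_0,\mathsf{y})$, $\varphi_{v_0}(H)$; the three pendant-path factors turn out to be simply $q_{m-2}$, $q_{m-3}$, $q_{m-1}$ (with $q_{-1}=0$, $q_{-2}=1$, so no $p_n$ is needed); the resulting coefficientwise inequalities (Claims 1--6 there) are proved by recurrence inductions from small base cases; and the trivial PED-set of $H$ supplies the strictly positive parameter.

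When you carry it out, note that the coefficient of $h_1$ is not a function of $(k,l)$ alone. If $d_H(v_0)=1$, then $v_0$ may also be black, giving $q_{k-1}q_{l-1}+q_{k-3}q_{l-3}$ in $G$ versus $q_{k+l}$ in $G'$; this is why the paper treats $d_H(v_0)\ge 2$ and $d_H(v_0)=1$ separately. Also, $p_5=5$ (not $6$), and you need not show that the exceptional pairs fail (a negative coefficient alone would not prove that anyway).
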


\begin{figure}
     \centering
     \begin{tikzpicture}[b/.style={circle, scale=0.5, draw=black, fill}, line width=0.5pt]
          \draw[fill=cyan!30] (0,0) circle (1);
          \node at (0,0) {$H$};
          \node (0) [b] at (0,-1){};
          \node[above] at (0,-0.9) {\footnotesize $v_0$};
          \node (1) [b] at (-1,-2){};
          \node[above] at (-1,-2+0.1) {\footnotesize $v_1$};
          \node (2) [b] at (-1-1,-2-1){};
          \node[above] at (-1-1,-2-1+0.1) {\footnotesize $v_{k-1}$};
          \node (3) [b] at (-1-2,-2-2){};
          \node[above] at (-1-2,-2-2+0.1) {\footnotesize $v_k$};
          \node (4) [b] at (1,-2){};
          \node[above] at (1,-2+0.1) {\footnotesize $u_1$};
          \node (5) [b] at (1+1,-2-1){};
          \node[above] at (1+1,-2-1+0.1) {\footnotesize $u_{l-1}$};
          \node (6) [b] at (1+2,-2-2){};
          \node[above] at (1+2,-2-2+0.1) {\footnotesize $u_l$};
          \draw (4) -- (0) -- (1) (2) -- (3) (5) -- (6);
          \node at (-1.5,-2.5) {$\iddots$};
          \node at (1.5,-2.5) {$\ddots$};
          \node at (-4,-1) {$G$};
          
          \draw[fill=cyan!30] (0+6,0) circle (1);
          \node at (6,0) {$H$};
          \node (0) [b] at (0+6,-1){};
          \node[above] at (0+6,-0.9) {\footnotesize $v_0$};
          \node (1) [b] at (6,-2){};
          \node[left] at (6-0.1,-2) {\footnotesize $v_1$};
          \node (2) [b] at (6,-3){};
          \node[left] at (6-0.1,-3) {\footnotesize $v_{k-1}$};
          \node (3) [b] at (6,-4){};
          \node[left] at (6-0.1,-4) {\footnotesize $v_k$};
          \node (4) [b] at (6,-5){};
          \node[left] at (6-0.1,-5) {\footnotesize $u_1$};
          \node (5) [b] at (6,-6){};
          \node[left] at (6-0.1,-6) {\footnotesize $u_{l-1}$};
          \node (6) [b] at (6,-7){};
          \node[left] at (6-0.1,-7) {\footnotesize $u_l$};
          \draw (0) -- (1) (2) -- (3) -- (4) (5) -- (6);
          \node at (6,-2.5) {$\vdots$};
          \node at (6,-5.5) {$\vdots$};
          \node at (8,-1) {$G'$};
     \end{tikzpicture}
     \caption{Transformation proposed in Proposition \ref{prop: pairs k and l}.}
     \label{Fig.: transformation one}
\end{figure}

Note that if $d_H(v_0)=0$ in Proposition \ref{prop: pairs k and l}, then the graphs $G$ and $G'$ are isomorphic, and the transformation is useless.

\begin{prop}~\label{prop: s-k}
Let $H$ be a connected graph and let $v_0$ be a leaf of $H$. Let $L$ be a graph isomorphic to $s P_1$ with vertex set $V(L)=\{l_1,\ldots,l_s\}$, and let $M$ be a graph isomorphic to $k P_2$ with vertex set $V(M)=\{u_1,v_1,\ldots,u_k,v_k\}$ and edge set $E(M)=\{u_1 v_1,\ldots,u_k v_k\}$, where $2k+s\geq 3$. Define $G$ as the graph obtained from $H + L + M$ by adding the edges $v_0 l_i$ for all $1\leq i\leq s$, and $v_0 u_j$ for all $1\leq j\leq k$, and define $G'$ as the graph obtained from $H + L + M$ by adding the edges $v_0 l_1, l_1 l_2, \ldots, l_{s-1} l_s, l_s u_1, v_1 u_2, v_2 u_3, \ldots, v_{k-1} u_k$ if $s\geq 1$, and by adding the edges $v_0 u_1, v_1 u_2, v_2 u_3, \ldots, v_{k-1} u_k$ if $s=0$, as shown in Figure \ref{Fig.: transformation two}. Then $\tilde{\mu}(G)<\tilde{\mu}(G')$, except when $(k, s)\in\{(0,3),(0,4)\}$.
\end{prop}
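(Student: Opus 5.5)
We prove the statement by expressing $\tilde{\mu}(G)$ and $\tilde{\mu}(G')$ as the same linear combination of a few quantities depending only on $H$, with coefficients depending only on the attached gadget, and then comparing the coefficients. Let $w$ be the unique neighbour of $v_0$ in $H$, and put $m:=s+2k$. In $G'$ the attached part is a path on $m$ vertices hanging from $v_0$; in $G$ it is $s$ pendant leaves together with $k$ pendant copies of $P_2$. In both graphs, a valid 3-coloring restricted to $H-v_0$ interacts with the new part only through $v_0$, and $v_0$ interacts with $H$ only through $w$.

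\textbf{Step 1: decomposition.} I would classify valid colorings according to the colour of $v_0$ and, when $v_0$ is yellow, according to where its unique non-white neighbour lies. Define
\[
\alpha:=\tilde{\mu}(H,v_0,\mathsf{w}),\qquad \beta:=\tilde{\mu}(H,v_0,\mathsf{y}),
\]
and let $\delta$ be the number of valid colorings of $H-v_0$ in which $w$ is white. The cases give the following.
\begin{itemize}
\item If $v_0$ is white, the part in $H$ contributes $\alpha$.
\item If $v_0$ is black, or $v_0$ is yellow with partner $w$, the vertex $w$ sees $v_0$ as a non-white neighbour exactly as in $H$ with $v_0$ yellow, so the contribution is $\beta$.
\item If $v_0$ is yellow with its partner in the new part, then $w$ is white and the contribution is $\delta$.
\end{itemize}
Hence, for $X\in\{G,G'\}$,
\[
\tilde{\mu}(X)=\alpha A_X+\beta B_X+\delta C_X,
\]
where $A_X$, $B_X$, $C_X$ count the colorings of the attached part compatible with each state of $v_0$. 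Moreover $\beta\ge 1$, since the trivial PED-set of $H$ colours the leaf $v_0$ yellow, and $\alpha,\delta\ge 0$. So it suffices to show
\[
A_{G'}\ge A_G,\qquad C_{G'}\ge C_G,\qquad B_{G'}>B_G.
\]

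\textbf{Step 2: coefficients for $G$.} These are computed by hand.
\begin{itemize}
\item $v_0$ white: every $l_i$ would be a yellow leaf with no non-white neighbour, so $A_G=[s=0]$.
\item $v_0$ yellow with partner $w$: all $l_i$ and $u_j$ are white, which strands each $v_j$. This contributes $[k=0]$.
\item $v_0$ black: each $l_i$ is yellow and each pair $u_jv_j$ is either (yellow, white) or (black, yellow). This contributes $2^k$.
\end{itemize}
Therefore $B_G=[k=0]+2^k$. When $v_0$ is yellow with a new partner, a partner $l_i$ forces $k=0$, and a partner $u_j$ forces $k=1$, so $C_G=s[k=0]+[k=1]$.

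\textbf{Step 3: coefficients for $G'$.} These are read off from path data on $P_m$ with end vertex $p_1$ adjacent to $v_0$.
\begin{itemize}
\item $A_{G'}=\tilde{\mu}(P_m,p_1,\mathsf{y})$, since $p_1$ is yellow and its partner lies inside the path.
\item $B_{G'}$ is the number of colorings where $p_1$ is white, plus those where $p_1$ is yellow with partner $v_0$ (so $p_2$ white), plus those where $v_0$ is black (so $p_1$ is non-white and sees a non-white neighbour outside). Up to the shift noted above, these are exactly $\tilde{\mu}(P_m)$ and $\varphi_{p_1}$-type quantities, so $B_{G'}$ is expressible through $q_m$ and $\tilde{\mu}(P_m)$ as in Lemma~\ref{lem: coloreos raros}.
\item $C_{G'}$ counts colorings where $p_1$ is yellow with partner $v_0$, which is again a $\varphi$-type count.
\end{itemize}
All three quantities satisfy the recurrence $x_n=x_{n-1}+x_{n-3}$, by Theorem~\ref{thm: recursive formula} and Lemma~\ref{lem: coloreos raros}. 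They therefore grow like $\rho^{m}$ with $\rho\approx1.4656$.

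\textbf{Step 4: comparison, and the main obstacle.} Since $\rho^2>2$, the bound $B_{G'}>2^k+[k=0]$ holds for all large $m$ by induction along the recurrence. The two remaining inequalities are easy: $A_G\le1$ and $C_G\le s$, while $A_{G'}$ and $C_{G'}$ are increasing in $m$. The delicate part is the finite range of small $(k,s)$ with $2k+s\ge3$, which I would settle by tabulating the first few values of $\tilde{\mu}(P_n)$ and $q_n$.

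In that range one must check that strictness fails exactly for $(k,s)\in\{(0,3),(0,4)\}$. In those two cases $G$ is $H$ with a star attached at $v_0$, mirroring the equality $\tilde{\mu}(K_{1,n-1})=\tilde{\mu}(P_n)$ for $n\le5$ in Lemma~\ref{lem: pedstar}. There the inequality $B_{G'}>B_G$ should degenerate into equality together with the $A$ and $C$ terms. Ensuring that the bookkeeping for $B_{G'}$ (the black-versus-yellow state of $v_0$ as seen from the path) is exactly right is the step I expect to be error-prone, and I would verify it against small explicit examples such as $H=P_2$.
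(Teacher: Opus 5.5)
Your reduction is the same as the paper's. With $\alpha=\tilde{\mu}(H,v_0,\mathsf{w})$, $\beta=\tilde{\mu}(H,v_0,\mathsf{y})$ and $\delta=\varphi_{v_0}(H)$, both counts have the form $\alpha A_X+\beta B_X+\delta C_X$, and $\beta\ge 1$ reduces everything to $A_{G'}\ge A_G$, $B_{G'}>B_G$, $C_{G'}\ge C_G$. The gap is that several coefficients you would compare are wrong, and the comparison itself is never carried out.

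First, $C_G$ is miscounted for $k=1$. When $v_0$ is yellow with partner $u_1$, the pair $(u_1,v_1)$ can be (yellow, white) or (black, yellow), exactly as in your black case. So $C_G=s[k=0]+2[k=1]$, which is the paper's $z_1=2$.

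Second, in $G'$ (path $p_1\cdots p_m$, $m=2k+s$), when $v_0$ is yellow with partner $p_1$, the vertex $p_1$ may be yellow with $p_2$ white \emph{or} black with $p_2$ non-white. You keep only the first case, which undercounts $C_{G'}$. As written, your list for $B_{G'}$ also counts ``$p_1$ yellow with partner $v_0$'' twice.

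The clean route is the paper's observation: if $p_j$ is non-white and already sees a non-white neighbour on the $v_0$ side, then $p_j,\dots,p_m$ can be completed in exactly $q_{m-j}$ ways. This gives $A_{G'}=q_{m-2}$ and $C_{G'}=q_{m-1}$. It also gives $B_{G'}=q_{m-1}+q_{m-3}=q_m$, where $q_{m-3}$ comes from $v_0$ yellow with partner $w$, forcing $p_1$ white and $p_2$ yellow.

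With the correct coefficients the proof is three explicit inequalities, and your prediction of where it fails is off.
\begin{enumerate}
\item For $A$: $q_{m-2}\ge q_1=2>[s=0]$.
\item For $B$: $q_{2k+s}>2^k+[k=0]$ for all $k$ needs an actual induction; the remark $\rho^2>2$ is only asymptotic. For instance, $q_n=q_{n-2}+2q_{n-4}+q_{n-6}$ together with $q_{2j+s}\ge 2^j$ for $j<k$ gives $q_{2k+s}\ge 2^k+2^{k-3}$.
\item For $C$: this is where the exceptions live, not in $B$. For $k=0$ you need $q_{s-1}\ge s$. This holds for $s\ge 5$, since $q_4=5$ and $q_n\ge q_{n-1}+1$, but fails for $s=3,4$ ($q_2=2<3$, $q_3=3<4$). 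Meanwhile $B_{G'}=q_s\in\{3,5\}$ is strictly larger than $B_G=2$ there. For $k=1$ you need $q_{s+1}\ge 2$, using $s\ge 1$; this is tight at $s=1$ precisely because $C_G=2$. For $k\ge 2$, $C_G=0$.
\end{enumerate}
Note that ``$C_G\le s$ and $C_{G'}$ increasing in $m$'' does not by itself give $C_{G'}\ge C_G$.

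If you tabulated with your $C_{G'}$, you would get the number of valid colorings of $P_{m-1}$ with an end vertex white, i.e.\ $q_{m-4}$, and $0$ for $m=3$. The $C$-comparison would then fail for $k=0$ and every $3\le s\le 9$, and also at $(k,s)=(1,1)$. So the argument as set up would not yield the stated exceptional set $\{(0,3),(0,4)\}$.
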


\begin{figure}
     \centering
     \begin{tikzpicture}[b/.style={circle, scale=0.5, draw=black, fill}, line width=0.5pt]
          \draw[fill=darkgreen!30] (0,0) circle (1);
          \node at (0,0) {$H-v_0$};
          \node (7) [b] at (0,-1){};
          \node (0) [b] at (0,-2){};
          \node[right] at (0.1,-2) {\footnotesize $v_0$};
          \node (1) [b] at (-3,-3){};
          \node[below] at (-3,-3-0.1) {\footnotesize $l_1$};
          \node (2) [b] at (-1,-3){};
          \node[below] at (-1,-3-0.1) {\footnotesize $l_s$};
          \node (3) [b] at (1,-3){};
          \node[left] at (1-0.1,-3) {\footnotesize $u_1$};
          \node (4) [b] at (3,-3){};
          \node[left] at (3-0.1,-3) {\footnotesize $u_k$};
          \node (5) [b] at (1,-4){};
          \node[left] at (1-0.1,-4) {\footnotesize $v_1$};
          \node (6) [b] at (3,-4){};
          \node[left] at (3-0.1,-4) {\footnotesize $v_k$};
          \draw (7) -- (0) -- (1) (0) -- (2) (5) -- (3) -- (0) -- (4) -- (6);
          \node at (-2,-3) {$\cdots$};
          \node at (2,-3) {$\cdots$};
          \node at (2,-4) {$\cdots$};
          \node at (-4,-1) {$G$};
          
          \draw[fill=darkgreen!30] (6,0) circle (1);
          \node at (6,0) {$H-v_0$};
          \node (7) [b] at (6,-1){};
          \node (0) [b] at (6,-2){};
          \node[right] at (6.1,-2) {\footnotesize $v_0$};
          \node (1) [b] at (6,-3){};
          \node[right] at (6.1,-3) {\footnotesize $l_1$};
          \node (2) [b] at (6,-4){};
          \node[right] at (6.1,-4) {\footnotesize $l_s$};
          \node (3) [b] at (6,-5){};
          \node[right] at (6.1,-5) {\footnotesize $u_1$};
          \node (4) [b] at (6,-6){};
          \node[right] at (6.1,-6) {\footnotesize $u_k$};
          \node (5) [b] at (6,-7){};
          \node[right] at (6.1,-7) {\footnotesize $v_k$};
          \draw (7) -- (0) -- (1) (2) -- (3) (4) -- (5);
          \node at (6,-3.5) {$\vdots$};
          \node at (6,-5.5) {$\vdots$};
          \node at (8,-1) {$G'$};
     \end{tikzpicture}
     \caption{Transformation proposed in Proposition \ref{prop: s-k}.}
     \label{Fig.: transformation two}
\end{figure}

Proposition \ref{prop: s-k} is stated in its full generality, although the case $k=0$ will not be used, since it is already covered in the proof of Theorem \ref{thm: pedtree}.

\begin{rmk}~\label{rmk: vertex util}
Given a tree $T$ on $n\geq 3$ vertices, there exists a vertex $x$ of it that is not a leaf such that the connected components of $T-x$ are paths except possibly one of them.
\end{rmk}

\begin{theorem}~\label{thm: pedtree}
If $T$ is a tree on $n$ vertices, then $\tilde{\mu}(T) \leq \tilde{\mu}(P_n)$. Furthermore, $\tilde{\mu}(T)=\tilde{\mu}(P_n)$ if and only if
\begin{itemize}
     \item $T\in\{K_{1,3}, P_4\}$ if $n=4$,
     \item $T\in\{K_{1,4}, P_5\}$ if $n=5$, or
     \item $T=P_n$ otherwise.
\end{itemize}
\end{theorem}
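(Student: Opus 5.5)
We argue by strong induction on $n$, proving the inequality and the characterization of equality together. For $n\le 3$ the only trees are paths, so there is nothing to prove. For $n=4$ the trees are $P_4$ and $K_{1,3}$, and for $n=5$ they are $P_5$, $K_{1,4}$ and the spider obtained from $K_{1,3}$ by subdividing one edge. Lemma~\ref{lem: pedstar} gives the star values, and a direct count handles the spider: it has strictly fewer PED-sets than $P_5$, which has $6$. So from now on we take $n\ge 6$, or more generally a tree $T$ that is neither a path nor a star.

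Choose a vertex $x$ as in Remark~\ref{rmk: vertex util}. The components of $T-x$ are paths $R_1,\dots,R_m$, each attached to $x$ through one of its endpoints, together with possibly one further component $H'$. If $T$ is not a path, then either $x$ has at least two pendant paths while $x$ still has degree at least $1$ in the rest, or $H'$ is absent and $x$ is the center of a spider with at least three legs.

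\emph{Case A: two of the pendant paths have lengths $k\le l$ with $(k,l)\notin\{(1,1),(1,2),(2,2),(2,5)\}$, and the rest $H$ (which contains $x=v_0$) satisfies $d_H(v_0)\ge1$.} Proposition~\ref{prop: pairs k and l} concatenates the two paths and gives a tree $T'$ on $n$ vertices with $\tilde{\mu}(T)<\tilde{\mu}(T')\le\tilde{\mu}(P_n)$, the last step by induction applied to the number of leaves, or simply by repeating the transformation.

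\emph{Case B: every pair of pendant paths at $x$ falls into the exceptional pairs.} Then all pendant paths have length at most $2$, except possibly a single path of length $5$ paired only with paths of length $2$. If the pendant paths have lengths $1$ and $2$ only, say $s$ of length $1$ and $k$ of length $2$, we try to use Proposition~\ref{prop: s-k}. This requires that $x$ has exactly one further neighbor, i.e.\ that $x$ is a leaf of $H=T-(L\cup M)$. When $2k+s\ge3$ and $(k,s)\notin\{(0,3),(0,4)\}$, the proposition yields a strict increase. The configurations not covered by either proposition are the following: a star (where $x$ has no other neighbor); $x$ having several non-path neighbors (excluded by the choice of $x$); and the small exceptional cases $(k,s)\in\{(0,3),(0,4)\}$, the $(2,5)$ pairs, and the case where $H'$ is absent.

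For these remaining configurations we choose a different pair of paths to merge or apply Proposition~\ref{prop: pairs k and l} in its non-strict form. If $H'$ is absent, $T$ is a spider: we pick $v_0=x$ and let $H$ consist of $x$ together with one leg, so that $d_H(v_0)\ge1$ and some other pair of legs can be merged. The exceptional spiders that remain, such as those whose legs are all of length at most $2$ with few legs, are finitely many types. We handle them by combining merges, first a non-strict merge and then a strict one, or by a direct count using Theorem~\ref{thm: recursive formula} and Lemma~\ref{lem: pedstar}.

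The main obstacle is this bookkeeping of exceptional cases. We must show that every non-path tree with $n\ge6$ admits at least one strictly increasing transformation, or a chain of non-decreasing transformations ending in a strict one. We must also show that for $n\in\{4,5\}$ the only equality cases are the stars. The strictness in the generic case then gives uniqueness of $P_n$ for $n\ge6$.
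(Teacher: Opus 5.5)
\textbf{There is a genuine gap, and it lies in the configurations you defer to ``bookkeeping.''} Suppose every pendant path at $x$ is a single leaf, say $l_1,\dots,l_s$, and there is a non-path component $H'$. For $s\in\{2,3,4\}$ neither proposition applies:
\begin{itemize}
\item every pair of pendant paths is $(1,1)$, which Proposition~\ref{prop: pairs k and l} excludes and for which it has no non-strict version;
\item Proposition~\ref{prop: s-k} with $H=T[\{x\}\cup V(H')]$ has $k=0$, so either $2k+s=2<3$ (a case you do not even list) or $(k,s)\in\{(0,3),(0,4)\}$.
\end{itemize}
There is no other pair to merge. Since $H'$ is arbitrary, this is an infinite family, so a direct count via Theorem~\ref{thm: recursive formula} and Lemma~\ref{lem: pedstar} is not available either.

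Moving to another vertex does not help. Take a caterpillar in which every spine vertex carries exactly two pendant leaves. At each end of the spine you are in the $(0,2)$ situation. At interior spine vertices, $v_0$ cannot be a leaf of any admissible $H$, and the only pendant pairs are $(1,1)$. So no vertex admits either transformation. Your central claim, that every non-path tree with $n\ge 6$ admits a strict transformation or a non-decreasing chain ending in one, therefore cannot be proved with the two propositions alone.

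\textbf{The missing idea is to use the induction on $n$, which you set up but never invoke.} Let $v_0$ have leaf neighbours $l_1,\dots,l_s$ with $s\ge 2$ and a unique non-leaf neighbour $y_0$, and put $T'=T-\{l_1,\dots,l_s,v_0\}$. Conditioning on the colour of $l_1$ gives
\[
\tilde{\mu}(T)=\tilde{\mu}(T-l_1,v_0,\mathsf{y})+\tilde{\mu}(T-l_1,v_0,\mathsf{b})+\tilde{\mu}(T',y_0,\mathsf{w}).
\]
The trivial PED-set of $T'$ gives $y_0$ a non-white colour, so $\tilde{\mu}(T',y_0,\mathsf{w})<\tilde{\mu}(T')$. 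Since $|V(T')|\le n-3$, the induction hypothesis then yields $\tilde{\mu}(T)<\tilde{\mu}(P_{n-1})+\tilde{\mu}(P_{n-3})=\tilde{\mu}(P_n)$.

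This is how the paper disposes of the case. The two transformations, together with choosing an extremal tree with the fewest leaves to handle the non-strict $(2,5)$ merge, are used only to reduce to it.

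A smaller slip: $\tilde{\mu}(P_5)=\tilde{\mu}(P_4)+\tilde{\mu}(P_2)=5$, not $6$. That is exactly why $K_{1,4}$ ties with $P_5$; the five-vertex spider has $4$.
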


\begin{proof}
Let $T_0$ be an extremal tree on $n$ vertices. It suffices to prove that $T_0$ is isomorphic to the path on $n$ vertices, or to the stars $K_{1,3}$ and $K_{1,4}$ when $n=4$ and $n=5$, respectively.

If $T_0$ is isomorphic to $P_n$, then the theorem holds.

Suppose that $T_0$ is not a path. Then $T_0$ contains a vertex of degree at least three. If $T_0$ is a star, then, by Lemma \ref{lem: pedstar}, $T_0 \in \{K_{1, 3}, K_{1, 4}\}$ if $n\in\{4, 5\}$, and a star on at least six vertices is not an extremal tree. Therefore, we can suppose that $T_0$ is a tree on $n\geq 5$ vertices that is not a star. Among all extremal trees satisfying these assumptions, choose $T_0$ with the minimum possible number of leaves.

Let $v_0$ be a vertex of $T_0$ that satisfies Remark \ref{rmk: vertex util}. Since $T_0$ is not a path, we can assume that $d_{T_0}(v_0)\geq 3$. By the choice of $v_0$, all connected components of $T_0-v_0$ are paths except possibly one. We first rule out the case in which every connected component of $T_0-v_0$ is a path.

Suppose first that every connected component of $T_0-v_0$ is a path. Then there must exist at least one of them of length at least two, since $T_0$ is not a star. If one of these paths has length at least three, then we argue as follows. If there are two path-components whose lengths $k\leq l$ satisfy
\[
(k,l)\notin\{(1,1),(1,2),(2,2),(2,5)\},
\]
then Proposition \ref{prop: pairs k and l} implies the existence of another tree on $n$ vertices with a number of PED-sets greater than $\tilde{\mu}(T_0)$, contradicting the definition of $T_0$.

It remains to consider the only case not covered by this argument. In that case, one connected component of $T_0-v_0$ is a path of length five, at least one connected component of $T_0-v_0$ is a path of length two, and all the remaining connected components of $T_0-v_0$ are paths of length at most two. Let $H$ be the subgraph induced by $v_0$ together with the vertices of the path of length five. Then $v_0$ is a leaf of $H$. The remaining components form $sP_1+rP_2$, with $r\geq 1$ and $2r+s\geq 3$. Hence, by Proposition \ref{prop: s-k}, there would exist another tree $T'$ on $n$ vertices with $\tilde{\mu}(T')>\tilde{\mu}(T_0)$, again a contradiction.

Thus, all path-components of $T_0-v_0$ have length at most two. If at least one of them has length two, choose one path-component to keep together with $v_0$ so that the remaining path-components contain at least three vertices. Let $H$ be the subgraph induced by $v_0$ together with the vertices of the chosen path-component. Then $v_0$ is a leaf of $H$, and the remaining components form $sP_1+rP_2$ with $2r+s\geq 3$. Hence, by Proposition \ref{prop: s-k}, there would exist another tree $T'$ on $n$ vertices with $\tilde{\mu}(T')>\tilde{\mu}(T_0)$, again a contradiction. Therefore, all connected components of $T_0-v_0$ would have length one, implying that $T_0$ is a star, a contradiction. This proves that the case in which every connected component of $T_0-v_0$ is a path is impossible.

Since, by the choice of $v_0$, there is at most one connected component of $T_0-v_0$ that is not a path, there must exist exactly one such component. %We now prove that all the remaining components have length one.

Suppose, to the contrary, that some path-component of $T_0-v_0$ has length at least two. If there are two path-components whose lengths $k\leq l$ satisfy
\[
(k,l)\notin\{(1,1),(1,2),(2,2),(2,5)\},
\]
then Proposition \ref{prop: pairs k and l} implies the existence of another tree on $n$ vertices with a number of PED-sets greater than $\tilde{\mu}(T_0)$, contradicting the definition of $T_0$.

It remains to consider the case in which the only obstruction to applying the preceding argument is the pair $(2,5)$. In this case, the transformation of Proposition \ref{prop: pairs k and l} gives a tree $T'$ on $n$ vertices such that
\[
\tilde{\mu}(T')\geq \tilde{\mu}(T_0).
\]
If the inequality is strict, we again obtain a contradiction. If equality holds, then $T'$ is also extremal. Moreover, the unique non-path component of $T_0-v_0$ remains unchanged, and the two path-components of lengths two and five are replaced by a single path-component. Hence $T'$ is neither a path nor a star, and it has fewer leaves than $T_0$, contradicting the choice of $T_0$. Therefore, no path-component of $T_0-v_0$ has length at least three.

Thus, all path-components of $T_0-v_0$ have length at most two. If one of them has length two, let $H$ be the subgraph induced by $v_0$ together with the vertices of the unique connected component of $T_0-v_0$ that is not a path. Then $v_0$ is a leaf of $H$. The remaining components form $sP_1+rP_2$, with $r\geq 1$ and $2r+s\geq 3$. Hence, by Proposition \ref{prop: s-k}, there would exist another tree $T'$ on $n$ vertices with $\tilde{\mu}(T')>\tilde{\mu}(T_0)$, again a contradiction. Therefore, all the remaining components have length one, that is, they are leaves of $T_0$.

Let $L=\{l_1, l_2, \ldots, l_s\}\subseteq V(T_0)$ be the set of leaves adjacent to $v_0$, with $s\geq 2$, and let $y_0$ be the unique neighbor of $v_0$ that is not a leaf of $T_0$. We claim that $\tilde{\mu}(T_0)<\tilde{\mu}(P_n)$ for all $n\geq 5$. We proceed by induction on $n$.

If $n=5$, then $V(T_0)={l_1, l_2, v_0, y_0, z_0}$, where $z_0$ is a leaf adjacent to $y_0$. If we consider the subgraph $H=T_0[{v_0, l_1}]$, then $v_0$ is a leaf in $H$, and is adjacent to the paths induced by $l_2$ and ${y_0, z_0}$. By Proposition \ref{prop: s-k}, $\tilde{\mu}(T_0)<\tilde{\mu}(P_5)$.

Now, suppose that $n>5$, and that the theorem holds for every tree on $i<n$ vertices. Since $l_1$ is a leaf of $T_0$, it can be colored white or yellow. If $l_1$ is colored white, then $v_0$ must be colored yellow, and thus, $\tilde{\mu}(T_0, l_1, \mathsf{w}) = \tilde{\mu}(T_0-l_1, v_0, \mathsf{y})$. On the other hand, if $l_1$ is colored yellow, then $v_0$ must be colored yellow or black, and thus, $\tilde{\mu}(T_0, l_1, \mathsf{y}) = \tilde{\mu}(T_0-l_1, v_0, \mathsf{b}) + \tilde{\mu}(T_0-\{l_1, l_2, \ldots, l_s, v_0\}, y_0, \mathsf{w})$. Note that $v_0$ cannot be colored white since it is adjacent to at least one leaf. Furthermore, the graph $T_0-\{l_1, l_2, \ldots, l_s, v_0\}$ admits the trivial PED-set, and there exists a valid 3-coloring of it in which $y_0$ is assigned a non-white color. Thus, $\tilde{\mu}(T_0-\{l_1, l_2, \ldots, l_s, v_0\}, y_0, \mathsf{w}) < \tilde{\mu}(T_0-\{l_1, l_2, \ldots, l_s, v_0\})$. Finally, since $T_0-l_1$ is a tree on $n-1$ vertices, by the inductive hypothesis, $\tilde{\mu}(T_0-l_1) \leq \tilde{\mu}(P_{n-1})$. Likewise, since $T_0-\{l_1, l_2, \ldots, l_s, v_0\}$ is a tree on $n-s-1$ vertices, and $n-s-1 \leq n-3$, by the inductive hypothesis, $\tilde{\mu}(T_0-\{l_1, l_2, \ldots, l_s, v_0\}) \leq \tilde{\mu}(P_{n-3})$. To sum up,
\begin{align*}
	\tilde{\mu}(T_0)&=\tilde{\mu}(T_0, l_1, \mathsf{w}) + \tilde{\mu}(T_0, l_1, \mathsf{y}) \\
	&=\tilde{\mu}(T_0-l_1, v_0, \mathsf{y}) + (\tilde{\mu}(T_0-l_1, v_0, \mathsf{b}) + \tilde{\mu}(T_0-\{l_1, l_2, \ldots, l_s, v_0\}, y_0, \mathsf{w})) \\
	&=(\tilde{\mu}(T_0-l_1, v_0, \mathsf{y}) + \tilde{\mu}(T_0-l_1, v_0, \mathsf{b})) + \tilde{\mu}(T_0-\{l_1, l_2, \ldots, l_s, v_0\}, y_0, \mathsf{w}) \\
	&<\tilde{\mu}(T_0-l_1) + \tilde{\mu}(T_0-\{l_1, l_2, \ldots, l_s, v_0\}) \\
	&\leq \tilde{\mu}(P_{n-1}) + \tilde{\mu}(P_{n-3}) \\
	&=\tilde{\mu}(P_n).
\end{align*}

Consequently, $\tilde{\mu}(T_0)<\tilde{\mu}(P_n)$ for all $n\geq 5$. This contradicts the maximality of $\tilde{\mu}(T_0)$. In conclusion, $T_0$ must be isomorphic to the path $P_n$, or to the stars $K_{1,3}$ or $K_{1,4}$ when $n=4$ or $n=5$, respectively.

\end{proof}

\subsection{Proofs of Propositions \ref{prop: pairs k and l} and \ref{prop: s-k}.}~\label{subsec: proofsprop}

In this subsection, we prove the Propositions \ref{prop: pairs k and l} and \ref{prop: s-k}. We begin with some preliminary observations. Let $M$ be a graph, let $v_0\in V(M)$, and let $Q$ be a graph isomorphic to $P_k$, with vertex set $V(Q)=\{z_1, \ldots, z_k\}$, where $z_1$ and $z_k$ are the leaves of $Q$. Let $G$ be the graph obtained from $M+Q$ by adding the edge $v_0 z_1$. Assume first that $d_M(v_0)\geq 2$. Then
$$\tilde{\mu}(G)=q_{k-2} \tilde{\mu}(M, v_0, \mathsf{w}) + q_{k-1} \tilde{\mu}(M, v_0, \mathsf{b}) + q_{k-3} \tilde{\mu}(M, v_0, \mathsf{y}) + q_{k-1} \varphi_{v_0}(M).$$

Suppose that a vertex $z_j$ of $Q$ is assigned a non-white color, and consider the induced path $Q_j:=G[\{z_{j+1}, \ldots, z_k\}]$. Since $z_{j+1}$ is a leaf of $Q_j$, it can be either colored white or yellow. If $z_{j+1}$ is colored yellow, then its unique non-white neighbor may be either $z_j$ or $z_{j+2}$. In the former case, $z_{j+2}$ must be colored white, and hence $z_{j+1}$ is a vertex colored yellow whose neighbors in $Q_j$ are all colored white. Therefore, the number of colorings of $Q_j$ is $q_{k-j}$.

We compute $\tilde{\mu}(G)$. If $v_0$ is colored white, then all its neighbors in $M$ are colored yellow. Moreover, $z_1$ must also be colored yellow, and $z_2$ must be assigned a non-white color. Therefore, $\tilde{\mu}(G, v_0, \mathsf{w})=q_{k-2} \tilde{\mu}(M, v_0, \mathsf{w})$. If $v_0$ is colored black, then, since $d_M(v_0)\geq 2$, it has at least two neighbors in $M$, and all of which are colored non-white. Moreover, $z_1$ must be assigned a non-white color. Therefore, we have $\tilde{\mu}(G, v_0, \mathsf{b})=q_{k-1} \tilde{\mu}(M, v_0, \mathsf{b})$. Finally, if $v_0$ is colored yellow, then its unique neighbor colored non-white lies either in $M$ or outside $M$. In the former case, $z_1$ must be colored white, $z_2$ yellow, and $z_3$ assigned a non-white color. In the latter case, $z_1$ is assigned a non-white color. Therefore, $\tilde{\mu}(G, v_0, \mathsf{y})=q_{k-3} \tilde{\mu}(M, v_0, \mathsf{y}) + q_{k-1} \varphi_{v_0}(M)$. Combining the three cases above yields the desired expression for $\tilde{\mu}(G)$.

Now suppose that $d_M(v_0)=1$. The only difference from the previous case arises when $v_0$ is colored black. Indeed, the unique neighbor of $v_0$ in $M$ must be colored non-white, and therefore recoloring $v_0$ from black to yellow produces a valid 3-coloring of $M$. Consequently, $\tilde{\mu}(G, v_0, \mathsf{b})=q_{k-1} \tilde{\mu}(M, v_0, \mathsf{y})$.

\begin{proof}[Proof of Proposition \ref{prop: pairs k and l}]
Suppose first that $d_H(v_0)\geq 2$. By applying the above remark twice, the number of PED-sets of $G$ is
$$\tilde{\mu}(G)=q_{k-2} \, q_{l-2} \tilde{\mu}(H, v_0, \mathsf{w}) + q_{k-1} \, q_{l-1} \tilde{\mu}(H, v_0, \mathsf{b}) + q_{k-3} \, q_{l-3} \tilde{\mu}(H, v_0, \mathsf{y}) + (q_{k-1} \, q_{l-3} + q_{k-3} \, q_{l-1})\varphi_{v_0}(H),$$
whereas the number of PED-sets of $G'$ is:
$$\tilde{\mu}(G')=q_{k+l-2} \tilde{\mu}(H, v_0, \mathsf{w}) + q_{k+l-1} \tilde{\mu}(H, v_0, \mathsf{b}) + q_{k+l-3} \tilde{\mu}(H, v_0, \mathsf{y}) + q_{k+l-1} \varphi_{v_0}(H).$$

Recall that $k\leq l$. The following four claims hold for every $k\geq 3$ and $l\geq 3$, whose proofs proceed by induction on $l$.
\medskip

\noindent \textbf{Claim 1.} $q_{k+l-2}>q_{k-2} q_{l-2}$.
\medskip

\emph{Proof of the claim.} 
%If $l=4$, then, if $k=2$, $q_4 > q_0 q_2$; if $k=3$, $q_5 > q_1 q_2$; and if $k=4$, $q_6 > q_2 q_2$. Suppose $l>4$, and that Claim 1 holds for values fewer than $l$. Then
If $l=3$, then $k=3$, and $q_4>q_1 q_1$. If $l=4$, then, if $k=3$, $q_5 > q_1 q_2$; and if $k=4$, $q_6 > q_2 q_2$. If $l=5$, then, if $k=3$, $q_6 > q_1 q_3$; if $k=4$, $q_7 > q_2 q_3$; and if $k=5$, $q_8 > q_3 q_3$. Suppose $l>5$, and that Claim 1 holds for values fewer than $l$. Then
\begin{align*}
     q_{k+l-2} &= q_{(k+l-2)-1} + q_{(k+l-2)-3} \\
     &=q_{k+(l-1)-2} + q_{k+(l-3)-2} \\
     &>q_{k-2} q_{(l-1)-2} + q_{k-2} q_{(l-3)-2} \\
     &=q_{k-2}(q_{(l-2)-1} + q_{(l-2)-3}) \\
     &=q_{k-2} q_{l-2}.
\end{align*}

\noindent \textbf{Claim 2.} $q_{k+l-1}>q_{k-1} q_{l-1}$.
\medskip

\emph{Proof of the claim.} 
%If $l=3$, then, if $k=1$, $q_3 > q_0 q_2$; if $k=2$, $q_4 > q_1 q_2$; and if $k=3$, $q_5 > q_2 q_2$. Suppose $l>3$, and that Claim 2 holds for values fewer than $l$. Then
If $l=3$, then $k=3$, and $q_5 > q_2 q_2$. If $l=4$, then, if $k=3$, $q_6 > q_2 q_3$; and if $k=4$, $q_7 > q_3 q_3$. If $l=5$, then, if $k=3$, $q_7 > q_2 q_4$; if $k=4$, $q_8 > q_3 q_4$; and if $k=5$, $q_9 > q_4 q_4$. Suppose $l>5$, and that Claim 2 holds for values fewer than $l$. Then
\begin{align*}
     q_{k+l-1} &= q_{(k+l-1)-1} + q_{(k+l-1)-3} \\
     &=q_{k+(l-1)-1} + q_{k+(l-3)-1} \\
     &>q_{k-1} q_{(l-1)-1} + q_{k-1} q_{(l-3)-1} \\
     &=q_{k-1}(q_{(l-1)-1} + q_{(l-1)-3}) \\
     &=q_{k-1} q_{l-1}.
\end{align*}

%\textcolor{black}{Moreover, equality in the above inequality would force equality simultaneously for the pairs $(k,l-1)$ and $(k,l-3)$. By the induction hypothesis, this would imply $(k,l-1)=(2,5)$ and $(k,l-3)=(2,5)$, which is impossible. Hence, for $l>5$, the inequality is strict, and the only equality case is $(k,l)=(2,5)$.}

\noindent \textbf{Claim 3.}
 $q_{k+l-3}>q_{k-3} q_{l-3}$.
\medskip

\emph{Proof of the claim.} 
%If $l=5$, then, if $k=3$, $q_5 > q_0 q_2$; if $k=4$, $q_6 > q_1 q_2$; and if $k=5$, $q_7 > q_2 q_2$. Suppose $l>5$, and that Claim 3 holds for values fewer than $l$. Then
If $l=3$, then $k=3$, and $q_3>q_0 q_0$. If $l=4$, then, if $k=3$, $q_4>q_0 q_1$; and if $k=4$, $q_5>q_1 q_1$. If $l=5$, then, if $k=3$, $q_5 > q_0 q_2$; if $k=4$, $q_6 > q_1 q_2$; and if $k=5$, $q_7 > q_2 q_2$. Suppose $l>5$, and that Claim 3 holds for values fewer than $l$. Then
\begin{align*}
     q_{k+l-3} &= q_{(k+l-3)-1} + q_{(k+l-3)-3} \\
     &=q_{k+(l-1)-3} + q_{k+(l-3)-3} \\
     &>q_{k-3} q_{(l-1)-3} + q_{k-3} q_{(l-3)-3} \\
     &=q_{k-3}(q_{(l-3)-1} + q_{(l-3)-3}) \\
     &=q_{k-3} q_{l-3}
\end{align*}

\noindent \textbf{Claim 4.} $q_{k+l-1}>q_{k-1} q_{l-3} + q_{k-3} q_{l-1}$.
\medskip

\emph{Proof of the claim.} 
%If $l=5$, then, if $k=3$, $q_7 > q_2 q_2 + q_0 q_4$; if $k=4$, $q_8 > q_3 q_2 + q_1 q_4$; and if $k=5$, $q_9 > q_4 q_2 + q_2 q_4$. Suppose $l>5$, and that Claim 4 holds for values fewer than $l$. Then
If $l=3$, then $k=3$, and $q_5>q_2 q_0 + q_0 q_2$. If $l=4$, then, if $k=3$, $q_6>q_2 q_1 + q_0 q_3$; and if $k=4$, $q_7>q_3 q_1 + q_1 q_3$. If $l=5$, then, if $k=3$, $q_7 > q_2 q_2 + q_0 q_4$; if $k=4$, $q_8 > q_3 q_2 + q_1 q_4$; and if $k=5$, $q_9 > q_4 q_2 + q_2 q_4$. Suppose $l>5$, and that Claim 4 holds for values fewer than $l$. Then
\begin{align*}
     q_{k+l-1} &= q_{(k+l-1)-1} + q_{(k+l-1)-3} \\
     &=q_{k+(l-1)-1} + q_{k+(l-3)-1} \\
     &>(q_{k-1} q_{(l-1)-3} + q_{k-3} q_{(l-1)-1}) + (q_{k-1} q_{(l-3)-3} + q_{k-3}q_{(l-3)-1}) \\
     &=q_{k-1}(q_{(l-3)-1} + q_{(l-3)-3}) + q_{k-3}(q_{(l-1)-1} + q_{(l-1)-3}) \\
     &=q_{k-1} q_{l-3} + q_{k-3} q_{l-1}
\end{align*}

The following two claims analyze $\tilde{\mu}(G)$ and $\tilde{\mu}(G')$ when $k=1$ and $k=2$, respectively, that is, when $R$ is isomorphic to $P_1$ or $P_2$.
\medskip

\noindent \textbf{Claim 5.} If $k=1$ and $l\geq 3$, the number of PED-sets of $G$ is
$$\tilde{\mu}(G)=0\cdot\tilde{\mu}(H, v_0, \mathsf{w}) + q_{l-1} \tilde{\mu}(H, v_0, \mathsf{b}) + q_{l-3} \tilde{\mu}(H, v_0, \mathsf{y}) + (q_{l-3} + q_{l-1}) \varphi_{v_0}(H),$$
whereas the number of PED-sets of $G'$ is
$$\tilde{\mu}(G')=q_{l-1} \tilde{\mu}(H, v_0, \mathsf{w}) + q_l \tilde{\mu}(H, v_0, \mathsf{b}) + q_{l-2} \tilde{\mu}(H, v_0, \mathsf{y}) + q_l \varphi_{v_0}(H).$$

Then $\tilde{\mu}(G) < \tilde{\mu}(G')$.
\medskip

\emph{Proof of the claim.} According to the definition of the succession $(q_n)_{n\geq 0}$ and by Lemma \ref{lem: coloreos raros}, it follows that $q_{l-1} > 0$, $q_l > q_{l-1}$, $q_{l-2} > q_{l-3}$, and $q_l = q_{l-3} + q_{l-1}$. Due to the existence of the trivial PED-set of $H$, $\tilde{\mu}(H, v_0, \mathsf{b})$ or $\tilde{\mu}(H, v_0, \mathsf{y})$ is non-zero. Therefore, $\tilde{\mu}(G) < \tilde{\mu}(G')$.
\medskip

\noindent \textbf{Claim 6.} If $k=2$ and $l\geq 3$, the number of PED-sets of $G$ is
$$\tilde{\mu}(G)=q_{l-2} \tilde{\mu}(H, v_0, \mathsf{w}) + q_1 q_{l-1} \tilde{\mu}(H, v_0, \mathsf{b}) + 0\cdot\tilde{\mu}(H, v_0, \mathsf{y}) + q_1 q_{l-3} \varphi_{v_0}(H),$$
whereas the number of PED-sets of $G'$ is
$$\tilde{\mu}(G')=q_l \tilde{\mu}(H, v_0, \mathsf{w}) + q_{l+1} \tilde{\mu}(H, v_0, \mathsf{b}) + q_{l-1} \tilde{\mu}(H, v_0, \mathsf{y}) + q_{l+1} \varphi_{v_0}(H).$$

Then $\tilde{\mu}(G) < \tilde{\mu}(G')$, except when $l=5$, in whose case $\tilde{\mu}(G) \leq \tilde{\mu}(G')$.
\medskip

\emph{Proof of the claim.} The same argument of the above proof shows that $q_l > q_{l-2}$ and $q_{l-1} > 0$. What remains to be shown is that $q_{l+1} > q_1 q_{l-1}$ and $q_{l+1} > q_1 q_{l-3}$. Note that $q_{l+1} > q_1 q_{l-1}$ implies $q_{l+1} > q_1 q_{l-3}$ since $q_{l-1} > q_{l-3}$. Therefore, it suffices to prove the first inequality.  We proceed by induction on $l$. If $l=3$, $q_4>q_1 q_2$. If $l=4$, $q_5>q_1 q_3$. If $l=5$, $q_6=q_1 q_4$, and this shows that $\tilde{\mu}(G) \leq \tilde{\mu}(G')$. If $l=6$, $q_7>q_1 q_5$. If $l=7$, $q_8>q_1 q_6$. And if $l=8$, $q_9>q_1 q_7$. Suppose that $l>8$, and that Claim 6 holds for values fewer than $l$. Then
$$q_{l+1}=q_{(l+1)-1} + q_{(l+1)-3}=q_{(l-1)+1} + q_{(l-3)+1}>q_1 q_{(l-1)-1} + q_1 q_{(l-3)-1}=q_1 q_{l-1}.$$
%\textcolor{black}{By Claim 2, we have $q_{l+1}\geq q_1 q_{l-1}$, and equality holds only when $l=5$. Since $l=5$ is excluded in this claim, we have $q_{l+1}>q_1 q_{l-1}$.} 
Therefore, $\tilde{\mu}(G) < \tilde{\mu}(G')$.
\medskip

Now, suppose that $d_H(v_0)=1$. The number of PED-sets of $G$ is
$$\tilde{\mu}(G)=q_{k-2} q_{l-2} \tilde{\mu}(H, v_0, \mathsf{w}) + (q_{k-1} q_{l-1} + q_{k-3} q_{l-3}) \tilde{\mu}(H, v_0, \mathsf{y}) + (q_{k-1} q_{l-3} + q_{k-3} q_{l-1}) \varphi_{v_0}(H),$$
whereas the number of PED-sets of $G'$ is
$$\tilde{\mu}(G')=q_{k+l-2} \tilde{\mu}(H, v_0, \mathsf{w}) + q_{k+l} \tilde{\mu}(H, v_0, \mathsf{y}) + q_{k+l-1} \varphi_{v_0}(H).$$

The case $(k,l)=(2,5)$ is the only additional case in which the coefficient comparison above may fail to be strict, because $q_6=q_1q_4$. This case is excluded in the statement. To prove that $\tilde{\mu}(G)<\tilde{\mu}(G')$, we argue as in the previous case.
\end{proof}

\begin{proof}[Proof of Proposition \ref{prop: s-k}]
The number of PED-sets of $G$ is
$$\tilde{\mu}(G)=t_s \tilde{\mu}(H, v_0, \mathsf{w}) + 2^k \tilde{\mu}(H, v_0, \mathsf{y}) + x_k \tilde{\mu}(H, v_0, \mathsf{y}) + z_k \varphi_{v_0}(H),$$
whereas the number of PED-sets of $G'$ is
$$\tilde{\mu}(G')=q_{2k+s-2} \tilde{\mu}(H, v_0, \mathsf{w}) + q_{2k+s-1} \tilde{\mu}(H, v_0, \mathsf{y}) + q_{2k+s-3} \tilde{\mu}(H, v_0, \mathsf{y}) + q_{2k+s-1} \varphi_{v_0}(H),$$
where
\begin{multicols}{3}
\[
t_s = \begin{cases}
    0 & \text{if } s > 0 \\
    1 & \text{if } s = 0
\end{cases}
\]

\[
x_k = \begin{cases}
    0 & \text{if } k > 0 \\
    1 & \text{if } k = 0
\end{cases}
\]

\[
z_k = \begin{cases}
    0 & \text{if } k > 1 \\
    \textcolor{black}{2} & \text{if } k = 1 \\
    s & \text{if } k = 0
\end{cases}
\]
\end{multicols}

The value $z_1=2$ follows from the fact that, when $k=1$ and $v_0$ is colored yellow with its unique non-white neighbor outside $H$, there are two possible extensions: either $u_1$ is colored yellow and $v_1$ is colored white, or $u_1$ is colored black and $v_1$ is colored yellow.

Thus, it suffices to prove that $q_{2k+s-2} > t_s$, $q_{2k+s} > 2^k + x_k$, and $q_{2k+s-1} \geq z_k$. Recall that $2k+s\geq 3$.

Since $2k+s-2\geq 1$, then $q_{2k+s-2}\geq q_1=2 > t_s$. This proves the first inequality.

To prove the second inequality, $q_{2k+s} > 2^k + x_k$, we proceed by induction on $k$. If $k=0$, then $q_s > 2^0 + x_0$; if $k=1$, then $q_{s+2} > 2^1 + x_1$; and if $k=2$, then $q_{s+4} > 2^2 + x_2$. Suppose that $k>2$, and that the inequality holds for each value less than $k$. Then, by the inductive hypothesis,
\begin{align*}
     q_{2k+s} &= q_{2k+s-1} + q_{2k+s-3} \\
     &= (q_{2k+s-2} + q_{2k+s-4}) + (q_{2k+s-4} + q_{2k+s-6}) \\
     &= q_{2(k-1)+s} + 2q_{2(k-2)+s} + q_{2(k-3)+s} \\
     &> 2^{k-1} + x_{k-1} + 2(2^{k-2} + x_{k-2}) + 2^{k-3} + x_{k-3} \\
     &\geq 2^k + 2^{k-3} \\
     &> 2^k.
\end{align*}

\textcolor{black}{Finally, it is easy to verify that $q_{2k+s-1} \geq z_k$ for all $(k,s)\notin\{(0,3),(0,4)\}$. Indeed, if $k>1$, then $z_k=0$; if $k=1$, then $z_k=2$ and, since $2k+s\geq 3$, we have $s\geq 1$, and hence $q_{2k+s-1}=q_{s+1}\geq q_2=2=z_k$; and if $k=0$, then $q_{2k+s-1}=q_{s-1}\geq s=z_k$ for every $s\geq 5$. Therefore, the last inequality follows.}
\end{proof}

\subsection{Perfect edge dominating sets in forests}~\label{subsec: pedforests}

In this subsection, we determine the extremal forests on $n$ vertices. Denote by $\mathcal{F}_n$ the family of extremal forests on $n$ vertices. Thus $\mathcal{F}_1 = \{P_1\}$, and $\mathcal{F}_2 = \{P_2, 2P_1\}$.

Note that, for $n\geq 3$, every forest in $\mathcal{F}_n$ must contain a connected component on at least three vertices. Otherwise, if there existed a forest $F_0 \in \mathcal{F}_n$ whose connected components all have at most two vertices, then $\tilde{\mu}(F_0)=1$, contradicting the definition of $\mathcal{F}_n$. Consequently, it follows that $\mathcal{F}_3 = \{P_3\}$, $\mathcal{F}_4 = \{P_4, K_{1,3}\}$, and $\mathcal{F}_5 = \{P_5, K_{1,4}\}$.

\begin{lem}~\label{lem: maxforest}
Let $n\geq 3$. Then the following statements hold:
\begin{enumerate}
     \item For every $F_1 \in \mathcal{F}_{n-1}$ and $F_2 \in \mathcal{F}_n$, $\tilde{\mu}(F_1)<\tilde{\mu}(F_2)$.
     \item Every connected component of every forest $F \in \mathcal{F}_n$ has at least three vertices.
\end{enumerate}
\end{lem}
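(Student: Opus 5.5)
We prove both statements together by induction on $n$. The base cases $n=3,4,5$ can be read off from the explicit families $\mathcal{F}_2=\{P_2,2P_1\}$, $\mathcal{F}_3=\{P_3\}$, $\mathcal{F}_4=\{P_4,K_{1,3}\}$ and $\mathcal{F}_5=\{P_5,K_{1,4}\}$. Their values are $1<3<4<6$, and every member of $\mathcal{F}_3,\mathcal{F}_4,\mathcal{F}_5$ is connected on at least three vertices.

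For Statement 1, the key observation is that adding an isolated vertex does not change the count. By Remark \ref{rmk: connected components}, $\tilde{\mu}(F_1+P_1)=\tilde{\mu}(F_1)$. Since $F_1+P_1$ is a forest on $n$ vertices, we get $\tilde{\mu}(F_1)\le \tilde{\mu}(F_2)$. Strictness requires a better forest on $n$ vertices. Since $n-1\ge 2$, $F_1$ has a component $T$ on $m\ge 2$ vertices (for $n-1=2$ simply take $F_2=P_3$). Replace $T$ by the tree $T'$ obtained from a copy of $P_m$ by attaching a new leaf to an end vertex; that is, replace $T$ by $P_{m+1}$. Theorem \ref{thm: pedtree} gives $\tilde{\mu}(T)\le\tilde{\mu}(P_m)$. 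Theorem \ref{thm: recursive formula}, together with $\tilde{\mu}(P_{m-2})\ge1$ and the small values $1,1,3,4$, gives $\tilde{\mu}(P_{m+1})>\tilde{\mu}(P_m)$ for $m\ge 2$. Since the other components have positive counts, Remark \ref{rmk: connected components} yields a forest on $n$ vertices with more PED-sets than $F_1$. Hence $\tilde{\mu}(F_1)<\tilde{\mu}(F_2)$.

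For Statement 2, suppose $F\in\mathcal{F}_n$ has a component $C$ with $|V(C)|\in\{1,2\}$. Then $\tilde{\mu}(C)=1$, so $\tilde{\mu}(F)=\tilde{\mu}(F-V(C))$, and $F-V(C)$ is a forest on $n-|V(C)|$ vertices. This gives
\[
\tilde{\mu}(F)\le \max_{F'\in\mathcal{F}_{n-|V(C)|}}\tilde{\mu}(F') < \tilde{\mu}(F),
\]
a contradiction. The strict inequality comes from applying Statement 1 once or twice (to $n-1$ and $n$). This requires Statement 1 at index $n-1$, which needs $n-1\ge3$, or else a direct check at small $n$: for $n=3,4$, the statement was already verified from the explicit families. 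The unit term $\tilde{\mu}(F')\ge1$ also covers the case where $n-|V(C)|\in\{1,2\}$, because every member of $\mathcal{F}_n$ with $n\ge3$ has value at least $3$.

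The main obstacle is the strict growth step in Statement 1. We must make sure that extending a component by one vertex strictly increases the count. The extension must be to a path rather than an arbitrary attachment, because attaching a leaf in general could decrease $\tilde{\mu}$. We also have to handle the degenerate cases where all components are small, which is why we appeal to the already-known small families.
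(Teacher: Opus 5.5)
Your proposal is correct and follows essentially the paper's argument: you bound a component of $F_1$ by $P_m$ via Theorem~\ref{thm: pedtree}, extend it to $P_{m+1}$ using the strict growth given by Theorem~\ref{thm: recursive formula}, and for Statement~2 you delete a component with $\tilde{\mu}=1$ and invoke Statement~1. Two small slips are harmless but should be fixed: $\tilde{\mu}(P_5)=\tilde{\mu}(K_{1,4})=5$, not $6$ (the chain $1<3<4<5$ still holds), and for $n-1\ge 3$ the existence of a component of $F_1$ on at least two vertices follows from the extremality of $F_1$ (an edgeless forest has only one PED-set, while $\tilde{\mu}(F_1)\ge\tilde{\mu}(P_{n-1})\ge 3$), or from Statement~2 at $n-1$, rather than from $n-1\ge 2$ alone.
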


\begin{proof}
If $n=3$, then every forest in $\mathcal{F}_2$ has fewer PED-sets than every forest in $\mathcal{F}_3$. Assume now that $n\geq 4$, and let $F_1 \in \mathcal{F}_{n-1}$ and $F_2 \in \mathcal{F}_n$. Let $X$ be a connected component of $F_1$ with the largest number vertices, say $k$. Then $k\geq 3$. By Theorem \ref{thm: pedtree}, we have $\tilde{\mu}(X)\leq\tilde{\mu}(P_k)$, and by Theorem \ref{thm: recursive formula}, we have $\tilde{\mu}(P_k)<\tilde{\mu}(P_{k+1})$. Therefore,
$$\tilde{\mu}(F_1) = \tilde{\mu}(X) \tilde{\mu}(F_1-X) \leq \tilde{\mu}(P_k) \tilde{\mu}(F_1-X) < \tilde{\mu}(P_{k+1}) \tilde{\mu}(F_1-X) \leq \tilde{\mu}(F_2),$$
and Statement 1 follows.

Let $F \in \mathcal{F}_n$. Suppose, for contradiction, that $C$ is a connected component of $F$ on at most two vertices. Then $C$ is isomorphic to $P_1$ or $P_2$, and hence $\tilde{\mu}(C)=1$. Since $F-C$ has at most $n-1$ vertices, Statement 1 implies that $\tilde{\mu}(F-C)\leq \tilde{\mu}(F')<\tilde{\mu}(F)$ for every $F'\in \mathcal{F}_{n-1}$. On the other hand, $\tilde{\mu}(F) = \tilde{\mu}(C) \, \tilde{\mu}(F-C) = \tilde{\mu}(F-C)$, which is a contradiction.
\end{proof}

\begin{rmk}~\label{rmk: forests on six-twelve vertices}
Using Statement 2 of Lemma \ref{lem: maxforest}, we conclude that $\mathcal{F}_6 = \{2P_3\}$ and $\tilde{\mu}(2P_3)=9$, $\mathcal{F}_7 = \{P_7, P_3 + P_4, P_3 + K_{1,3}\}$ and $\tilde{\mu}(F)=12$ for all $F \in \mathcal{F}_7$, $\mathcal{F}_8 = \{P_8\}$ and $\tilde{\mu}(P_8)=17$, $\mathcal{F}_9 = \{3P_3\}$ and $\tilde{\mu}(3P_3)=27$, $\mathcal{F}_{10} = \{P_{10}\}$ and $\tilde{\mu}(P_{10})=37$, $\mathcal{F}_{11} = \{P_{11}\}$ and $\tilde{\mu}(P_{11})=54$, and $\mathcal{F}_{12} = \{4P_3\}$ and $\tilde{\mu}(4P_3)=81$.
\end{rmk}

\begin{theorem}~\label{thm: maxnumber pedforest}
If $n\geq 13$, then $\mathcal{F}_n = \{P_n\}$.
\end{theorem}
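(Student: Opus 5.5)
Let $p_n:=\tilde{\mu}(P_n)$, so that $p_1=p_2=1$, $p_3=3$ and, by Theorem~\ref{thm: recursive formula}, $p_n=p_{n-1}+p_{n-3}$ for $n\ge 4$. The plan is to reduce to products of path values and then prove a supermultiplicativity inequality. Take $F\in\mathcal{F}_n$ with $n\ge 13$. By Statement~2 of Lemma~\ref{lem: maxforest}, every component of $F$ has at least three vertices. By Theorem~\ref{thm: pedtree}, a component on $m$ vertices contributes at most $p_m$, with equality only for $P_m$, $K_{1,3}$ or $K_{1,4}$. By Remark~\ref{rmk: connected components}, therefore,
\[
\tilde{\mu}(F)\le \prod_i p_{m_i},\qquad \textstyle\sum_i m_i=n,\ m_i\ge 3.
\]
It suffices to show that whenever there are at least two parts, $\prod_i p_{m_i}<p_n$. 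Then $\tilde{\mu}(F)<p_n=\tilde{\mu}(P_n)$ contradicts extremality, so $F$ is a tree. By Theorem~\ref{thm: pedtree} and $n\ge 13$, $F=P_n$.

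The core is a two-part inequality: $p_a p_b\le p_{a+b}$ for $a,b\ge 3$. The natural tool is the identity $p_{a+b}=p_a p_{b+1}+ (\text{cross terms})$ obtained by cutting $P_{a+b}$ at an edge and conditioning on the colours near the cut. A simpler route is induction on $b$ via the recursion, exactly as in Claims~1--4 of the proof of Proposition~\ref{prop: pairs k and l}: $p_{a+b}=p_{a+b-1}+p_{a+b-3}\ge p_a(p_{b-1}+p_{b-3})=p_ap_b$. This needs base cases $b\in\{3,4,5\}$ and all $a\ge 3$, which I would check by a short table using the growth $p_{m+1}/p_m\to\rho\approx1.4656$. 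Equality does occur, for instance $p_3p_3=9=p_6$ and $p_3p_4=12=p_7$, so the pairwise inequality is only weak.

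For $r\ge 2$ parts I cannot just iterate the weak inequality, because that loses strictness. Instead I will exploit the fact that equality is rare. First I would determine exactly when $p_ap_b=p_{a+b}$. The small values ($6$, $7$, plus checks like $p_3p_6=27>p_9=26$) show that products of $P_3$'s beat long paths for small $n$, which is why Remark~\ref{rmk: forests on six-twelve vertices} has exceptions. The real claim is therefore asymptotic. Since $p_m\approx c\rho^m$, a single path wins against $r$ parts once $c^{r-1}<1$, and comparing against $3^{n/3}=(1.442)^n<\rho^n$ shows the path wins for large $n$.

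The concrete plan is a strong induction on $n\ge 13$, with base cases $13\le n\le 15$ (or so) verified numerically by listing all partitions into parts $\ge 3$. The inductive step runs as follows. Given a partition of $n$ with at least two parts, merge two parts or remove a part $m$ so that the rest sums to $n-m\ge 13$. Then $\prod\le p_m\,p_{n-m}$ by induction, or by the table when $n-m<13$, using Remark~\ref{rmk: forests on six-twelve vertices}, which gives the maximum $f(n-m)$ over forests on $n-m$ vertices. So it suffices to prove $p_m f(n-m)<p_n$ for all $3\le m\le n-3$, where $f(j)=p_j$ for $j\ge 13$ and $f$ is given by the remark for $j\le 12$. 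For $n-m\ge 13$ this is the strict two-part inequality $p_mp_{j}<p_{m+j}$ with $j\ge13$. I would prove that by induction on $j$ with the recursion, where strictness propagates from strict base cases $j=13,14,15$ (using $p_{m+j}=p_{m+j-1}+p_{m+j-3}$ and the two hypotheses $j-1$, $j-3$, at least one strict). For $n-m\le 12$ it is a finite family in $m$ for each $j$, again closed by the same recursion in $m$ once it holds for three consecutive $m$. The main obstacle is the bookkeeping of these base cases: making sure strictness holds at three consecutive starting values so that the recursion carries strict inequality, especially near the $P_3$-power values $f(9)=27$ and $f(12)=81$, which are the tightest competitors.
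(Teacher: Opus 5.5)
Your concrete plan is correct. It is essentially the paper's argument, with the finite verification organized differently.

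Both proofs reduce, via Lemma~\ref{lem: maxforest}, Theorem~\ref{thm: pedtree} and Remark~\ref{rmk: connected components}, to the inequalities $p_m f(n-m)<p_n$ for $3\le m\le n-3$. Here $f(j)$ is the maximum over forests on $j$ vertices: $f(j)=p_j$ for $j\ge 13$ by induction, and $f(j)$ is read off Remark~\ref{rmk: forests on six-twelve vertices} for $j\le 12$. The paper's step for $n>25$ is exactly your induction on $j$. It removes a smallest part $n_j$, which forces $n-n_j\ge 13$, and writes $p_{n_j}p_{n-n_j}=p_{n_j}p_{n-n_j-1}+p_{n_j}p_{n-n_j-3}<p_{n-1}+p_{n-3}$.

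The difference is the base region. The paper tabulates all values $p_r f(n-r)$ for $13\le n\le 25$ in Table~\ref{tab: tabla1}. It must go up to $25$ so that a smallest part leaves at least $13$ vertices. You instead fix $j$ and recurse in $m$, which needs only three consecutive strict values for each $j\in\{3,\dots,15\}$, a much smaller check.

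\begin{itemize}
\item Starting at $m=\max(3,13-j)$ works, since every starting value is strict. The tightest cases are $p_{10}f(3)=111<p_{13}=116$, $p_7f(6)=108<p_{13}$, $p_4f(9)=108<p_{13}$ and $p_3f(12)=243<p_{15}=249$.
\item Starting at $m=3$ would not work in general: $p_3f(3)=9>p_6=8$, $p_3f(6)=27>p_9=25$, and $p_3f(9)=81>p_{12}=79$.
\item The base cases $j\in\{13,14,15\}$ of your $j$-induction are themselves infinite families in $m$. They need the same $m$-recursion from $m\in\{3,4,5\}$, for instance $3p_{13}=348<365$, $3p_{14}=510<535$ and $3p_{15}=747<784$.
\item Once these are in hand, your separate numerical check of $13\le n\le 15$ is redundant.
\end{itemize}

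You should delete the paragraph that presents $p_ap_b\le p_{a+b}$ for all $a,b\ge 3$ as the core. That inequality is false, and your numbers there are off.

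\begin{itemize}
\item $p_6=8$, not $9$; the value $9$ is $\tilde{\mu}(2P_3)$. So $p_3p_3=9>p_6$, and the induction on $b$ in the style of Claims~1--4, from bases $b\in\{3,4,5\}$, fails at its first base case.
\item Likewise $p_9=25$, not $26$, and $p_3p_6=24<p_9$. What beats $P_9$ is $3P_3$, with $27$ PED-sets.
\item In fact $p_ap_b<p_{a+b}$ holds for all $a,b\ge 3$ except $\{a,b\}=\{3,3\}$, where it fails, and $\{3,4\}$, where equality holds.
\end{itemize}

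The small-$n$ exceptions in Remark~\ref{rmk: forests on six-twelve vertices} come from unions of several copies of $P_3$. That is exactly why you must use $f(j)$ rather than $p_j$ for $j\le 12$. Your final plan does this and never uses the false inequality, so it is unaffected.
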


\begin{proof}
If $F$ is any forest on $n$ vertices with connected components $F_1,\ldots,F_k$, by Theorem \ref{thm: pedtree} we have $\tilde{\mu}(F)=\tilde{\mu}(F_1)\cdots\tilde{\mu}(F_k)\leq\tilde{\mu}(L_1)\cdots\tilde{\mu}(L_k)=\tilde{\mu}(L)$, where $L=\sum_{i=1}^k L_i$ is a linear forest. Suppose that $L$ has $n\geq 13$ vertices, and let $n_i=|V(L_i)|$ for all $1\leq i\leq k$. By Lemma \ref{lem: maxforest}, we may assume that $n_i\geq 3$ for all $1\leq i\leq k$.

We proceed by induction on $n$. Let $13\leq n\leq 25$. For any connected component $L'$ on $r$ vertices of $L$, we have $\tilde{\mu}(L) = \tilde{\mu}(L') \, \tilde{\mu}(L-L') \leq \tilde{\mu}(L') \, \tilde{\mu}(F') = \tilde{\mu}(L' + F')$, where $F' \in \mathcal{F}_{n-r}$. Table \ref{tab: tabla1} was constructed by the recurrence of Theorem \ref{thm: recursive formula}, and for each $3\leq r\leq 22$, the row labeled $r$ gives the largest value obtainable when one component has $r$ vertices and the remaining $n-r$ vertices form an extremal forest. It is clear that $\tilde{\mu}(L' + F')<\tilde{\mu}(P_n)$ for each $3\leq r\leq 22$.

\begin{table}[ht]
\centering
\begin{tabular}{|c|c|c|c|c|c|c|c|c|c|c|c|c|c|}
\hline
$n$ & $13$ & $14$ & $15$ & $16$ & $17$ & $18$ & $19$ & $20$ & $21$ & $22$ & $23$ & $24$ & $25$ \\
\hline
$\tilde{\mu}(P_n)$ & $116$ & $170$ & $249$ & $365$ & $535$ & $784$ & $1149$ & $1684$ & $2468$ & $3617$ & $5301$ & $7769$ & $11386$ \\
\hline
$3$ & $111$ & $162$ & $243$ & $348$ & $510$ & $747$ & $1095$ & $1605$ & $2352$ & $3447$ & $5052$ & $7404$ & $10851$ \\
\hline
$4$ & $108$ & $148$ & $216$ & $324$ & $464$ & $680$ & $996$ & $1460$ & $2140$ & $3136$ & $4596$ & $6736$ & $9872$ \\
\hline
$5$ & $85$ & $135$ & $185$ & $270$ & $405$ & $580$ & $850$ & $1245$ & $1825$ & $2675$ & $3920$ & $5745$ & $8420$ \\
\hline
$6$ & $96$ & $136$ & $216$ & $296$ & $432$ & $648$ & $928$ & $1360$ & $1992$ & $2920$ & $4280$ & $6272$ & $9192$ \\
\hline
$7$ & $108$ & $144$ & $204$ & $324$ & $444$ & $648$ & $972$ & $1392$ & $2040$ & $2988$ & $4380$ & $6420$ & $9408$ \\
\hline
$8$ & $85$ & $153$ & $204$ & $289$ & $459$ & $629$ & $918$ & $1377$ & $1972$ & $2890$ & $4233$ & $6205$ & $9095$ \\
\hline
$9$ & $100$ & $125$ & $225$ & $300$ & $425$ & $675$ & $925$ & $1350$ & $2025$ & $2900$ & $4250$ & $6225$ & $9125$ \\
\hline
$10$ & $111$ & $148$ & $185$ & $333$ & $444$ & $629$ & $999$ & $1369$ & $1998$ & $2997$ & $4292$ & $6290$ & $9213$ \\
\hline
$11$ & $54$ & $162$ & $216$ & $270$ & $486$ & $648$ & $918$ & $1458$ & $1998$ & $2916$ & $4374$ & $6264$ & $9180$ \\
\hline
$12$ & $79$ & $79$ & $237$ & $316$ & $395$ & $711$ & $948$ & $1343$ & $2133$ & $2923$ & $4266$ & $6399$ & $9164$ \\
\hline
$13$ & $-$ & $116$ & $116$ & $348$ & $464$ & $580$ & $1044$ & $1392$ & $1972$ & $3132$ & $4292$ & $6264$ & $9396$ \\
\hline
$14$ & $-$ & $-$ & $170$ & $170$ & $510$ & $680$ & $850$ & $1530$ & $2040$ & $2890$ & $4590$ & $6290$ & $9180$ \\
\hline
$15$ & $-$ & $-$ & $-$ & $249$ & $249$ & $747$ & $996$ & $1245$ & $2241$ & $2988$ & $4233$ & $6723$ & $9213$ \\
\hline
$16$ & $-$ & $-$ & $-$ & $-$ & $365$ & $365$ & $1095$ & $1460$ & $1825$ & $3285$ & $4380$ & $6205$ & $9855$ \\
\hline
$17$ & $-$ & $-$ & $-$ & $-$ & $-$ & $535$ & $535$ & $1605$ & $2140$ & $2675$ & $4815$ & $6420$ & $9095$ \\
\hline
$18$ & $-$ & $-$ & $-$ & $-$ & $-$ & $-$ & $784$ & $784$ & $2352$ & $3136$ & $3920$ & $7056$ & $9408$ \\
\hline
$19$ & $-$ & $-$ & $-$ & $-$ & $-$ & $-$ & $-$ & $1149$ & $1149$ & $3447$ & $4596$ & $5745$ & $10341$ \\
\hline
$20$ & $-$ & $-$ & $-$ & $-$ & $-$ & $-$ & $-$ & $-$ & $1684$ & $1684$ & $5052$ & $6736$ & $8420$ \\
\hline
$21$ & $-$ & $-$ & $-$ & $-$ & $-$ & $-$ & $-$ & $-$ & $-$ & $2468$ & $2468$ & $7404$ & $9872$ \\
\hline
$22$ & $-$ & $-$ & $-$ & $-$ & $-$ & $-$ & $-$ & $-$ & $-$ & $-$ & $3617$ & $3617$ & $10851$ \\
\hline
\end{tabular}
\caption{For $13\leq n\leq 25$, row 2 lists the values of $\tilde{\mu}(P_n)$, whereas row $r$ lists the values of $\tilde{\mu}(L' + F')$ for $3 \leq r \leq 22$ and $F' \in \mathcal{F}_{n-r}$.}
\label{tab: tabla1}
\end{table}

Now, suppose that $n>25$, and let $L_j$ be a connected component of $L$ with the fewest vertices. Note that $\tilde{\mu}(L_j)=\tilde{\mu}(P_{n_j})$. Then the linear forest $L-L_j=\sum_{i\neq j} L_i$ has $n-n_j\geq 13$ vertices; otherwise, if $n-n_j\leq 12$, then $n_j=n-(n-n_j)\geq 26-12=14$, contradicting the minimality of $n_j$. Thus, by the inductive hypothesis, $\tilde{\mu}(L-L_j)<\tilde{\mu}(P_{n-n_j})$, and then
\begin{align*}
     \tilde{\mu}(L)&=\tilde{\mu}(L_j)\cdot \tilde{\mu}(L-L_j) \\
     &< \tilde{\mu}(P_{n_j}) \tilde{\mu}(P_{n-n_j}) \\
     &=\tilde{\mu}(P_{n_j}) \, (\tilde{\mu}(P_{n-n_j-1})+\tilde{\mu}(P_{n-n_j-3})) \\
     &=\tilde{\mu}(P_{n_j}) \, \tilde{\mu}(P_{n-n_j-1}) + \tilde{\mu}(P_{n_j}) \, \tilde{\mu}(P_{n-n_j-3}) \\
     &=\tilde{\mu}(P_{n_j} + P_{n-n_j-1}) + \tilde{\mu}(P_{n_j} + P_{n-n_j-3}) \\
     &< \tilde{\mu}(P_{n-1}) + \tilde{\mu}(P_{n-3}) \\
     &=\tilde{\mu}(P_n)
\end{align*}
\end{proof}

\subsection{Perfect edge dominating sets in chordal graphs}~\label{subsec: pedchordal}

In this subsection, we determine the extremal chordal graphs on \(n\) vertices. The main structural ingredient is a theorem showing that, in an extremal graph containing a triangle, such a triangle must appear as an entire connected component. The proof works both for extremality among all graphs and for extremality restricted to chordal graphs; in the latter case, chordality is used only to ensure that the transformed graphs remain in the same class.

A triangle \(R\) admits exactly two types of valid 3-colorings: either all its vertices are colored black, or one vertex is colored white and the other two are colored yellow. These correspond, respectively, to the trivial PED-set and to a DIM. In the latter case, \(R\) admits exactly three DIMs, each consisting of a single edge. Hence \(\tilde{\mu}(R)=4\).

\begin{lem} \label{lem: chordal_operations}
Let \(G\) be a chordal graph. Then the following graphs are also chordal:
\begin{enumerate}
    \item every induced subgraph of \(G\);
    \item every graph obtained from \(G\) by contracting a connected induced
    subgraph into a single vertex;
    \item the disjoint union of \(G\) with any chordal graph.
\end{enumerate}
\end{lem}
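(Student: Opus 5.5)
The plan is to work directly from the definition of chordality: no induced cycle $C_k$ with $k\geq 4$. Each of the three statements reduces to showing that a hypothetical long induced cycle in the new graph yields one in $G$, or in a component that is already chordal.

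\emph{Statement 1.} Let $H=G[S]$ be an induced subgraph. An induced cycle of $H$ on $k\geq 4$ vertices is also an induced subgraph of $G$, because adjacency between vertices of $S$ is the same in $H$ and in $G$. This contradicts the chordality of $G$.

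\emph{Statement 3.} Every cycle of a disjoint union $G+H$ is connected, so it lies entirely inside one component. An induced $C_k$ with $k\geq 4$ in $G+H$ is therefore an induced $C_k$ in $G$ or in $H$. Both are chordal, so this is impossible.

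\emph{Statement 2.} This is the main case. Let $S\subseteq V(G)$ induce a connected subgraph and set $G^*:=G_{S,v}$. Suppose $C$ is an induced cycle of $G^*$ with $k\geq 4$ vertices.

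If $v\notin V(C)$, then $C$ is an induced cycle of $G-S$. By Statement 1 it is an induced cycle of $G$, a contradiction.

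If $v\in V(C)$, write $C=v x_1 x_2\cdots x_{k-1} v$. Then $x_1$ and $x_{k-1}$ lie in $N_G(S)$, while $x_2,\dots,x_{k-2}$ have no neighbours in $S$, since $C$ has no chords at $v$. Choose $a\in S\cap N_G(x_1)$ and $b\in S\cap N_G(x_{k-1})$, and among all paths from $a$ to $b$ inside $G[S]$ take a shortest one, $a=s_0,\dots,s_m=b$. Then $s_0,\dots,s_m,x_{k-1},\dots,x_1$ forms a closed walk.

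I would refine the choice to make this walk an induced cycle. Take $i$ maximal such that $s_i$ is adjacent to $x_1$, then $j\geq i$ minimal such that $s_j$ is adjacent to $x_{k-1}$, and use the cycle $x_1 s_i\cdots s_j x_{k-1}\cdots x_2 x_1$. The checks are as follows:
\begin{itemize}
\item The segment $s_i\cdots s_j$ is chordless, because it is a subpath of a shortest path.
\item There are no chords between the $s$'s and $x_2,\dots,x_{k-2}$, because those vertices have no neighbours in $S$.
\item The edge $x_1x_{k-1}$ is absent, because $C$ is induced and $k\geq 4$.
\item The only remaining possible chords are from $x_1$ to $s_t$ with $t>i$, or from $x_{k-1}$ to $s_t$ with $t<j$. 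The extremal choices of $i$ and $j$ exclude these.
\end{itemize}
The extremal choices must be made carefully: choose $j$ first as the minimal index adjacent to $x_{k-1}$, then $i\le j$ maximal adjacent to $x_1$. If some $s_t$ with $t<i$ were adjacent to $x_{k-1}$, this would contradict the minimality of $j$. If some $s_t$ with $t>i$ and $t\le j$ were adjacent to $x_1$, this would contradict the maximality of $i$. If $i=j$, the cycle is $x_1 s_i x_{k-1}\cdots x_2$.

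In every case the resulting cycle is an induced cycle of $G$ with at least $k\geq 4$ vertices, contradicting chordality. The step I expect to be the main obstacle is this bookkeeping of extremal indices, which is needed to guarantee that the lifted cycle is chordless.
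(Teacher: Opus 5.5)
Your proof is correct, and for Statement~2 it takes a different route from the paper. The paper calls Statements~1 and~3 immediate, as you also find. For Statement~2 it observes that contracting a connected induced subgraph $G[S]$ amounts to successively contracting the edges of a spanning tree of $G[S]$. It then invokes the standard fact that chordal graphs are closed under edge contraction.

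You instead lift an induced cycle through $v$ directly back to $G$. You splice in a segment of a shortest $a$--$b$ path in $G[S]$, which is automatically chordless, and you trim it at the extremal indices $i,j$ so that $x_1$ and $x_{k-1}$ create no chords.

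The paper's argument is shorter, but it depends on an outside fact and on the easy identification of iterated edge contractions with $G_{S,v}$. Yours is self-contained and treats an arbitrary connected $S$ in one step. It also contains the proof of that outside fact as the special case $|S|=2$.

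One small slip in your last paragraph concerns the chords at $x_{k-1}$. The ones to exclude are to $s_t$ with $i\le t<j$, not $t<i$, since vertices $s_t$ with $t<i$ are not on the cycle. Minimality of $j$ does rule these out, so nothing breaks. Your worry about the order of choosing $i$ and $j$ is also unnecessary. Taking $i$ maximal over the whole path and then $j\ge i$ minimal works just as well, as your first bullet list already shows.
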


\begin{proof}
The first and third assertions are immediate. For the second one, note that
contracting a connected induced subgraph can be obtained by successively
contracting the edges of a spanning tree of it. Since chordal graphs are
closed under edge contraction, the resulting graph is chordal.
\end{proof}

\begin{rmk} \label{rmk: contraction_recoloring}
We shall use the following recoloring convention after contractions. Suppose
\(X\subseteq V(G)\) induces a connected subgraph, and that, for the
valid 3-colorings under consideration, no vertex of \(X\) is colored white and
no white vertex outside \(X\) has a neighbor in \(X\). Let \(G_{X, x}\) be the
graph obtained from \(G\) by contracting \(X\) into a single vertex \(x\).

Given one such valid 3-coloring of \(G\), we define a coloring of \(G_{X, x}\) as
follows. The contracted vertex \(x\) is colored white, yellow, or black
according as \(d_{G_{X, x}}(x)\) is \(0\), \(1\), or at least \(2\). The colors of
the remaining vertices are kept unchanged, except for the following possible
adjustment: if a vertex \(u\in V(G)\setminus X\) was colored black and has
degree \(1\) in \(G_{X, x}\), then \(u\) is recolored yellow.

The resulting coloring is a valid 3-coloring of \(G_{X, x}\). Indeed, no white
vertex outside \(X\) becomes adjacent to \(x\), by assumption. A yellow vertex
outside \(X\) remains a leaf of \(G_{X, x}-W\), since its non-white neighbors in
\(X\), if any, are contracted into the single vertex \(x\). Finally, if a
black vertex \(u\) becomes of degree \(1\) in \(G_{X, x}\), then all but possibly
one of its neighbors in \(G\) belonged to \(X\), and recoloring \(u\) yellow
does not affect the validity of the coloring on the other vertices. This
adjustment is determined by \(G\) and by \(X\), and therefore it preserves the
injectivity of the maps used below.
\end{rmk}

\begin{theorem} \label{thm: triangle}
Let \(\mathcal C\) be either the class of all graphs or the class of chordal
graphs. Let \(G\) be an extremal graph on \(n\) vertices in \(\mathcal C\).
If \(H\) is a connected component of \(G\) containing a triangle \(R\), then
\(H=R\).
\end{theorem}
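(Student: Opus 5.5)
We argue by contradiction with an exchange argument. Suppose \(G\) is extremal in \(\mathcal C\), \(H\) is a component of \(G\) containing a triangle \(R=\{a,b,c\}\), and \(H\neq R\). We build a graph \(G^*\in\mathcal C\) on \(n\) vertices with \(\tilde{\mu}(G^*)>\tilde{\mu}(G)\). The natural candidate is \(G^*=(G-V(H))+R+H'\), where \(H'\) is obtained from \(H\) by contracting \(R\) and then adding two new vertices. By Remark~\ref{rmk: connected components} it then suffices to show \(\tilde{\mu}(H)<4\,\tilde{\mu}(H')\) for a suitable \(H'\) on \(|V(H)|-3\) vertices (or on \(|V(H)|\) vertices, if we pad with a separate piece). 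Chordality of \(H'\) and of the union follows from Lemma~\ref{lem: chordal_operations}. The cleanest choice is probably \(H'=H_{R,x}\), contracting \(R\) into a single vertex \(x\), with two isolated vertices added to keep the vertex count. Since isolated vertices contribute a factor \(1\), the target inequality becomes
\[
\tilde{\mu}(H)<4\,\tilde{\mu}(H_{R,x}).
\]

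To prove this, we split the valid 3-colorings of \(H\) by their restriction to \(R\). In any valid 3-coloring, at most one vertex of \(R\) is white, since \(W\) is independent. The cases are:
\begin{enumerate}
\item[(i)] all of \(a,b,c\) are non-white;
\item[(ii)] exactly one vertex, say \(a\), is white, so \(b,c\) are yellow and adjacent; then \(b,c\) have no other non-white neighbours, and every other neighbour of \(a\) is yellow.
\end{enumerate}
In case (ii), the yellow vertex \(b\) forces all its neighbours outside \(R\) to be white. Hence removing \(R\) leaves a configuration that maps injectively into valid colorings of \(H_{R,x}\) with \(x\) white, or into colorings of the form counted by \(\varphi\). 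There are three choices for the white vertex, so case (ii) contributes at most \(3\) times a count of colorings of \(H_{R,x}\) in which \(x\) is white or \(x\) is yellow with white neighbourhood.

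In case (i), within \(R\) each of \(a,b,c\) has two non-white neighbours, so all three are black. Moreover, no white vertex outside \(R\) is adjacent to \(R\), since black vertices have no white neighbours. Remark~\ref{rmk: contraction_recoloring} therefore applies with \(X=V(R)\) and gives an injection into valid colorings of \(H_{R,x}\) with \(x\) non-white. Summing the two cases gives \(\tilde{\mu}(H)\le \tilde{\mu}(H_{R,x})+3\,\tilde{\mu}(H_{R,x})=4\,\tilde{\mu}(H_{R,x})\), provided the images in case (ii) are controlled by \(\tilde{\mu}(H_{R,x})\).

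The main obstacle is twofold. First, case (ii) must really inject into \(H_{R,x}\): the configuration ``\(x\) white'' needs all neighbours of \(x\) yellow, but neighbours of \(b,c\) are white, which conflicts. So I would instead map case (ii) to colorings of \(H_{R,x}\) where \(x\) is yellow and all its other neighbours are white, and check validity carefully, possibly using a separate injection for each of the three choices of white vertex. Second, strictness is needed. Since \(H\neq R\) and \(H\) is connected, some vertex \(y\) outside \(R\) is adjacent to \(R\). I would exhibit a coloring of \(H_{R,x}\) not in the image of the case (ii) maps for some choice, or show that one of the three injections is not surjective, for instance via the trivial PED-set of \(H_{R,x}\) when it is not hit. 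If the contraction approach leaks, the fallback is to handle a vertex of \(R\) with neighbours outside \(R\) by deleting \(R\) entirely and comparing \(\tilde{\mu}(H)\) with \(4\,\tilde{\mu}(H-V(R))\) using Remark~\ref{rmk: vertex color}.
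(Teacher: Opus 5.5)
There is a genuine gap, and it starts with the vertex count. Contracting the triangle $R$ into $x$ removes only two vertices, so $R+H_{R,x}$ already has $|V(H)|+1$ vertices. Adding two isolated vertices makes it $|V(H)|+3$. Either way your $G^*$ has more than $n$ vertices, and a larger graph with more PED-sets says nothing about the extremality of $G$.

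To make room for the fresh triangle you must remove three vertices from $H$, so you have to contract or delete something beyond $R$. This is why the paper picks $s\in V(H)\setminus V(R)$ maximizing $d_R(s)$ and contracts $V(R)\cup\{s\}$. Whether that contraction respects a given coloring depends on the color of $s$ and on $d_R(s)$. That dependence produces the paper's case split: $K_4$, diamond, and paw with $d_H(s)=1$ or $d_H(s)\geq 2$.

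Even ignoring the count, your case (ii) does not map into $H_{R,x}$. Suppose the white vertex $a$ has a neighbor $y$ outside $R$; then $y$ is yellow with its non-white neighbor elsewhere. Suppose also that $b$ or $c$ has a neighbor $w$ outside $R$; then $w$ is white. Keeping the colors outside $R$, the vertex $x$ has no valid color: white clashes with $w$, and a non-white $x$ gives $y$ a second non-white neighbor. Your alternative target ``$x$ yellow with all other neighbors white'' fails for the same reason, since the neighbors of $a$ are yellow. In any case, colorings counted by $\varphi_x(H_{R,x})$ are not PED-sets of $H_{R,x}$.

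Your fallback, deleting $V(R)$, has the right vertex count but the opposite defect. An all-black coloring does not restrict to $H-V(R)$: a yellow vertex whose unique non-white neighbor lay in $R$ becomes isolated in $H-V(R)-W$. In general it cannot be recolored white, because its remaining neighbors are white.

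The missing idea is that no single competitor preserves all colorings. The paper partitions $\tilde{\mu}(H)$ into a few types according to the coloring of $V(R)\cup\{s\}$. It then observes that $\tilde{\mu}(H)$ is less than four times the largest (suitably grouped) type count, and chooses a competitor that preserves that dominant type:
\begin{itemize}
\item contraction of $V(R)\cup\{s\}$ for the all-black types;
\item deletion of $V(R)$ for the types with a white vertex in $R$;
\item for the type in which $s$ is yellow with all outside neighbors white, contraction of $R$ together with deletion of $s$ and $N_H(s)\setminus V(R)$, padded with triangles.
\end{itemize}
In each case the new triangle supplies the factor $4$ that absorbs the discarded types.
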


\begin{proof}
Let \(V(R)=\{v_1,v_2,v_3\}\). Since \(H\) is a connected component of \(G\),
Remark~\ref{rmk: connected components} gives
\[
    \tilde{\mu}(G)=\tilde{\mu}(H)\tilde{\mu}(G-V(H)).
\]
Suppose, for a contradiction, that \(H\neq R\).

We shall construct, in each case, a graph \(G'\) on \(n\) vertices with
\(\tilde{\mu}(G')>\tilde{\mu}(G)\). If \(\mathcal C\) is the class of all
graphs, then \(G'\in\mathcal C\) is automatic. If \(\mathcal C\) is the class
of chordal graphs, then \(G'\in\mathcal C\) follows from
Lemma~\ref{lem: chordal_operations}, since the constructions below only use
induced subgraphs, contractions of connected induced subgraphs, and disjoint
unions with chordal graphs. Thus, in both settings, such a graph \(G'\) is a
valid competitor, contradicting the extremality of \(G\).

If no vertex of \(V(H)\setminus V(R)\) is adjacent to a vertex of \(R\), then
\(R\) is a connected component of \(H\), and therefore of \(G\), contrary to
the connectedness of \(H\) and to \(H\neq R\). Hence there must exist a
vertex of \(V(H)\setminus V(R)\) with at least one neighbor in \(V(R)\).
Choose a vertex \(s\in V(H)\setminus V(R)\) maximizing \(d_R(s)\).

We distinguish three cases according to the value of \(d_R(s)\).

\medskip

\noindent\textbf{Case 1: \(d_R(s)=3\).}
Then \(V(R)\cup\{s\}\) induces a clique on four vertices in \(H\). In every
valid 3-coloring of \(H\), all vertices of \(V(R)\cup\{s\}\) are colored
black. Let \(H^*\) be the graph obtained from \(H\) by contracting
\(V(R)\cup\{s\}\) into a single vertex. By
Remark~\ref{rmk: contraction_recoloring}, every valid 3-coloring of \(H\)
induces a valid 3-coloring of \(H^*\). Therefore,
\[
    \tilde{\mu}(H)\leq \tilde{\mu}(H^*).
\]
Let \(R'\) be a new triangle, disjoint from \(H^*\) and from \(G-V(H)\), and
define
\[
    G'=R' + H^* + (G-V(H)).
\]
Then \(G'\) is a graph on \(n\) vertices in \(\mathcal C\), and
\[
    \tilde{\mu}(G')
    =4\tilde{\mu}(H^*)\tilde{\mu}(G-V(H))
    \geq 4\tilde{\mu}(H)\tilde{\mu}(G-V(H))
    >\tilde{\mu}(G),
\]
a contradiction.

\medskip

\noindent\textbf{Case 2: \(d_R(s)=2\).}
Without loss of generality, assume that
\[
    N_H(s)\cap V(R)=\{v_1,v_2\}.
\]
Then \(V(R)\cup\{s\}\) induces a diamond in \(H\). In a valid 3-coloring of
this diamond, there are only two possible patterns:
\begin{enumerate}
    \item all vertices of \(V(R)\cup\{s\}\) are colored black;
    \item \(s\) and \(v_3\) are colored white, while \(v_1\) and \(v_2\)
    are colored yellow.
\end{enumerate}
Thus
\[
    \tilde{\mu}(H)=\tilde{\mu}(H, s, \mathsf{b})
    +\tilde{\mu}(H, s, \mathsf{w}).
\]

If \(\tilde{\mu}(H, s, \mathsf{b})>\tilde{\mu}(H, s, \mathsf{w})\), then
\[
    \tilde{\mu}(H)<2\tilde{\mu}(H, s, \mathsf{b}).
\]
Let \(H^*\) be the graph obtained from \(H\) by contracting
\(V(R)\cup\{s\}\) into a single vertex. By
Remark~\ref{rmk: contraction_recoloring}, the valid 3-colorings counted by
\(\tilde{\mu}(H, s, \mathsf{b})\) induce valid 3-colorings of \(H^*\). Hence
\[
    \tilde{\mu}(H, s, \mathsf{b})\leq \tilde{\mu}(H^*).
\]
Adding a new disjoint triangle \(R'\), and setting
\[
    G'=R' + H^* + (G-V(H)),
\]
we obtain a graph \(G'\) on \(n\) vertices in \(\mathcal C\) such that
\[
    \tilde{\mu}(G')
    =4\tilde{\mu}(H^*)\tilde{\mu}(G-V(H))
    \geq 4\tilde{\mu}(H, s, \mathsf{b})\tilde{\mu}(G-V(H))
    >\tilde{\mu}(H)\tilde{\mu}(G-V(H))
    =\tilde{\mu}(G),
\]
a contradiction.

Assume now that
\[
    \tilde{\mu}(H, s, \mathsf{b})\leq \tilde{\mu}(H, s, \mathsf{w}).
\]
Then
\[
    \tilde{\mu}(H)\leq 2\tilde{\mu}(H, s, \mathsf{w}).
\]
If a valid 3-coloring of \(H\) is counted by
\(\tilde{\mu}(H, s, \mathsf{w})\), then its restriction to \(H-V(R)\) is a
valid 3-coloring of \(H-V(R)\). Indeed, \(s\) and \(v_3\) are white, and
\(v_1\) and \(v_2\) are yellow; since \(v_1v_2\in E(H)\), every neighbor
outside \(R\) of \(v_1\) or \(v_2\) must be white. Thus deleting \(R\) does
not destroy the validity of the coloring on the remaining vertices. Moreover,
this restriction is injective on the colorings counted by
\(\tilde{\mu}(H, s, \mathsf{w})\). Therefore
\[
    \tilde{\mu}(H, s, \mathsf{w})\leq \tilde{\mu}(H-V(R)).
\]
Let \(R'\) be a new triangle, and define
\[
    G'=R' + (H-V(R)) + (G-V(H)).
\]
Then \(G'\) is a graph on \(n\) vertices in \(\mathcal C\), and
\[
    \tilde{\mu}(G')
    =4\tilde{\mu}(H-V(R))\tilde{\mu}(G-V(H))
    \geq 4\tilde{\mu}(H, s, \mathsf{w})\tilde{\mu}(G-V(H))
    \geq 2\tilde{\mu}(H)\tilde{\mu}(G-V(H))
    >\tilde{\mu}(G),
\]
again a contradiction.

\medskip

\noindent\textbf{Case 3: \(d_R(s)=1\).}
Without loss of generality, assume that
\[
    N_H(s)\cap V(R)=\{v_1\}.
\]
Then \(V(R)\cup\{s\}\) induces a paw in \(H\). Since \(s\) was chosen
maximizing \(d_R(s)\), every vertex of \(V(H)\setminus V(R)\) has at most
one neighbor in \(R\). We analyze two cases: \(d_H(s)=1\), and
\(d_H(s)\geq 2\).

First suppose that \(d_H(s)=1\). Then every valid 3-coloring of
\(H[V(R)\cup\{s\}]\) falls into exactly one of the following three types:
\begin{enumerate}[(i)]
    \item \(v_1,v_2,v_3\) are colored black, and \(s\) is colored yellow;
    \item \(v_1,v_2\) are colored yellow, and \(v_3,s\) are colored white;
    \item \(v_1,v_3\) are colored yellow, and \(v_2,s\) are colored white.
\end{enumerate}
Let \(\alpha_1,\alpha_2,\alpha_3\) denote the numbers of valid 3-colorings of
\(H\) of these three types, respectively, and set
\[
    \alpha_{\max}:=\max\{\alpha_2,\alpha_3\}.
\]
Thus
\[
    \tilde{\mu}(H)=\alpha_1+\alpha_2+\alpha_3
    \leq \alpha_1+2\alpha_{\max}.
\]

If \(\alpha_1\geq \alpha_{\max}\), then
\[
    \tilde{\mu}(H)\leq \alpha_1+2\alpha_{\max}
    \leq 3\alpha_1<4\alpha_1.
\]
Let \(H^*\) be the graph obtained from \(H\) by contracting
\(V(R)\cup\{s\}\) into a single vertex. The valid 3-colorings counted by
\(\alpha_1\) induce valid 3-colorings of \(H^*\). Indeed, the only vertex of
\(V(R)\cup\{s\}\) that is not black is the leaf \(s\), and \(s\) has no
neighbor outside \(R\) because \(d_H(s)=1\). Thus the recoloring convention
of Remark~\ref{rmk: contraction_recoloring} applies. Consequently,
\[
    \alpha_1\leq \tilde{\mu}(H^*).
\]
Adding a new disjoint triangle \(R'\), we obtain
\[
    G'=R' + H^* + (G-V(H)).
\]
Then \(G'\) is a graph on \(n\) vertices in \(\mathcal C\), and
\[
    \tilde{\mu}(G')
    =4\tilde{\mu}(H^*)\tilde{\mu}(G-V(H))
    \geq 4\alpha_1\tilde{\mu}(G-V(H))
    >\tilde{\mu}(H)\tilde{\mu}(G-V(H))
    =\tilde{\mu}(G),
\]
a contradiction.

Assume now that \(\alpha_{\max}>\alpha_1\). Without loss of generality,
suppose first that \(\alpha_{\max}=\alpha_2\). The restriction to
\(H-V(R)\) of every valid 3-coloring counted by \(\alpha_2\) is a valid
3-coloring of \(H-V(R)\), and this restriction is injective on the colorings
counted by \(\alpha_2\). Hence
\[
    \alpha_{\max}=\alpha_2\leq \tilde{\mu}(H-V(R)).
\]
The case \(\alpha_{\max}=\alpha_3\) is symmetric. Letting
\[
    G'=R' + (H-V(R)) + (G-V(H)),
\]
where \(R'\) is a new triangle, we obtain a graph \(G'\) on \(n\) vertices in
\(\mathcal C\), and
\[
    \tilde{\mu}(G')
    =4\tilde{\mu}(H-V(R))\tilde{\mu}(G-V(H))
    \geq 4\alpha_{\max}\tilde{\mu}(G-V(H)).
\]
Since \(\alpha_{\max}>\alpha_1\), we have
\[
    \tilde{\mu}(H)=\alpha_1+\alpha_2+\alpha_3
    \leq \alpha_1+2\alpha_{\max}
    <3\alpha_{\max}<4\alpha_{\max},
\]
and therefore \(\tilde{\mu}(G')>\tilde{\mu}(G)\), again a contradiction.

It remains to consider the case \(d_H(s)\geq 2\). In this case every valid
3-coloring of \(H[V(R)\cup\{s\}]\) falls into exactly one of the following
five types:
\begin{enumerate}[(a)]
    \item \(v_1,v_2,v_3\) and \(s\) are colored black;
    \item \(v_1,v_2\) are colored yellow, and \(v_3,s\) are colored white;
    \item \(v_1,v_3\) are colored yellow, and \(v_2,s\) are colored white;
    \item \(v_1\) is colored white, while \(v_2,v_3\), and \(s\) are
    colored yellow;
    \item \(v_1,v_2,v_3\) are colored black, \(s\) is colored yellow, and
    all vertices in \(N_H(s)\setminus V(R)\) are colored white.
\end{enumerate}
In type (d), since \(s\) is yellow and \(v_1\) is white, exactly one vertex
of \(N_H(s)\setminus V(R)\) is colored non-white, and all remaining vertices
of \(N_H(s)\setminus V(R)\) are colored white.

Let \(\beta_1,\beta_2,\beta_3,\beta_4,\beta_5\) denote the numbers of valid
3-colorings of \(H\) of the corresponding types, and set
\[
    \beta_{\max}:=\max\{\beta_2,\beta_3\}.
\]
Then
\[
    \tilde{\mu}(H)
    =\beta_1+\beta_2+\beta_3+\beta_4+\beta_5
    \leq \beta_1+\beta_5+2\beta_{\max}+\beta_4.
\]

Suppose first that
\[
    2(\beta_{\max}+\beta_4)\geq \beta_1+\beta_5.
\]
Consider the graph \(H-V(R)\). The restrictions to \(H-V(R)\) of the valid
3-colorings counted by the larger of \(\beta_2\) and \(\beta_3\), together
with those counted by \(\beta_4\), are valid 3-colorings of \(H-V(R)\).
Moreover, these restrictions are injective within each family, and the two
families are disjoint because \(s\) is colored white in types (b) and (c),
whereas \(s\) is colored yellow in type (d). Hence
\[
    \beta_{\max}+\beta_4\leq \tilde{\mu}(H-V(R)).
\]
Let \(R'\) be a new triangle and define
\[
    G'=R' + (H-V(R)) + (G-V(H)).
\]
Then \(G'\) is a graph on \(n\) vertices in \(\mathcal C\). If
\(\beta_4>0\), then
\[
    \tilde{\mu}(H)
    \leq \beta_1+\beta_5+2\beta_{\max}+\beta_4
    \leq 4\beta_{\max}+3\beta_4
    <4(\beta_{\max}+\beta_4).
\]
Therefore,
\[
    \tilde{\mu}(G')
    =4\tilde{\mu}(H-V(R))\tilde{\mu}(G-V(H))
    \geq 4(\beta_{\max}+\beta_4)\tilde{\mu}(G-V(H))
    >\tilde{\mu}(G).
\]
If \(\beta_4=0\), then the valid 3-coloring of \(H-V(R)\) associated with
the trivial PED-set is not among the restrictions counted by
\(\beta_{\max}\), because in those restrictions the vertex \(s\) is colored
white, while \(s\) has positive degree in \(H-V(R)\). Hence
\[
    \tilde{\mu}(H-V(R))\geq \beta_{\max}+1,
\]
and
\[
    \tilde{\mu}(G')
    =4\tilde{\mu}(H-V(R))\tilde{\mu}(G-V(H))
    \geq 4(\beta_{\max}+1)\tilde{\mu}(G-V(H)).
\]
Since \(2\beta_{\max}\geq \beta_1+\beta_5\) and
\(\beta_2+\beta_3\leq 2\beta_{\max}\), we have
\[
    \tilde{\mu}(H)
    =\beta_1+\beta_2+\beta_3+\beta_5
    \leq \beta_1+\beta_5+2\beta_{\max}
    \leq 4\beta_{\max}
    <4(\beta_{\max}+1).
\]
Thus \(\tilde{\mu}(G')>\tilde{\mu}(G)\) in this case as well, a
contradiction.

We may therefore assume that
\[
    \beta_1+\beta_5>2(\beta_{\max}+\beta_4).
\]
If \(\beta_1\geq \beta_5\), then, by
Remark~\ref{rmk: contraction_recoloring}, the valid 3-colorings counted by
\(\beta_1\) induce valid 3-colorings of the graph \(H^*\) obtained from
\(H\) by contracting \(V(R)\cup\{s\}\) into a single vertex. Hence
\[
    \beta_1\leq \tilde{\mu}(H^*).
\]
Adding a new disjoint triangle \(R'\), we obtain
\[
    G'=R' + H^* + (G-V(H)).
\]
This is a graph on \(n\) vertices in \(\mathcal C\). Since
\(\beta_1\geq \beta_5\) and
\(\beta_1+\beta_5>2(\beta_{\max}+\beta_4)\), we have
\[
    4\beta_1
    \geq 2(\beta_1+\beta_5)
    > \beta_1+\beta_5+2(\beta_{\max}+\beta_4)
    \geq \beta_1+\beta_5+\beta_2+\beta_3+\beta_4
    =\tilde{\mu}(H).
\]
Therefore,
\[
    \tilde{\mu}(G')
    =4\tilde{\mu}(H^*)\tilde{\mu}(G-V(H))
    \geq 4\beta_1\tilde{\mu}(G-V(H))
    >\tilde{\mu}(G),
\]
a contradiction.

Finally, assume that \(\beta_5>\beta_1\). Since
\(\beta_1+\beta_5>2(\beta_{\max}+\beta_4)\), we have
\[
    \beta_{\max}+\beta_4<\beta_5.
\]
Thus
\[
    \tilde{\mu}(H)
    =\beta_1+\beta_2+\beta_3+\beta_4+\beta_5
    \leq \beta_1+2\beta_{\max}+\beta_4+\beta_5
    <4\beta_5.
\]
Let
\[
    k=|N_H(s)\setminus V(R)|.
\]
Since \(d_H(s)\geq 2\) and \(N_H(s)\cap V(R)=\{v_1\}\), we have \(k\geq 1\).
Contract \(V(R)\) into a single vertex \(z\), and then delete the vertices
\[
    \{s\}\cup (N_H(s)\setminus V(R)).
\]
Let \(H_0\) be the resulting graph. This operation reduces the number of
vertices by \(k+3\): two vertices are lost when \(V(R)\) is contracted, and
then \(s\), together with the \(k\) vertices of \(N_H(s)\setminus V(R)\), is
deleted. Since \(k+3\geq 4\), let \(F\) be the disjoint union of one triangle
and, if necessary, further disjoint triangles and at most one \(K_1\) or one
\(K_2\), so that
\[
    |V(F)|=k+3.
\]
Then \(\tilde{\mu}(F)\geq 4\).

Every valid 3-coloring counted by \(\beta_5\) induces a valid 3-coloring of
\(H_0\). Indeed, in type (e), all vertices of \(V(R)\) are colored black, the
vertex \(s\) is colored yellow, and all vertices in
\(N_H(s)\setminus V(R)\) are colored white. After contracting \(V(R)\), we
delete \(s\) and the white vertices in \(N_H(s)\setminus V(R)\). This
deletion does not affect the validity of the coloring on the remaining
vertices, except possibly at the contracted vertex \(z\); if necessary, \(z\)
is recolored according to its degree in \(H_0\). Thus the resulting coloring
is valid. Moreover, the map is injective on the colorings counted by
\(\beta_5\). Hence
\[
    \beta_5\leq \tilde{\mu}(H_0).
\]
Now define
\[
    G'=F+H_0+(G-V(H)).
\]
Then \(G'\) is a graph on \(n\) vertices in \(\mathcal C\), and
\[
    \tilde{\mu}(G')
    =\tilde{\mu}(F)\tilde{\mu}(H_0)\tilde{\mu}(G-V(H))
    \geq 4\beta_5\tilde{\mu}(G-V(H))
    >\tilde{\mu}(H)\tilde{\mu}(G-V(H))
    =\tilde{\mu}(G),
\]
which is the final contradiction.

All cases lead to a contradiction. Hence \(H=R\), as desired.
\end{proof}

\begin{rmk}
Theorem~\ref{thm: triangle} applies in particular when extremality is taken among all graphs on \(n\) vertices. In the sequel, we use the chordal version. The only additional point needed in the chordal setting is that each transformed graph remains chordal, which follows from Lemma~\ref{lem: chordal_operations}.
\end{rmk}

\begin{cor} \label{cor: decomposition chordal}
Let \(G\) be an extremal chordal graph on \(n\) vertices. Then \(G = G_1 + G_2\), where \(G_1\) is a disjoint union of triangles and \(G_2\) is a forest, with either \(G_1\) or \(G_2\) possibly empty.
\end{cor}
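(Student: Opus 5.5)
The plan is to split the connected components of \(G\) according to whether they contain a triangle, and then apply Theorem~\ref{thm: triangle} to the first kind and chordality to the second. Let \(H\) be an arbitrary connected component of \(G\). Since \(G\) is an extremal graph on \(n\) vertices in the class of chordal graphs, Theorem~\ref{thm: triangle} (with \(\mathcal C\) the class of chordal graphs) applies directly.

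If \(H\) contains a triangle \(R\), Theorem~\ref{thm: triangle} gives \(H=R\), so \(H\) is itself a triangle. If \(H\) contains no triangle, we use that \(H\) is chordal, being an induced subgraph of \(G\) (Lemma~\ref{lem: chordal_operations}). We then show that \(H\) is acyclic. Suppose \(H\) contained a cycle. Take a shortest cycle \(C\) in \(H\). A shortest cycle is necessarily induced, because any chord would produce a strictly shorter cycle. Since \(H\) is triangle-free, \(C\) has at least four vertices. So \(C\) would be an induced cycle on at least four vertices, contradicting chordality. Hence \(H\) is a tree.

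Finally, let \(G_1\) be the union of the components that are triangles and \(G_2\) the union of the remaining components. Each remaining component is a tree by the previous paragraph, so \(G_2\) is a forest. This gives \(G=G_1+G_2\), where either part may be empty.

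There is no real obstacle here. The only step needing care is the shortest-cycle argument, which shows that a triangle-free chordal graph is a forest.
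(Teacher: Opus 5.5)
Your proof is correct and follows the paper's reasoning. Each component containing a triangle is that triangle by Theorem~\ref{thm: triangle}, applied to the chordal class, and the remaining components are triangle-free chordal graphs and hence trees. Your shortest-cycle argument makes explicit the step ``triangle-free chordal implies forest,'' which the paper asserts without proof in the remark following the corollary.
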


The preceding corollary reduces the extremal problem for chordal graphs to
the following situation. Every triangular component of an extremal chordal
graph is an isolated \(K_3\), which contributes a multiplicative factor
\(\tilde{\mu}(K_3)=4\). The remaining vertices induce a chordal
triangle-free graph, and hence a forest. Thus the extremal chordal graphs are
obtained by balancing isolated triangles with the extremal forests determined
in the previous subsection.

\begin{theorem} \label{thm: chordal_extremal}
If \(G\) is a chordal graph on \(n\) vertices, then \(\tilde{\mu}(G)\leq f(n)\), where
\[
f(n) = \begin{cases}
    1 & \text{if } n\leq 2, \\
    4^{\frac{n}{3}} & \text{if } n\geq 3 \text{ and } n\equiv 0 \pmod 3, \\
    4^{\frac{n-1}{3}} & \text{if } n\geq 4 \text{ and } n\equiv 1 \pmod 3, \\
    5\cdot 4^{\frac{n-5}{3}} & \text{if } n\geq 5 \text{ and } n\equiv 2 \pmod 3.
\end{cases}
\]
Furthermore, \(G\) is a chordal graph on \(n\) vertices such that \(\tilde{\mu}(G)=f(n)\) if and only if \(G\in \mathcal G\), where
\begin{eqnarray*}
\mathcal G &=& \{K_1, K_2, 2K_1\} \cup
\left\{ \frac{n}{3}K_3:
n\geq 3 \text{ and } n\equiv 0 \pmod 3 \right\} \cup \\
&& \left\{
\frac{n-4}{3}K_3 + P_4,\,
\frac{n-4}{3}K_3 + K_{1,3},\,
\frac{n-1}{3}K_3 + K_1:
n\geq 4 \text{ and } n\equiv 1 \pmod 3
\right\} \cup \\
&& \left\{
\frac{n-5}{3}K_3 + P_5,\,
\frac{n-5}{3}K_3 + K_{1,4}:
n\geq 5 \text{ and } n\equiv 2 \pmod 3
\right\}.
\end{eqnarray*}
\end{theorem}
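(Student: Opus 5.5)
By Corollary~\ref{cor: decomposition chordal}, an extremal chordal graph $G$ on $n$ vertices has the form $G=tK_3+F$, where $F$ is a forest on $m=n-3t$ vertices. Remark~\ref{rmk: connected components} then gives $\tilde{\mu}(G)=4^t\tilde{\mu}(F)$. For fixed $t$ this is maximized by taking $F\in\mathcal F_m$, where $\mathcal F_0$ is the empty graph, $\mathcal F_1=\{K_1\}$ and $\mathcal F_2=\{K_2,2K_1\}$. So I will prove
\[
\max_{0\le t\le n/3} 4^t\,\tilde{\mu}(F_{n-3t})=f(n),\qquad F_{m}\in\mathcal F_m,
\]
and then identify every pair $(t,F)$ attaining this maximum. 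Since any chordal graph is bounded by an extremal one, this yields both the inequality and the characterization.

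\textbf{Key comparison.} I will show that $\tilde{\mu}(F_m)\le 4^{m/3}$ for every $m$, with sharper statements in two cases:
\begin{itemize}
\item $\tilde{\mu}(F_m)<4\,\tilde{\mu}(F_{m-3})$ for all $m\ge 6$;
\item for small $m$, the ratio can be read off from Remark~\ref{rmk: forests on six-twelve vertices} and the values $\tilde{\mu}(\mathcal F_m)=1,1,1,3,4,5$ for $m=0,\dots,5$.
\end{itemize}
For $6\le m\le 12$ this is immediate from the listed values, e.g.\ $9<4\cdot 4$, $12<4\cdot5$, $17<4\cdot 9$, $27<48$, $37<68$, $54<108$, $81<148$. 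For $m\ge 13$ we have $\mathcal F_m=\{P_m\}$ by Theorem~\ref{thm: maxnumber pedforest}, and I need $\tilde{\mu}(P_m)<4\tilde{\mu}(P_{m-3})$ (or $<4\tilde{\mu}(F_{m-3})$ when $m-3\le 12$). The recurrence gives
\[
\tilde{\mu}(P_m)=\tilde{\mu}(P_{m-1})+\tilde{\mu}(P_{m-3}),
\]
and $\tilde{\mu}(P_{m-1})\le 3\tilde{\mu}(P_{m-3})$ follows by a short induction. In fact the growth ratio of the recurrence is about $1.4656<4^{1/3}\approx1.587$, so the gap only widens. The cases $m\in\{13,14,15\}$, where $F_{m-3}$ may be $2P_3$, $P_{11}$ or $4P_3$, I will check numerically against Table~\ref{tab: tabla1}-type values: $116<4\cdot37$, $170<4\cdot54$, $249<4\cdot81$.

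\textbf{Reducing to small remainders.} The comparison shows that in an extremal $G$ the forest part satisfies $m\le 5$; otherwise replacing $F$ by $K_3+F_{m-3}$ strictly increases $\tilde{\mu}$. It then remains to compare the candidates $m\in\{0,\dots,5\}$ with $m\equiv n\pmod 3$.
\begin{itemize}
\item $n\equiv0$: the options are $m=0$, giving $4^{n/3}$, and $m=3$, giving $4^{(n-3)/3}\cdot3$. So $m=0$ wins uniquely, yielding $\frac n3K_3$.
\item $n\equiv1$: the options are $m=1$ ($K_1$) and $m=4$ ($\tilde{\mu}=4$), both giving $4^{(n-1)/3}$. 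This yields $\frac{n-1}{3}K_3+K_1$ and $\frac{n-4}{3}K_3+\{P_4,K_{1,3}\}$, using $\mathcal F_4=\{P_4,K_{1,3}\}$.
\item $n\equiv2$: the options are $m=2$, giving $4^{(n-2)/3}$, and $m=5$, giving $5\cdot4^{(n-5)/3}$. The latter is larger since $5>4$, yielding $\frac{n-5}{3}K_3+\{P_5,K_{1,4}\}$.
\end{itemize}
For $n\le2$ the only graphs are $K_1$, $K_2$ and $2K_1$, all with value $1$. The case $n=2$ needs care because $K_2$ and $2K_1$ do not fit the formula for $n\ge5$.

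\textbf{Main obstacle.} The delicate part is the equality characterization rather than the bound. I must make sure that every extremal graph really has the form of Corollary~\ref{cor: decomposition chordal} with $F$ itself extremal among forests. This holds because $4^t$ is fixed once $t$ is fixed, and because the inequality in the key comparison is strict, which rules out ties with $m\ge6$. The other point requiring care is checking the exact values of $\mathcal F_m$ for $m\le5$ so that all tying graphs are listed and no others.
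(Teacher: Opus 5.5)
Your proof is correct, but it is organized differently from the paper's.

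\textbf{The paper's route.} It argues by induction on $n$. If an extremal $G_0$ contains a triangle, Theorem~\ref{thm: triangle} splits off a $K_3$ component, and $4f(n-3)=f(n)$ closes the step. If $G_0$ is triangle-free, it is a forest, and the paper shows directly that forest maxima lie strictly below the closed form $f(n)$ for $n\ge 6$. For $6\le n\le 12$ this uses the values in Remark~\ref{rmk: forests on six-twelve vertices}; for $n\ge 13$ it shows $\tilde{\mu}(P_n)<f(n)$ via $f(n-1)+f(n-3)<f(n)$ in each residue class. The equality case is then a second induction.

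\textbf{Your route.} You apply Corollary~\ref{cor: decomposition chordal} once, write $\tilde{\mu}(G)=4^t\tilde{\mu}(F)$, and prove the exchange inequality $\tilde{\mu}(F_m)<4\tilde{\mu}(F_{m-3})$ for $m\ge 6$. This is a comparison of forests with smaller forests rather than with $f$. It forces the forest part to have at most five vertices, leaving two candidates per residue class. The paper must know $f$ in advance and verify it. You instead derive $f(n)$ and the whole family $\mathcal G$ from one finite optimization over $t$. Ties, such as $K_1$ versus $P_4,K_{1,3}$ when $n\equiv 1 \pmod 3$, are visible at once, and no separate induction for equality is needed.

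\textbf{Numerical repairs.} A few details need fixing.
\begin{itemize}
\item Your list $9<4\cdot 4$, $12<4\cdot 5,\dots,81<148$ compares $\tilde{\mu}(F_m)$ with $4\tilde{\mu}(F_{m-2})$, not $4\tilde{\mu}(F_{m-3})$. Since $\tilde{\mu}(F_{m-2})\ge\tilde{\mu}(F_{m-3})$, as written it does not prove your claim. The correct checks for $m=6,\dots,12$ are $9<12$, $12<16$, $17<20$, $27<36$, $37<48$, $54<68$, $81<108$. All hold; the tightest is $m=8$.
\item $\mathcal F_{10}=\{P_{10}\}$, not $2P_3$, although the value $37$ you use is correct.
\item For $m\ge 16$ you need strict inequality to exclude ties with $m\ge 6$ in the characterization. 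The bound $\tilde{\mu}(P_{m-1})\le 3\tilde{\mu}(P_{m-3})$ only gives $\tilde{\mu}(P_m)\le 4\tilde{\mu}(P_{m-3})$. Instead use $\tilde{\mu}(P_{m-1})=\tilde{\mu}(P_{m-3})+\tilde{\mu}(P_{m-4})+\tilde{\mu}(P_{m-5})<3\tilde{\mu}(P_{m-3})$. This follows from the recurrence and strict monotonicity of $\tilde{\mu}(P_k)$ for $k\ge 2$.
\end{itemize}
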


\begin{proof}
Let \(G_0\) be an extremal chordal graph on \(n\) vertices. It suffices to prove that \(\tilde{\mu}(G_0)\leq f(n)\). We proceed by induction on \(n\).

The cases \(1\leq n\leq 5\) are verified directly using Corollary~\ref{cor: decomposition chordal}. Assume that \(n>5\), and that each extremal chordal graph on \(k<n\) vertices has at most \(f(k)\) perfect edge dominating sets.

If \(G_0\) contains a triangle \(R\), then by Theorem~\ref{thm: triangle}, applied to the class of chordal graphs, this triangle is a connected component of \(G_0\). Therefore,
\[
    G_0=R+H_0,
\]
where \(H_0\) is a chordal graph on \(n-3\) vertices. Since \(G_0\) is extremal, \(H_0\) must be an extremal chordal graph on \(n-3\) vertices. Indeed, otherwise replacing \(H_0\) by a chordal graph on \(n-3\) vertices with more PED-sets would increase \(\tilde{\mu}(G_0)\). By the inductive hypothesis,
\[
    \tilde{\mu}(H_0)\leq f(n-3),
\]
and hence
\[
    \tilde{\mu}(G_0)=4\tilde{\mu}(H_0)\leq 4f(n-3)=f(n).
\]

On the other hand, if \(G_0\) contains no triangle, then \(G_0\) is a triangle-free chordal graph, and therefore a forest. For \(6\leq n\leq 12\), the values listed in Remark~\ref{rmk: forests on six-twelve vertices} are strictly smaller than \(f(n)\). For \(n\geq 13\), by Theorem~\ref{thm: maxnumber pedforest}, the maximum among forests on \(n\) vertices is attained by \(P_n\). We now show that \(\tilde{\mu}(P_n)<f(n)\) for every \(n\geq 13\).

By Theorem~\ref{thm: recursive formula}, we have
\[
    \tilde{\mu}(P_{13})=116,\qquad
    \tilde{\mu}(P_{14})=170,\qquad
    \tilde{\mu}(P_{15})=249,
\]
and these values are less than \(f(13)\), \(f(14)\), and \(f(15)\), respectively. Suppose now that \(n>15\), and assume inductively that \(\tilde{\mu}(P_i)<f(i)\) for every \(13\leq i<n\). Using Theorem~\ref{thm: recursive formula}, we obtain
\[
    \tilde{\mu}(P_n)
    =\tilde{\mu}(P_{n-1})+\tilde{\mu}(P_{n-3})
    < f(n-1)+f(n-3).
\]
A direct verification in the three residue classes modulo \(3\) gives
\[
    f(n-1)+f(n-3)<f(n).
\]
For example, if \(n\equiv 0\pmod 3\), then $n-1\equiv 2\pmod 3$ and $n-3\equiv 0\pmod 3$, thus
\[
    f(n-1)=5\cdot 4^{\frac{n}{3}-2}
    \quad\text{and}\quad
    f(n-3)=4^{\frac{n}{3}-1},
\]
and therefore
\[
    f(n-1)+f(n-3)
    =9\cdot 4^{\frac{n}{3}-2}
    <16\cdot 4^{\frac{n}{3}-2}
    =4^{\frac{n}{3}}
    =f(n).
\]
The remaining two cases are analogous. Hence
\[
    \tilde{\mu}(P_n)<f(n)
\]
for all \(n\geq 13\). Therefore, when \(G_0\) is triangle-free and \(n\geq 6\), we have \(\tilde{\mu}(G_0)<f(n)\).

Consequently, in both cases we obtain
\[
    \tilde{\mu}(G_0)\leq f(n).
\]
This proves the upper bound.

We now prove the characterization of equality. Let \(G\) be a chordal graph on \(n\) vertices such that \(\tilde{\mu}(G)=f(n)\). We prove, by induction on \(n\), that \(G\in \mathcal G\).

For \(n=1\), \(G\) is isomorphic to \(K_1\). For \(n=2\), it is isomorphic to \(K_2\) or \(2K_1\). For \(n=3\), equality is attained by \(K_3\). For \(n=4\), equality is attained by \(P_4\), \(K_{1,3}\), and \(K_1+K_3\). For \(n=5\), equality is attained by \(P_5\) and \(K_{1,4}\). Hence \(G\in \mathcal G\) for \(n\leq 5\).

Suppose that \(n>5\), and assume that every chordal graph on \(l<n\) vertices with exactly \(f(l)\) PED-sets belongs to \(\mathcal G\).

We claim that \(G\) must contain a triangle. Otherwise, \(G\) would be a forest on \(n\geq 6\) vertices, and the preceding argument shows that \(\tilde{\mu}(G)<f(n)\), a contradiction. Therefore, \(G\) contains a triangle \(R\). By Theorem~\ref{thm: triangle}, applied to the class of chordal graphs, \(R\) is a connected component of \(G\). Hence
\[
    G=R+(G-V(R)).
\]
Moreover,
\[
    \tilde{\mu}(G)=f(n)
    \quad\text{and}\quad
    \tilde{\mu}(R)=4
\]
imply
\[
    \tilde{\mu}(G-V(R))=f(n-3).
\]
By the inductive hypothesis, \(G-V(R)\in \mathcal G\), and consequently \(G\in\mathcal G\).

Conversely, if \(G\in\mathcal G\), then \(G\) is chordal and a direct multiplicative computation over its connected components gives \(\tilde{\mu}(G)=f(n)\).
\end{proof}

\section{Algorithms}~\label{sec: alg}

\subsection{Counting perfect edge dominating sets}~\label{subsec: ped-counting}

In this subsection, we present a linear time algorithm that returns the number of PED-sets of a connected chordal graph. For this purpose, we modify slightly the algorithms PEDP-GSP and PEDP-C, both presented in \cite{Lu-Ko-Tang-2002}.

A graph is called a \emph{generalized series-parallel graph} (GSP graph), denoted by $G(u, v)$, where the vertices $u$ and $v$ of the graph are called the \emph{left} and \emph{right terminals}, respectively, if it is connected and is obtained according to the following recursive definition:

\begin{enumerate}
     \item The complete graph $K_2$ is a GSP graph and is called the \emph{basis graph} for the class of generalized series-parallel graphs.
     \item Given two GSP graphs $G_1(u_1, v_1)$ and $G_2(u_2, v_2)$, the graph $G^*$ obtained by applying one of the following three operations to $G_1$ and $G_2$ is also a GSP graph.
     \begin{enumerate}
          \item \emph{Series-1 composition} ($S1$): Identify vertices $v_1$ and $u_2$ to obtain $G^*(u_1, v_2)$.
          \item \emph{Series-2 composition} ($S2$): Identify vertices $v_1$ and $u_2$ to obtain $G^*(u_1, v_1)$.
          \item \emph{Parallel composition} ($P$): Identify vertices $u_1$ and $u_2$ and also identify vertices $v_1$ and $v_2$ to obtain $G^*(u_1, v_1)$. It is assumed that no multiple edges will be created by this composition.
     \end{enumerate}
     \item Only graphs constructed by a finite number of applications of series-1, series-2 and parallel compositions are GSP graphs.
\end{enumerate}

\begin{figure}
     \centering
     \begin{tikzpicture}[b/.style={circle, scale=0.5, draw=black, fill}, line width=0.5pt]
          \node (0) [b] at (0,0){};
          \node[above] at (0,0.1) {\footnotesize $\textcolor{red}{u_1}$};
          \node (1) [b] at (1,0){};
          \node[above] at (1,0.1) {\footnotesize $v_1=u_2$};
          \node (2) [b] at (2,0){};
          \node[above] at (2,0.1) {\footnotesize $\textcolor{red}{v_2}$};
          \node (a)[scale=0.9] at (1,1.6){(a) Series-1};
          \draw (0) -- (1) -- (2);
          
          \node (0) [b] at (3,0){};
          \node[above] at (3,0.1) {\footnotesize $\textcolor{red}{u_1}$};
          \node (1) [b] at (4,0){};
          \node[above] at (4,0.1) {\footnotesize $\textcolor{red}{v_1=u_2}$};
          \node (2) [b] at (5,0){};
          \node[above] at (5,0.1) {\footnotesize $v_2$};
          \node (a)[scale=0.9] at (4,1.6){(b) Series-2};
          \draw (0) -- (1) -- (2);
          
          \node (0) [b] at (6,0){};
          \node[above] at (6,0.1) {\footnotesize $\textcolor{red}{u_1=u_2}$};
          \node (1) [b] at (8,0){};
          \node[above] at (8,0.1) {\footnotesize $\textcolor{red}{v_1=v_2}$};
          \node (a)[scale=0.9] at (7,1.6){(c) Parallel};
          \draw (0) -- (1);
     \end{tikzpicture}
     \caption{Graphs obtained by applying the three compositions to the basis graph $K_2$. The terminals are highlighted in red.}
     \label{Fig: pictures of compositions}
\end{figure}

Let $G$ be any graph, let $x$ and $z$ be two vertices of $G$, and let $c$ and $c'$ be two colors, which can be black, yellow, or white. We define the following three functions:

\begin{itemize}
     \item Let $\Upsilon(G, x, z, c, c')$ denote the number of PED-sets of $G$ in which $x$ receives color $c$ and $z$ receives color $c'$.
     \item Let $\Phi(G, x, z, c)$ denote the number of colorings of $G$ in which $x$ is colored yellow, all its neighbors in $V(G)$ are colored white, $z$ receives color $c$, and the remaining vertices are colored so as  to form a valid 3-coloring of $G-x$.
     \item Let $\Gamma(G, x, z)$ denote the number of colorings of $G$ in which both $x$ and $z$ are colored yellow, all their neighbors in $V(G)$ are colored white, and the remaining vertices are colored so as to form a valid 3-coloring of $G-\{x, z\}$.
\end{itemize}

Note that if $x$ and $z$ are adjacent, then $\Gamma(G, x, z)=0$.

Let $G=G(u, v)$ be a generalized series-parallel graph, and let $\sigma$ be a valid 3-coloring of $G$. For $\alpha, \beta\in\{0, 1, 2, 3, 4\}$, we define:

\[
p_{\alpha, \beta}(G) = \begin{cases}
     \Upsilon(G, u, v, \sigma(u), \sigma(v)) & \text{if } \alpha \neq 4 \,\, \text{and } \beta \neq 4, \\
     \Phi(G, u, v, \sigma(v)) & \text{if } \alpha = 4 \,\, \text{and } \beta \neq 4, \\
     \Phi(G, v, u, \sigma(u)) & \text{if } \alpha \neq 4 \,\, \text{and } \beta = 4, \\
     \Gamma(G, u, v) & \text{if } \alpha = 4 \,\, \text{and } \beta = 4,
\end{cases}
\]

where
\begin{itemize}
     \item $\sigma(u)=\mathsf{w}$ (resp. $\sigma(v)=\mathsf{w}$) if $\alpha=0$ (resp. $\beta=0$).
     \item $\sigma(u)=\mathsf{y}$ (resp. $\sigma(v)=\mathsf{y}$) and $u$ (resp. $v$) has degree at least two in $G$ if $\alpha=1$ (resp. $\beta=1$).
     \item $\sigma(u)=\mathsf{y}$ (resp. $\sigma(v)=\mathsf{y}$) and $u$ (resp. $v$) has degree one in $G$ if $\alpha=2$ (resp. $\beta=2$).
     \item $\sigma(u)=\mathsf{b}$ (resp. $\sigma(v)=\mathsf{b}$) if $\alpha=3$ (resp. $\beta=3$).
\end{itemize}

If $K$ is a graph isomorphic to $K_2$, it is easy to verify that $p_{4, 0}(K)=p_{0, 4}(K)=p_{2, 2}(K)=1$, and that for any other combinations of values of $\alpha$ and $\beta$, we have $p_{\alpha, \beta}(K)=0$. Furthermore, the number of PED-sets of a GSP graph $G$ is $\tilde{\mu}(G)=\displaystyle \sum_{\alpha, \beta\in I} p_{\alpha, \beta}(G)$, where $I:=\{0, 1, 2, 3\}$. Now, we need to know the values of $p_{\alpha, \beta}(G)$ according to how $G$ is obtained from the GSP graphs $G_1$ and $G_2$. For this purpose, we consider the following three lemmas, where we only prove the first of them, since the proof of the remaining two is similar.

\begin{lem}~\label{lem: numped series-1}
Let $G=G(u, v)$ be a GSP graph obtained from the GSP graphs $G_1=G_1(u_1, v_1)$ and $G_2=G_2(u_2, v_2)$ by a series-1 composition. Then for all $\alpha, \beta \in I\cup\{4\}$,
$$p_{\alpha, \beta}(G) = p_{\alpha, 0}(G_1) \, p_{0, \beta}(G_2) + (p_{\alpha, 1}(G_1) + p_{\alpha, 2}(G_1)) \, p_{4, \beta}(G_2) + p_{\alpha, 4}(G_1) \, (p_{1, \beta}(G_2) + p_{2, \beta}(G_2))$$
$$+ (p_{\alpha, 2}(G_1) + p_{\alpha, 3}(G_1)) \, (p_{2, \beta}(G_2) + p_{3, \beta}(G_2)).$$
\end{lem}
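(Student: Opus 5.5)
The plan is to classify each valid 3-coloring of \(G\), with the terminal conditions encoded by \(\alpha\) and \(\beta\), according to how the identified vertex \(w:=v_1=u_2\) looks from each side. In a series-1 composition \(G\) is the union of \(G_1\) and \(G_2\) glued at \(w\), and no edges join \(V(G_1)\setminus\{w\}\) to \(V(G_2)\setminus\{w\}\). Hence a coloring \(\sigma\) of \(G\) restricts to colorings \(\sigma_1\) of \(G_1\) and \(\sigma_2\) of \(G_2\). Validity of \(\sigma\) at every vertex other than \(w\) is local to the side containing it. So the whole argument reduces to the condition at \(w\), namely the number of non-white neighbours of \(w\) in \(G_1\) plus the number in \(G_2\). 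The status of \(u=u_1\), and hence of \(\alpha\), concerns only \(G_1\), and likewise \(\beta\) concerns only \(G_2\). This holds including the case \(\alpha=4\) or \(\beta=4\), because \(u_1\ne w\) and \(v_2\ne w\) and the degrees of \(u\), \(v\) in \(G\) equal those in \(G_1\), \(G_2\).

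I would then split according to the colour of \(w\) in \(G\), matching the four summands. The cases are:
\begin{enumerate}[(i)]
\item \(w\) white. Then \(w\) is white in both sides, and its neighbours are yellow on both sides. This gives \(p_{\alpha,0}(G_1)p_{0,\beta}(G_2)\).
\item \(w\) yellow with its unique non-white neighbour in \(G_1\). On the \(G_1\) side, \(w\) is a yellow vertex, so the index is \(1\) or \(2\) according to \(d_{G_1}(w)\). On the \(G_2\) side, \(w\) is yellow with all \(G_2\)-neighbours white, and the rest of \(G_2\) is valid, which is exactly the \(\Phi\)-type count, index \(4\). This gives \((p_{\alpha,1}+p_{\alpha,2})(G_1)\,p_{4,\beta}(G_2)\).
\item \(w\) yellow with its unique non-white neighbour in \(G_2\). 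This is symmetric to (ii) and gives \(p_{\alpha,4}(G_1)(p_{1,\beta}+p_{2,\beta})(G_2)\).
\item \(w\) black. Then \(w\) has at least two non-white neighbours in total and no white neighbour, so on each side \(w\) has at least one non-white neighbour and no white neighbour. On side \(i\), if \(w\) has exactly one non-white neighbour in \(G_i\) and \(d_{G_i}(w)=1\), then \(w\) appears as a yellow leaf (index \(2\)). Otherwise it appears black (index \(3\)).
\end{enumerate}

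The main obstacle is case (iv). I must show that the pairs of side-colorings with indices in \(\{2,3\}\times\{2,3\}\) correspond bijectively to the black-\(w\) colorings of \(G\). The potential trap is a side in which \(w\) is yellow of index \(1\): \(w\) has degree at least \(2\) in \(G_i\) but only one non-white neighbour, so some neighbour is white. That side is correctly excluded because a black vertex may not have white neighbours, which is why index \(1\) is absent from the last summand.

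Conversely, if both sides carry index \(2\) or \(3\), then \(w\) has at least two non-white neighbours in \(G\) and none white, so it is black. Recolouring \(w\) black is then well defined, and the restriction map is reversible. The map from colorings of \(G\) to pairs is injective because the colour of \(w\) in \(G\) is determined by the case. This gives the summand \((p_{\alpha,2}+p_{\alpha,3})(G_1)(p_{2,\beta}+p_{3,\beta})(G_2)\).

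Finally, I would check that the four cases are exhaustive and disjoint, since the colour of \(w\) and the location of its unique non-white neighbour determine the case. I would also check that in the \(\Phi\)/\(\Gamma\) conventions a white neighbour of \(u\) or \(v\) does not interact with \(w\) beyond what each side already records. This holds because when \(\alpha=4\) or \(\beta=4\) the constraints concern only \(G_1\) or \(G_2\) respectively. Summing the four disjoint contributions yields the formula of Lemma~\ref{lem: numped series-1}.
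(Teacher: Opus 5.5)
Your proposal is correct and matches the paper's proof. Both split on the colour of the identified vertex $v_1=u_2$: white, yellow with its non-white neighbour in $G_1$, yellow with it in $G_2$, or black. In the black case both use the same two facts: a side where this vertex has degree one records it as a yellow leaf (index $2$), and index $1$ is excluded because a black vertex has no white neighbours. You are more explicit than the paper about why validity is local, why terminal degrees are preserved, and why the black case is a bijection, but the argument is the same.
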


\begin{proof}
Suppose that $G$ is obtained from $G_1$ and $G_2$ by applying a series-1 composition. In this operation, the vertices $v_1$ and $u_2$ are identified into a single vertex $a$, and the terminals of $G$ are $u_1$ and $v_2$, with labels $\alpha$ and $\beta$, respectively. If $a$ is colored white, then the number of 3-colorings of $G$ is $p_{\alpha, 0}(G_1) p_{0, \beta}(G_2)$. If $a$ is colored yellow, then its unique neighbor colored non-white lies in exactly one of $G_1$ and $G_2$. If this neighbor lies in $G_1$, then all neighbors of $a$ in $G_2$ are colored white. Thus, the number of 3-colorings of $G$ in this case is $(p_{\alpha, 1}(G_1) + p_{\alpha, 2}(G_1)) \, p_{4, \beta}(G_2)$. Likewise, if this neighbor lies in $G_2$, then the number of 3-colorings of $G$ is $p_{\alpha, 4}(G_1) \, (p_{1, \beta}(G_2) + p_{2, \beta}(G_2))$. Therefore, the total number of 3-colorings of $G$ when $a$ is colored yellow is $(p_{\alpha, 1}(G_1) + p_{\alpha, 2}(G_1)) \, p_{4, \beta}(G_2) + p_{\alpha, 4}(G_1) \, (p_{1, \beta}(G_2) + p_{2, \beta}(G_2))$. Finally, suppose that $a$ is colored black. Then all neighbors of $a$ in both $G_1$ and $G_2$ must be colored non-white. Observe that if $a$ is colored yellow and satisfies $d_{G_1}(a)=d_{G_2}(a)=1$, then $d_G(a)=2$, and recoloring $a$ from yellow to black, the resulting 3-coloring of $G$ is valid. Therefore, the corresponding number of 3-colorings of $G$ when $a$ is colored black is $(p_{\alpha, 2}(G_1) + p_{\alpha, 3}(G_1)) \, (p_{2, \beta}(G_2) + p_{3, \beta}(G_2))$. This completes the analysis of all possible colors of $a$.
\end{proof}

\begin{lem}~\label{lem: numped series-2}
Let $G=G(u, v)$ be a GSP graph obtained from the GSP graphs $G_1=G_1(u_1, v_1)$ and $G_2=G_2(u_2, v_2)$ by a series-2 composition. Then for all $\alpha \in I\cup\{4\}$,
\begin{enumerate}
     \item $p_{\alpha, 0}(G)=p_{\alpha, 0}(G_1) \, \displaystyle \sum_{\beta \in I} p_{0, \beta}(G_2)$.
     \item $p_{\alpha, 1}(G)=(p_{\alpha, 1}(G_1) + p_{\alpha, 2}(G_1)) \, \displaystyle \sum_{\beta \in I} p_{4, \beta}(G_2) + p_{\alpha, 4}(G_1) \, \sum_{\beta \in I} (p_{1, \beta}(G_2) + p_{2, \beta}(G_2))$.
     \item $p_{\alpha, 2}(G)=0$.
     \item $p_{\alpha, 3}(G)=(p_{\alpha, 2}(G_1) + p_{\alpha, 3}(G_1)) \, \displaystyle \sum_{\beta \in I} (p_{2, \beta}(G_2) + p_{3, \beta}(G_2))$.
     \item $p_{\alpha, 4}(G)=p_{\alpha, 4}(G_1) \, \displaystyle \sum_{\beta \in I} p_{4, \beta}(G_2)$.
\end{enumerate}
\end{lem}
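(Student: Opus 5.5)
The plan is to mirror the proof of Lemma~\ref{lem: numped series-1}. In a series-2 composition, $v_1$ and $u_2$ are identified into a single vertex $a$, and $a$ is now the right terminal $v$ of $G$. The vertex $v_2$ stops being a terminal, so it has to be summed over. Concretely, we fix the label $\alpha$ of $u=u_1$ and the label of $v=a$. Then we count the valid colorings (or the restricted colorings, for labels $4$) by splitting them at $a$. The part in $G_1$ is recorded by the second index of $p_{\alpha,\cdot}(G_1)$. The part in $G_2$ is recorded by the first index of $p_{\cdot,\beta}(G_2)$, summed over $\beta\in I$, since $v_2$ is an ordinary vertex of $G$ whose color is free. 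We use $\beta\in I$ and not $I\cup\{4\}$ because $v_2$ carries no special restriction.

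We then treat each label of $a$ in turn.
\begin{itemize}
\item \emph{Label $0$ ($a$ white).} All neighbours of $a$ on both sides are yellow. Validity on each side is independent, so we get the product $p_{\alpha,0}(G_1)\sum_{\beta}p_{0,\beta}(G_2)$, giving item~1.
\item \emph{Label $4$.} $a$ is yellow with all its neighbours white. This restriction factorizes over the two sides, giving item~5.
\item \emph{Yellow $a$ (labels $1$ and $2$).} The unique non-white neighbour of $a$ lies in exactly one of $G_1$ or $G_2$, and the other side sees $a$ as a label-$4$ vertex. This yields the two terms of item~2. Since $a$ has neighbours in both $G_1$ and $G_2$ (each is connected and contains an edge at that terminal), $d_G(a)\ge 2$. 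So the yellow case always falls under label $1$, and label $2$ never occurs, which is item~3.
\item \emph{Label $3$ ($a$ black).} All neighbours of $a$ are non-white. On each side this means $a$ is either black in $G_i$ or yellow with degree one in $G_i$. As in the series-1 lemma, a side where $a$ is a degree-one yellow vertex becomes valid with $a$ black once the sides are glued, because $d_G(a)\ge2$. This gives the factor $(p_{\alpha,2}+p_{\alpha,3})(G_1)$ times $\sum_\beta(p_{2,\beta}+p_{3,\beta})(G_2)$, which is item~4.
\end{itemize}

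For every case we also have to check that the correspondence between colorings of $G$ and pairs of colorings of $G_1$, $G_2$ is a bijection. The colorings only interact at $a$, and every validity condition at a vertex other than $a$ involves only that vertex's own side. The conditions at $a$ are exactly the ones just analysed. When $\alpha=4$, the condition "all neighbours of $u_1$ white" lives entirely in $G_1$, so it passes through unchanged.

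The step I expect to be the main obstacle is the black case, specifically its converse direction. We must show that a pair consisting of a degree-one yellow $a$ in $G_1$ and a black $a$ in $G_2$ really glues to a valid coloring in which $a$ is black. We must also rule out the pair black in $G_1$ and yellow of degree $\ge2$ in $G_2$: this pair is excluded because label-$1$ yellow means $a$ has a white neighbour. This uses property~4 of valid colorings (a black vertex has no white neighbours), and it is checked the same way as in Lemma~\ref{lem: numped series-1}.
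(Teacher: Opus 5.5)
Your proposal is correct and is essentially the argument the paper intends. The paper omits this proof as being analogous to Lemma~\ref{lem: numped series-1}, and you carry out that same case split on the color of the identified vertex $a=v_1=u_2$, which is now the right terminal: you sum over the unconstrained label $\beta\in I$ of $v_2$, and you use $d_G(a)\geq 2$ both to force $p_{\alpha,2}(G)=0$ and to justify the yellow-to-black gluing in the black case.
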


\begin{lem}~\label{lem: numped parallel}
Let $G=G(u, v)$ be a GSP graph obtained from the GSP graphs $G_1=G_1(u_1, v_1)$ and $G_2=G_2(u_2, v_2)$ by a parallel composition. Then,
\begin{enumerate}
     \item $p_{0, 0}(G)=p_{0, 0}(G_1) \, p_{0, 0}(G_2)$.
     \item $p_{0, 1}(G)=(p_{0, 1}(G_1) + p_{0, 2}(G_1)) \, p_{0, 4}(G_2) + p_{0, 4}(G_1) \, (p_{0, 1}(G_2) + p_{0, 2}(G_2))$.
     \item $p_{0, 3}(G)=(p_{0, 2}(G_1) + p_{0, 3}(G_1)) \, (p_{0, 2}(G_2) + p_{0, 3}(G_2))$.
     \item $p_{0, 4}(G)=p_{0, 4}(G_1) \, p_{0, 4}(G_2)$.
     \item $p_{1, 1}(G)=(p_{1, 1}(G_1) + p_{1, 2}(G_1) + p_{2, 1}(G_1) + p_{2, 2}(G_1)) \, p_{4, 4}(G_2) + p_{4, 4}(G_1) \, (p_{1, 1}(G_2) + p_{1, 2}(G_2) + p_{2, 1}(G_2) + p_{2, 2}(G_2)) + (p_{1, 4}(G_1) + p_{2, 4}(G_1)) \, (p_{4, 1}(G_2) + p_{4, 2}(G_2)) + (p_{4, 1}(G_1) + p_{4, 2}(G_1)) \, (p_{1, 4}(G_2) + p_{2, 4}(G_2))$.
     \item $p_{1, 3}(G)=(p_{1, 2}(G_1) + p_{1, 3}(G_1)) \, (p_{4, 2}(G_2) + p_{4, 3}(G_2)) + (p_{4, 2}(G_1) + p_{4, 3}(G_1)) \, (p_{1, 2}(G_2) + p_{1, 3}(G_2))$.
     \item $p_{1, 4}(G)=(p_{1, 4}(G_1) + p_{2, 4}(G_1)) \, p_{4, 4}(G_2) + p_{4, 4}(G_1) \, (p_{1, 4}(G_2) + p_{2, 4}(G_2))$.
     \item $p_{3, 3}(G)=(p_{2, 2}(G_1) + p_{2, 3}(G_1) + p_{3, 2}(G_1) + p_{3, 3}(G_1)) \, (p_{2, 2}(G_2) + p_{2, 3}(G_2) + p_{3, 2}(G_2) + p_{3, 3}(G_2))$.
     \item $p_{3, 4}(G)=(p_{2, 4}(G_1) + p_{3, 4}(G_1)) \, (p_{2, 4}(G_2) + p_{3, 4}(G_2))$.
     \item $p_{4, 4}(G)=p_{4, 4}(G_1) \, p_{4, 4}(G_2)$.
     \item $p_{\alpha, 2}(G)=p_{2, \beta}(G)=0$ for all $\alpha, \beta \in I\cup\{4\}$.
     \item Analogously, the values of $p_{\alpha, \beta}(G)$ with $\alpha>\beta$ are obtained from the items above by swapping the subscripts.
\end{enumerate}
\end{lem}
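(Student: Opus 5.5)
The plan is to argue exactly as in the proof of Lemma~\ref{lem: numped series-1}. In a parallel composition the vertex $u$ arises from identifying $u_1$ with $u_2$, and $v$ from identifying $v_1$ with $v_2$. Since no multiple edges are created, $E(G)$ is the disjoint union of $E(G_1)$ and $E(G_2)$. Hence a coloring of $G$ is the same as a pair of colorings of $G_1$ and $G_2$ that agree on the terminals. Validity of the coloring of $G$ can be checked locally: at non-terminal vertices the conditions hold exactly when they hold in the $G_i$ containing that vertex. So everything reduces to checking the validity conditions at $u$ and $v$. For each target label pair $(\alpha,\beta)$, I will decompose the set of colorings of $G$ by how the non-white neighbours of each terminal split between $G_1$ and $G_2$.

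The local rules I will use at a terminal $t\in\{u,v\}$ are the following.

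\begin{enumerate}
\item If $t$ is white, it is white in both parts. This gives label $0$ on both sides and produces factors $p_{0,\cdot}(G_1)p_{0,\cdot}(G_2)$, as in items 1--4.
\item If $t$ is yellow in $G$, its unique non-white neighbour lies in exactly one $G_i$. In that $G_i$, $t$ is a yellow vertex (label $1$ or $2$). In the other part, all neighbours of $t$ are white, which is exactly label $4$.
\item If $t$ is black in $G$, it has non-white neighbours on both sides and no white neighbour, and $d_G(t)\ge 2$ holds automatically. In each $G_i$, $t$ is then either black or yellow of degree one. This is the recoloring observation from the series-1 proof, and it gives factors $(p_{\cdot,2}+p_{\cdot,3})$ on each side.
\item If the label is $4$ for $G$, all neighbours of $t$ are white in both parts, so the label is $4$ on both sides.
\end{enumerate}

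Taking the product of the choices at $u$ and $v$ gives each item. For instance, $p_{1,1}(G)$ requires both $u$ and $v$ to be yellow. This gives four cross terms, according to whether $u$'s non-white neighbour lies in $G_1$ or $G_2$, and likewise for $v$. That explains the four products in item 5, with the yellow side carrying labels $\{1,2\}$ and the other side carrying label $4$.

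Item 11 holds because after a parallel composition each terminal has degree at least $2$. Each $G_i$ is connected and contains at least one edge, so each terminal has degree at least $1$ in each $G_i$, and the edge sets are disjoint. Item 12 follows by the symmetry $u\leftrightarrow v$.

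The step needing the most care is the interaction between the label $4$ condition and the terminal itself when $u$ and $v$ are adjacent. If the edge $uv$ lies in $G_1$, say, then in a "$\Phi$"-type count with $u$ yellow and all neighbours white, $v$ is forced to be white. I must check that the definitions of $p_{4,\beta}$ via $\Phi$ and $\Gamma$ handle this consistently on both sides, for example that $\Gamma=0$ when the terminals are adjacent. With that checked, the degree bookkeeping (labels $1$ versus $2$, and $d_G(t)\ge2$ for black) is routine.
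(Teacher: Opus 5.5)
Your local rules at the terminals are correct, and they are the bookkeeping the paper intends: it gives no separate proof, only that the argument is analogous to the series-1 case. The proof breaks at the sentence ``Taking the product of the choices at $u$ and $v$ gives each item''. You worked out only item 5, and item 6 does not follow from your rules. For $p_{1,3}(G)$, your rule 2 gives $u$ label $1$ or $2$ on the side containing its non-white neighbour and label $4$ on the other side. Your rule 3 gives $v$ label $2$ or $3$ on each side. Together they yield
\begin{align*}
p_{1,3}(G)={}&\bigl(p_{1,2}(G_1)+p_{1,3}(G_1)+p_{2,2}(G_1)+p_{2,3}(G_1)\bigr)\bigl(p_{4,2}(G_2)+p_{4,3}(G_2)\bigr)\\
&+\bigl(p_{4,2}(G_1)+p_{4,3}(G_1)\bigr)\bigl(p_{1,2}(G_2)+p_{1,3}(G_2)+p_{2,2}(G_2)+p_{2,3}(G_2)\bigr),
\end{align*}
whereas item 6 drops the terms in which $u$ has label $2$. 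Those are the colorings where $u$ has degree one in the part containing its non-white neighbour.

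These terms do not vanish, so item 6 as printed is false, as is its mirror for $p_{3,1}$ in item 12, and no argument can establish it. Take $G_1=K_2$ with edge $uv$, and let $G_2$ be the path $u\,w\,z\,v$ with its ends as terminals. Their parallel composition is the $4$-cycle with edges $uv,vz,zw,wu$. The PED-set $\{uv,vz\}$ colors $u$ yellow (degree $2$), $v$ black, $z$ yellow and $w$ white, so $p_{1,3}(G)=1$. However, the only nonzero values for $K_2$ are $p_{4,0}=p_{0,4}=p_{2,2}=1$, so item 6 returns $0$. The missing contribution is exactly $p_{2,2}(G_1)\,p_{4,2}(G_2)=1\cdot 1$; here $G_2-u$ is the path $w\,z\,v$ colored white, yellow, yellow. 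With the printed formulas, the algorithm would output $3$ instead of $\tilde{\mu}(C_4)=5$.

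What your argument actually proves is the corrected formula above. You need to check every item against your rules and state this correction, rather than assert that all items follow.

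A smaller point concerns your opening claim that a coloring of $G$ ``is the same as a pair of colorings of $G_1$ and $G_2$ that agree on the terminals''. This contradicts your own rules 2 and 3: a black terminal can be yellow of degree one in a part, and a label-$4$ terminal does not give a valid coloring of its part. The clean formulation is to split $P$ into $P\cap E(G_1)$ and $P\cap E(G_2)$ and recompute the terminal labels. That also supplies the gluing direction you need for a bijection.
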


For any GSP graph $G$, its structure can be represented by a \emph{parse tree}, $PT(G)$, that is defined as a binary tree in which each node of $PT(G)$ represents a subgraph of $G$.
\begin{itemize}
     \item Each leaf of $PT(G)$ represents an edge of $G$. It should be noted that there is a one-to-one correspondence between the leaves of $PT(G)$ and the edges of $G$.
     \item Each internal node of $PT(G)$ has a label either $S1$, $S2$ and $P$. A node labeled as $S1$ (resp. $S2$ and $P$) represents the subgraph of $G$ obtained by applying a series-1 (resp. series-2 and parallel) composition to the subgraphs corresponding to its children.
     \item The root of $PT(G)$ represents $G$ itself.
\end{itemize}

Note that the parse tree of a generalized series–parallel graph may not be unique. Furthermore, there are linear time algorithms to recognize whether a graph $G$ is generalized series–parallel and to construct a parse tree $PT(G)$ of it \cite{KTY, Lu-Ko-Tang-2002, VTL-1982}.

We now focus on the framework of the linear-time algorithm PEDP-GSP from \cite{Lu-Ko-Tang-2002}, presenting the modifications introduced in Lemmas \ref{lem: numped series-1}, \ref{lem: numped series-2}, and \ref{lem: numped parallel} to count the number of perfect edge dominating sets of a generalized series-parallel graph. The main modification consists in considering the terminals colored yellow according to their degrees in $G_1$ and $G_2$. For example, if $v_1$ and $u_2$ have degree one in $G_1$ and $G_2$, respectively, and a series-1 composition is applied to $G_1$ and $G_2$, then the identified vertex $v_1=u_2$ in $G$ must be colored black, resulting in a valid 3-coloring of $G$. On the contrary, if one of these terminals colored yellow has degree at least two, then it has at least one neighbor colored white, and after applying a series-1 composition, the resulting 3-coloring of $G$ is not valid.

Let $G$ be a generalized series-parallel graph with $|V(G)|=n$ and $|E(G)|=m$. Once we construct a parse tree $PT(G)$, we do
\begin{enumerate}
     \item Process $PT(G)$ in a bottom-up fashion with the following initialization: for each leaf $K$ of $PT(G)$, do $p_{4, 0}(K) = p_{0, 4}(K) = p_{2, 2}(K) = 1$, and $p_{\alpha, \beta}(K) = 0$ otherwise. Mark this leaf.
     \item Choose some unmarked vertex $v_i$ of $PT(G)$ whose children are marked.
     \begin{enumerate}
          \item If vertex $v_i$ is labeled as $S1$, then compute all $p_{\alpha, \beta}(G_i)$ according to Lemma \ref{lem: numped series-1}, where $\alpha, \beta \in I \cup \{4\}$.
          \item If vertex $v_i$ is labeled as $S2$, then compute all $p_{\alpha, \beta}(G_i)$ according to Lemma \ref{lem: numped series-2}, where $\alpha, \beta \in I \cup \{4\}$.
          \item If vertex $v_i$ is labeled as $P$, then compute all $p_{\alpha, \beta}(G_i)$ according to Lemma \ref{lem: numped parallel}, where $\alpha, \beta \in I \cup \{4\}$.
     \end{enumerate}
     Mark the vertex $v_i$.
     \item Finally, output $\tilde{\mu}(G)=\displaystyle \sum_{\alpha, \beta \in I} p_{\alpha, \beta}(G)$.
\end{enumerate}

With these settings, we obtain the following result.

\begin{theorem}~\label{thm: ped counting gsp}
The PED-counting problem in generalized series-parallel graphs can be solved in $O(n+m)$ time.
\end{theorem}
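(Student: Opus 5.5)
The proof has two parts: correctness of the bottom-up dynamic program, and its running time. For correctness I will argue by structural induction on the parse tree $PT(G)$ that, at every node $v_i$ representing the subgraph $G_i(u_i,v_i)$, the algorithm computes the true values $p_{\alpha,\beta}(G_i)$ for all $\alpha,\beta\in I\cup\{4\}$.

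\emph{Base case.} At a leaf, $G_i\cong K_2$, and the initialization $p_{4,0}=p_{0,4}=p_{2,2}=1$, all others $0$, is the direct verification already recorded in the text.

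\emph{Inductive step.} Assume the values are correct for both children $G_1,G_2$. Then the value at $v_i$ is correct by Lemma~\ref{lem: numped series-1}, Lemma~\ref{lem: numped series-2} or Lemma~\ref{lem: numped parallel}, according to whether the label is $S1$, $S2$ or $P$. Only the first lemma is proved in the text, so I would sketch the other two in the same way: case on the colour of the identified vertex (or vertices), and split yellow according to whether its unique non-white neighbour lies in $G_1$ or $G_2$. The degree refinement $\alpha\in\{1,2\}$ handles the case where two yellow terminals of degree one merge into a black vertex.

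For the parallel composition, the key observations are these:
\begin{itemize}
\item both identified terminals have degree at least two in $G$ (no multiple edges), which forces $p_{\alpha,2}(G)=p_{2,\beta}(G)=0$;
\item a black terminal needs all its neighbours non-white in both parts;
\item a yellow terminal has its non-white neighbour in exactly one part, which produces the cross terms with index $4$.
\end{itemize}
At the root, summing $p_{\alpha,\beta}(G)$ over $\alpha,\beta\in I$ enumerates every valid 3-coloring exactly once. Indeed, the cases $\sigma(u),\sigma(v)$ together with the degree split of yellow are disjoint and exhaustive. By the bijection between valid 3-colorings and PED-sets, this sum equals $\tilde{\mu}(G)$.

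For the running time, I would cite the known linear-time recognition of GSP graphs and construction of $PT(G)$ \cite{KTY, Lu-Ko-Tang-2002, VTL-1982}. The tree $PT(G)$ has exactly $m$ leaves and hence $m-1$ internal nodes. Each node stores $25$ numbers, and each recurrence in the three lemmas is a fixed sum of a bounded number of products, so each node costs $O(1)$ arithmetic operations. The total is therefore $O(n+m)$, assuming unit-cost arithmetic as in the paper's model.

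The main obstacle is the unproved parallel and series-2 cases, especially checking that the index-$4$ quantities $\Phi$ and $\Gamma$ compose correctly. The delicate point is that in $p_{4,\beta}$ the terminal is yellow with all neighbours white in that part, and it must receive its non-white neighbour from the other part. I would verify each item by that case analysis, with particular attention to the terms that arise when a yellow terminal is adjacent to the other terminal.
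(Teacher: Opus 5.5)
Your plan follows the paper's own argument. The paper derives the theorem by running the three composition lemmas bottom-up over a linear-time parse tree and summing over $I$ at the root; your structural induction and node count make that explicit.

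\textbf{The gap is in the inductive step ``correct by Lemma~\ref{lem: numped parallel}''.} Item~6 of that lemma, as printed, is false, so citing it verbatim (as the paper does) produces a wrong count.

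\emph{Why item~6 fails.} When $u$ is yellow and $v$ is black, the unique non-white neighbor of $u$ lies in some $G_i$. Inside $G_i$, the terminal $u$ may have degree one, i.e.\ it is in state $2$, not state $1$. The printed formula only carries $p_{1,2}(G_i)+p_{1,3}(G_i)$ and so misses these colorings.

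\emph{A concrete counterexample.}
\begin{enumerate}
\item Let $G_1(u,v)$ be the path $u\,x\,v$ and $G_2(u,v)$ the path $u\,y\,z\,v$; both are built by series-1 compositions. Their parallel composition is $C_5$.
\item The coloring $u=\mathsf{y}$, $x=v=\mathsf{b}$, $z=\mathsf{y}$, $y=\mathsf{w}$ is valid, so $p_{1,3}(C_5)=1$. It arises from $p_{2,2}(G_1)=1$ times $p_{4,2}(G_2)=1$.
\item However, $d_{G_1}(u)=1$ forces $p_{1,\beta}(G_1)=0$.
\item Also $p_{4,2}(G_1)=p_{4,3}(G_1)=0$, because with $x$ white the remaining edge $xv$ has no valid coloring.
\item Hence item~6 returns $0$. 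Together with its mirror image for $p_{3,1}$, the algorithm outputs $4$ instead of $\tilde{\mu}(C_5)=6$.
\end{enumerate}
The correct recurrence is
\[
p_{1,3}(G)=\bigl(p_{1,2}+p_{1,3}+p_{2,2}+p_{2,3}\bigr)(G_1)\,\bigl(p_{4,2}+p_{4,3}\bigr)(G_2)+\bigl(p_{4,2}+p_{4,3}\bigr)(G_1)\,\bigl(p_{1,2}+p_{1,3}+p_{2,2}+p_{2,3}\bigr)(G_2),
\]
and symmetrically for $p_{3,1}(G)$.

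\textbf{How to fix your plan.} The case analysis you describe yields exactly this correction if you apply it consistently. Split on which side carries the yellow terminal's non-white neighbor, and on that side admit both degree classes $1$ and $2$, as items~2, 5 and~7 of the same lemma already do. Your observation that both terminals have degree at least two in $G$ constrains only the indices of $G$, not those of the children.

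The remaining items of the parallel lemma, and all of Lemma~\ref{lem: numped series-2}, check out. The case you single out as delicate, a yellow terminal adjacent to the other terminal, is harmless: $\Phi$ and $\Gamma$ force that other terminal to be white, so those terms vanish automatically. With item~6 corrected, your induction, the root summation, and the $O(n+m)$ unit-cost bound go through unchanged.
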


With this preparation, we now turn to chordal graphs.

In \cite{Ga-1974}, Gavril proved that a graph $G$ is chordal if and only if it is the intersection graph of a family of subtrees in an undirected tree. Moreover, he proved that there exists a tree $T$ whose vertex set is the set of all maximal cliques of $G$, and each vertex $v$ in $G$ corresponds to the subtree of $T$ comprised of exactly those maximal cliques to which $v$ belongs. Such tree $T$ is called a \emph{clique tree} of $G$, and it is denoted by $CT(G)$. It is known that a clique tree of a chordal graph can be obtained in linear time \cite{Ga-1974}.

\begin{rmk}~\label{rmk: pedcomplete} \cite{Li-Lo-Mo-Sz}
Every complete graph on at least four vertices has all its vertices colored black.
\end{rmk}

Remark \ref{rmk: pedcomplete} implies that all vertices of every clique of $G$ on at least four vertices must be colored black. Therefore, the vertices of clique tree $CT(G)$ corresponding to such cliques can be deleted. Let $\mathcal{C}=\{C_1, C_2, \ldots, C_p\}$ denote these cliques, and let $V(\mathcal{C})=\cup_{1\leq i\leq p} C_i$. Deleting their corresponding vertices from $CT(G)$ disconnects the clique tree, resulting in a forest denoted by $\mathcal{F}=\{T_1, T_2, \ldots, T_q\}$. Clearly, each $T_i\in\mathcal{F}$ contains no clique of size greater than three. If $G_i$ is the subgraph of $G$ induced by the union of all cliques in $T_i$, then $T_i$ is a clique tree of $G_i$, and $G_i$ is a chordal graph with no clique of size greater than three. Moreover, each $G_i$ is a GSP graph due to the following lemma.

\begin{lem}~\label{lem: chorgsp} \cite{Lu-Ko-Tang-2002}
A chordal graph with no clique on at least four vertices is a generalized series-parallel graph.
\end{lem}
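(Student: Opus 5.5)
The plan is to argue by induction on the number of vertices, building $G$ one simplicial vertex at a time and editing a parse tree locally at a single leaf. Let $G$ be a connected chordal graph with no clique on four vertices. (Connectivity is needed, since GSP graphs are connected by definition; for a disconnected graph the statement is applied to each component.) The two standard facts I will use are:
\begin{itemize}
\item every chordal graph has a simplicial vertex;
\item every non-complete chordal graph has two non-adjacent simplicial vertices.
\end{itemize}
Since $G$ has no $K_4$, a simplicial vertex $s$ satisfies $|N(s)|\le 2$. If $|N(s)|=2$, then $N(s)=\{a,b\}$ with $ab\in E(G)$. Moreover $G-s$ is again connected, chordal (an induced subgraph, Lemma~\ref{lem: chordal_operations}) and $K_4$-free. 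So the whole proof reduces to two local operations on GSP graphs: adding a pendant vertex at an existing vertex, and adding a vertex adjacent to both ends of an existing edge.

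The base cases are $K_2$, which is the basis graph, and $K_3$, which is $P(K_2(a,b),S1(K_2(a,s),K_2(s,b)))$. For the inductive step, fix a parse tree $PT(G-s)$. Every vertex of $G-s$ is incident with some edge, and each edge is a leaf of the parse tree, with a designated left and right terminal.
\begin{itemize}
\item \emph{Triangle case} ($N(s)=\{a,b\}$). Replace the leaf $K_2(a,b)$ (in whichever orientation it occurs) by the subtree $P\bigl(K_2(a,b),\,S1(K_2(a,s),K_2(s,b))\bigr)$. This subtree has the same terminals $(a,b)$ and creates no multiple edges. Every composition above this leaf only identifies terminals of subgraphs, so the rest of the parse tree stays valid and now produces $G$.
\item \emph{Pendant case} ($N(s)=\{a\}$). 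Choose a leaf edge $ab$ in which $a$ is the \emph{right} terminal, i.e.\ $K_2(b,a)$. Replace it by $S2(K_2(b,a),K_2(a,s))$. This subtree again has terminals $(b,a)$, so nothing above the leaf changes.
\end{itemize}

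The main obstacle is the orientation issue in the pendant case. The composition $S2$ hangs material only at the right terminal. As far as I can tell, the path $s\!-\!a\!-\!b$ cannot be realized with terminals $(a,b)$, so a leaf in which $a$ occurs only as a left terminal cannot be fixed locally.

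To get around this, I would strengthen the induction hypothesis to the following statement: \emph{for every vertex $a$ of $G$ there is a parse tree of $G$ in which $a$ is the right terminal of some leaf}. I expect this to be provable by induction as well, for the following reasons.
\begin{itemize}
\item The new vertex $s$ is the right terminal of the leaf $K_2(a,s)$ in the pendant case, and of $K_2(a,s)$ inside the $S1$ in the triangle case.
\item For an old vertex $a'$, apply the hypothesis for $a'$ to $G-s$ and then perform the local insertion for $s$, which touches only one leaf.
\item The one delicate situation is when the insertion must take place at the very leaf that was witnessing $a'$. It is handled by noting that the inserted subtree again contains that edge as a leaf with its original orientation ($K_2(a,b)$ inside the $P$-node, and $K_2(b,a)$ inside the $S2$-node).
\end{itemize}
If this bookkeeping proved troublesome, my fallback would be to choose $s$ as a simplicial vertex that is last in a perfect elimination ordering compatible with a breadth-first traversal of the clique tree $CT(G)$. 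This would make each new vertex attach at a vertex introduced as a right terminal.

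Finally, I would check that every composition used is legal: the parallel composition in the triangle case creates no multiple edge because $s$ is new. I would also note that the construction is linear in $n+m$, which matches how Lemma~\ref{lem: chorgsp} is used in the algorithm.
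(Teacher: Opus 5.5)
Your triangle insertion and the $S2$-insertion at a leaf $K_2(b,a)$ are correct, and you have found the real obstacle. However, the strengthened hypothesis you propose does not close the induction. In the pendant case $N(s)=\{a\}$, to witness an old vertex $a'\neq a$ you take the parse tree of $G-s$ supplied by the hypothesis for $a'$ and then ``perform the local insertion for $s$''. That insertion needs a leaf of the form $K_2(b,a)$ \emph{in that same tree}, and the hypothesis for $a'$ says nothing about $a$. The tree supplied for $a$ has such a leaf but need not witness $a'$. What you need is one parse tree in which $a$ and $a'$ are simultaneously right terminals of leaves, and a per-vertex statement cannot give this. The situation you flag as delicate, inserting at the very leaf that witnesses $a'$, is harmless; this is where the argument actually fails. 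You also cannot demand a single tree in which every vertex is the right terminal of a leaf. Leaves correspond to edges, so in a tree on $n$ vertices at most $n-1$ vertices are right terminals of leaves, and already $P_3$ admits no such parse tree. The fallback via a perfect elimination ordering compatible with a breadth-first traversal of $CT(G)$ is not an argument as it stands.

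The missing idea is to make the invariant simultaneous while exempting exactly one vertex, and to treat that vertex at the root, the only place where terminals may change. Prove by induction: \emph{$G$ has a parse tree in which every vertex other than the left terminal $r$ of the root is the right terminal of some leaf.} This holds for $K_2(r,v)$. Both of your insertions preserve the terminals of the root, keep every old leaf with its orientation, and make $s$ the right terminal of a new leaf. So the invariant survives the triangle case and the pendant case with $a\neq r$. If $a=r$ and the root has terminals $(r,v)$, place on top a new $S1$-node whose children are $K_2(s,r)$ and the old root. This is a legal composition with terminals $(s,v)$; the new leaf $K_2(s,r)$ makes $r$ a right terminal, and $s$ becomes the exempt vertex.

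Alternatively, in any parse tree a vertex other than the left terminal of the root is either the right terminal of the root or, at the node where it stops being a terminal, the right terminal of a child. It is $v_1$ at an $S1$-node or $v_2$ at an $S2$-node, since parallel nodes never remove terminals. So $K_2(a,s)$ can be hung there by $S2$ without changing any terminals.

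With this repair your argument becomes a complete, self-contained proof. Note that the paper gives no proof of this lemma and only cites Lu, Ko and Tang.
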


Note that Remark \ref{rmk: pedcomplete} implies that if $P$ is a PED-set of $G$, then $P\cap E(G_i)$ is a PED-set of $G_i$, and contains all edges of it incident with some vertices of $V(\mathcal{C})$. Let $\overline{\mathcal{P}}(G_i)$ be a PED-set of $G_i$ with the restriction that it contains all edges of $G_i$ incident with some vertices of $V(\mathcal{C})$.

\begin{lem} \cite{Lu-Ko-Tang-2002}
Let $\mathcal{P}=(\cup_{1\leq i\leq q} \overline{\mathcal{P}}(G_i))\cup (\cup_{1\leq j\leq p} \{uv: u,v\in C_j, u\neq v\})$. Then $\mathcal{P}$ is a PED-set of $G$.
\end{lem}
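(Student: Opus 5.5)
We verify the defining condition directly, using the bijection between PED-sets and valid 3-colorings. Each edge $e$ of $G$ lies either inside some clique $C_j$ on at least four vertices, or inside some $G_i$: every edge lies in a maximal clique, and the maximal cliques not among $C_1,\dots,C_p$ are exactly the nodes of the trees $T_i$. Since $\mathcal{P}$ contains all edges of every $C_j$, and $\overline{\mathcal{P}}(G_i)$ contains every edge of $G_i$ incident with $V(\mathcal{C})$, every edge incident with $V(\mathcal{C})$ belongs to $\mathcal{P}$. It therefore suffices to check edges $e=xy\in E(G)\setminus\mathcal{P}$. Such an edge has $x,y\notin V(\mathcal{C})$, so $e$ lies in some $G_i$, and $e\notin\overline{\mathcal{P}}(G_i)$.

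The key structural step is to show that $x$ and $y$ have all their neighbors inside $G_i$. Take a neighbor $z$ of $x$, and a maximal clique $K$ containing $xz$. If $K$ is some $C_j$, then $x\in V(\mathcal{C})$, a contradiction; hence $K$ is a node of some $T_{i'}$. Now $x$ belongs to $K$ and to a maximal clique of $T_i$ containing $xy$. The cliques containing $x$ form a subtree of $CT(G)$, and none of them is a deleted node because $x\notin V(\mathcal{C})$. So this subtree lies in a single component of $CT(G)$ minus the deleted nodes, which forces $i'=i$. Thus $N_G(x)\cup N_G(y)\subseteq V(G_i)$, and every edge of $G$ adjacent to $e$ is an edge of $G_i$. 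This is the step I expect to require the most care: it is where Gavril's subtree property enters, and it is what rules out an edge of $\overline{\mathcal{P}}(G_j)$ with $j\neq i$ being adjacent to $e$.

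Given this, the edges of $\mathcal{P}$ that dominate $e$ are exactly the edges of $\mathcal{P}\cap E(G_i)$ adjacent to $e$. The edges of $G_i$ lying in some $C_j$ are incident with $V(\mathcal{C})$, so they already belong to $\overline{\mathcal{P}}(G_i)$; hence $\mathcal{P}\cap E(G_i)=\overline{\mathcal{P}}(G_i)$. Because $\overline{\mathcal{P}}(G_i)$ is a PED-set of $G_i$, exactly one of its edges dominates $e$. Therefore every edge outside $\mathcal{P}$ is dominated exactly once, and $\mathcal{P}$ is a PED-set of $G$.
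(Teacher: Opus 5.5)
The paper does not prove this lemma. It is quoted from Lu, Ko and Tang, so there is no in-paper argument to compare against.

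Your proposal is a correct, self-contained proof. You reduce to edges $e=xy\notin\mathcal{P}$ with $x,y\notin V(\mathcal{C})$. You then use the subtree property of $CT(G)$ to show that every maximal clique containing $x$ or $y$ is a node of a single tree $T_i$. This is exactly the localization needed, and the PED property of $\overline{\mathcal{P}}(G_i)$ then finishes the argument.

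Your first sentence invokes the bijection with valid 3-colorings, but you never use it, and this works in your favour. A coloring-gluing proof would have to reconcile the fact that a vertex of $V(\mathcal{C})$ may be yellow in the coloring of $G_i$ associated with $\overline{\mathcal{P}}(G_i)$ but black in $G$. For instance, take $G$ to be a $K_4$ on $\{a,b,c,d\}$ with pendant edges $dx$ and $xy$, and $\overline{\mathcal{P}}(G_1)=\{dx\}$; then $d$ is yellow in $G_1$ but black in $G$. Checking the definition edge by edge avoids this bookkeeping.

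One justification needs tightening. You conclude $\mathcal{P}\cap E(G_i)=\overline{\mathcal{P}}(G_i)$ only by noting that edges of $G_i$ lying in some $C_j$ already belong to $\overline{\mathcal{P}}(G_i)$. This does not address edges of $\overline{\mathcal{P}}(G_{i'})$ with $i'\neq i$, which a priori might also lie in $E(G_i)$.

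The fix is already contained in your structural step. All maximal cliques containing $x$ (resp.\ $y$) are nodes of $T_i$, so $x,y\notin V(G_{i'})$ for $i'\neq i$. Hence no edge of $\overline{\mathcal{P}}(G_{i'})$ is adjacent to $e$, which is all your final count requires. Equivalently, a vertex outside $V(\mathcal{C})$ lies in exactly one $G_i$. So any edge common to $G_i$ and $G_{i'}$ has both ends in $V(\mathcal{C})$ and therefore already belongs to $\overline{\mathcal{P}}(G_i)$. With that sentence added, the argument is complete.
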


The number of PED-sets $\overline{\mathcal{P}}(G_i)$ that $G_i$ admits will be denoted by $\tilde{\mu}_\mathcal{C}(G_i)$ for each $1\leq i\leq q$. Let $G$ be a connected chordal graph with $|V(G)|=n$ and $|E(G)|=m$. Just like the linear-time algorithm PEDP-C from \cite{Lu-Ko-Tang-2002}, we find the set $\mathcal{C}$ of cliques on at least four vertices and remove them from $CT(G)$ to obtain a forest $\mathcal{F}=\{T_1, T_2, \ldots, T_q\}$. Let $G_j$ be a generalized series-parallel graph whose clique tree is $T_j$ for each $1\leq j\leq q$.
\begin{enumerate}
     \item For each $1\leq j\leq q$, compute $\tilde{\mu}_\mathcal{C}(G_j)$ by processing the corresponding parse tree as in the previous algorithm, using the following initialization: for each edge $uv\in E(G_j)$, do
     \begin{itemize}
          \item If $u, v \in V(\mathcal{C})$, then $p_{3, 3}(G[\{u, v\}])=1$, and the other cases are zero.
          \item If $u, v \not\in V(\mathcal{C})$, then $p_{4, 0}(G[\{u, v\}])=p_{0, 4}(G[\{u, v\}])=p_{2, 2}(G[\{u, v\}])=1$, and the other cases are zero.
           \item If $u \in V(\mathcal{C})$ and $v \not\in V(\mathcal{C})$, then $p_{3, 2}(G[\{u, v\}])=1$, and the other cases are zero.
           \item If $u \not\in V(\mathcal{C})$ and $v \in V(\mathcal{C})$, then $p_{2, 3}(G[\{u, v\}])=1$, and the other cases are zero.
     \end{itemize}
     \item Finally, output $\tilde{\mu}(G)=\displaystyle \prod_{j=1}^q \tilde{\mu}_\mathcal{C}(G_j)$.
\end{enumerate}

For non-connected chordal graphs, we apply this algorithm to all their connected components. Consequently, we have the following theorem.

\begin{theorem}~\label{thm: ped counting chordal}
The PED-counting problem in chordal graphs can be solved in $O(n+m)$ time.
\end{theorem}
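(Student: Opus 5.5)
The proof will verify, in order, that the procedure just described is correct and that each of its stages runs in $O(n+m)$ time. The final product formula uses the product rule of Remark~\ref{rmk: connected components} on connected components. So I first reduce to a connected chordal graph $G$, since splitting into components and summing their sizes is linear.

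For \emph{correctness}, I will establish a bijection between the PED-sets of $G$ and the tuples $(\overline{\mathcal P}(G_1),\ldots,\overline{\mathcal P}(G_q))$ of restricted PED-sets.
\begin{enumerate}
\item In one direction, by Remark~\ref{rmk: pedcomplete} every vertex of $V(\mathcal C)$ is black in every valid 3-coloring. Hence every PED-set $P$ contains all edges inside the cliques $C_j$, and $P\cap E(G_i)$ is a restricted PED-set of $G_i$, as noted before the statement.
\item In the other direction, the preceding lemma from \cite{Lu-Ko-Tang-2002} shows that the union of restricted PED-sets with all clique edges is a PED-set of $G$.
\item These two maps are mutually inverse. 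The clique tree property guarantees that every edge of $G$ lies in some maximal clique, so it lies either in some $C_j$ or in some $G_i$.
\item An edge lying in two different $G_i$'s must have both endpoints in $V(\mathcal C)$. This is because the subtrees of $CT(G)$ corresponding to its endpoints are joined through a deleted node. Such an edge belongs to every restricted PED-set, so the choices in different $G_i$ are independent.
\end{enumerate}
This gives $\tilde{\mu}(G)=\prod_j \tilde{\mu}_{\mathcal C}(G_j)$.

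I will then argue that $\tilde{\mu}_{\mathcal C}(G_j)$ is computed correctly by the GSP dynamic program. By Lemma~\ref{lem: chorgsp}, $G_j$ is GSP, so a parse tree exists and Lemmas~\ref{lem: numped series-1}--\ref{lem: numped parallel} apply. The only change is the modified initialization, which forces every vertex of $V(\mathcal C)$ to be black and every edge incident to it to be in the PED-set. To justify it, I will observe that the recurrences only multiply and add counts according to the colors of the identified terminals. If the leaf tables give zero to every state in which a vertex of $V(\mathcal C)$ is non-black, then no coloring with such a vertex non-black is ever counted.

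One subtle point is a vertex $u\in V(\mathcal C)$ that has degree one in $G_j$ and is therefore yellow in $G_j$'s own coloring. The initialization labels it $3$ on the edge-leaf, so I must check that the compositions treat label $3$ consistently with label $2$ where needed. They do: the formulas combine $p_{\cdot,2}+p_{\cdot,3}$ for black identification, and in $G$ itself the vertex $u$ has degree at least $3$. I expect this consistency check to be the \emph{main obstacle}. It requires rereading the series and parallel recurrences to confirm that forced-black terminals never produce spurious zero counts or double counts, and I would verify it on small cases such as a $K_4$ with a pendant edge.

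For the \emph{running time}, I will cite the following facts.
\begin{enumerate}
\item The clique tree $CT(G)$ is computed in linear time \cite{Ga-1974}. The total size of all maximal cliques of a chordal graph is $O(n+m)$, so identifying the cliques of size at least $4$ and deleting them from $CT(G)$ is linear.
\item Building each $G_j$ is linear overall, since the edges are partitioned among the $C_j$'s and the $G_j$'s, with overlaps only charged a bounded number of times through the tree structure.
\item Each parse tree is built in linear time \cite{KTY, VTL-1982}.
\item Each node of a parse tree updates a constant number ($25$) of values in constant time, and the parse tree has $O(|E(G_j)|)$ nodes.
\end{enumerate}
Summing these bounds, together with Theorem~\ref{thm: ped counting gsp}, yields $O(n+m)$.
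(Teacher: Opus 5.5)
Your plan is the paper's own argument, which the paper only sketches as an algorithm: delete the cliques of size at least four from $CT(G)$, use $\tilde{\mu}(G)=\prod_j\tilde{\mu}_{\mathcal C}(G_j)$, and run the GSP dynamic program with the forced-black initialization. Your bijection between PED-sets of $G$ and tuples of restricted PED-sets is sound and more explicit than the paper. The only fact it uses silently is that a vertex outside $V(\mathcal C)$ has its whole neighbourhood inside a single $G_i$, which follows from the same subtree argument as your item~4.

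For the label-$3$ check you single out, the degree of $u$ in $G$ is irrelevant. What makes it work is the following:
\begin{enumerate}
\item By induction over the parse tree, a vertex of $V(\mathcal C)$ only ever carries label $3$.
\item At a merged vertex, label $3$ enters the recurrences only through sums $p_{\cdot,2}+p_{\cdot,3}$, which impose no degree condition.
\item Every white or yellow merge needs one of the labels $0,1,2,4$, and these are all zero at such vertices.
\end{enumerate}

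The one step that is false as written is in the running-time part. The edges are not partitioned among the $C_j$ and the $G_j$, and an edge can lie in unboundedly many $G_j$. For example, take $K_4$ on $\{a,b,c,d\}$ plus vertices $x_1,\dots,x_N$ adjacent exactly to $a$ and $b$. The star centred at $\{a,b,c,d\}$ is a valid clique tree, and deleting its centre leaves $N$ components. The edge $ab$ then lies in every triangle $G_t=G[\{a,b,x_t\}]$ as well as in the $K_4$. So bounded overlap cannot be the reason that building the $G_j$, and summing the parse-tree sizes $O(|E(G_j)|)$, is linear.

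Charge to clique-tree nodes instead. Suppose $u,v\in V(G_j)$ are adjacent and $X$ is a maximal clique containing both. The node of $T_j$ nearest to $X$ lies on the paths from $X$ to the nodes of $T_j$ containing $u$ and $v$, so it contains both; this is the paper's remark that $T_j$ is a clique tree of $G_j$. Hence every vertex and edge of $G_j$ lies in a node of $T_j$, and each node has size at most $3$. This gives $|V(G_j)|+|E(G_j)|\le 6|V(T_j)|$. Summing over $j$ gives at most $6n$, since a chordal graph has at most $n$ maximal cliques. With this bound the rest of your time analysis goes through.
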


\subsection{Counting dominating induced matchings}~\label{subsec: dim-counting}

The same dynamic-programming approach also yields linear-time algorithms for counting dominating induced matchings. Recall that a DIM is a PED-set whose associated valid 3-coloring uses only yellow and white vertices. Thus, in this setting, the color black is forbidden.

Let \(G=G(u,v)\) be a generalized series-parallel graph, and let \(\sigma\) be a valid 3-coloring of \(G\) such that
\(\sigma(x)\neq \mathsf{b}\) for every \(x\in V(G)\). For \(\alpha,\beta\in\{0,1,2\}\), we define
\[
d_{\alpha,\beta}(G) =
\begin{cases}
     \Upsilon(G,u,v,\sigma(u),\sigma(v))
     & \text{if } \alpha \neq 2 \text{ and } \beta \neq 2, \\
     \Phi(G,u,v,\sigma(v))
     & \text{if } \alpha = 2 \text{ and } \beta \neq 2, \\
     \Phi(G,v,u,\sigma(u))
     & \text{if } \alpha \neq 2 \text{ and } \beta = 2, \\
     \Gamma(G,u,v)
     & \text{if } \alpha = 2 \text{ and } \beta = 2,
\end{cases}
\]
where \(\sigma(u)=\mathsf{w}\) if \(\alpha=0\), \(\sigma(u)=\mathsf{y}\) if \(\alpha=1\), and analogously \(\sigma(v)=\mathsf{w}\) if \(\beta=0\), \(\sigma(v)=\mathsf{y}\) if \(\beta=1\). Observe that, in this setting, we do not need to distinguish whether a yellow terminal has degree one or at least two.

Let \(K\) be the basis graph isomorphic to \(K_2\). We initialize
\[
     d_{1,1}(K)=d_{2,0}(K)=d_{0,2}(K)=1,
\]
and all other values \(d_{\alpha,\beta}(K)\) are set to \(0\). With this initialization, the same dynamic-programming updates used in Lemmas~\ref{lem: numped series-1}, \ref{lem: numped series-2}, and \ref{lem: numped parallel}, restricted to the states \(\{0,1,2\}\), compute all values \(d_{\alpha,\beta}(G)\) in constant time per composition step of the parse tree.

Consequently, the number of DIMs of a generalized series-parallel graph is obtained by processing its parse tree in linear time.

\begin{theorem}~\label{thm: dim counting gsp}
The DIM-counting problem in generalized series-parallel graphs can be solved
in \(O(n+m)\) time.
\end{theorem}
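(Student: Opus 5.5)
The argument mirrors the proof of Theorem~\ref{thm: ped counting gsp}, restricted to colorings that avoid black. First I fix the semantics of the states. For a GSP graph $G(u,v)$ and $\alpha,\beta\in\{0,1,2\}$, state $0$ means the terminal is white, state $1$ means it is yellow, and state $2$ means it is yellow with all neighbours in $G$ white (the $\Phi$/$\Gamma$ situation). In every case the rest of the graph carries a valid black-free 3-coloring. Black is forbidden, so every non-white vertex is yellow and therefore has exactly one non-white neighbour. The degree distinction between states $1$ and $2$ of the PED algorithm is then irrelevant, since the black-recoloring of a yellow identified vertex of degree one on each side never occurs. For the basis graph $K_2$, the two endpoints are either both yellow (state $(1,1)$), or one endpoint is yellow with its only neighbour white (states $(2,0)$ and $(0,2)$). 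This justifies the stated initialization. By the bijection between PED-sets and valid 3-colorings, $\mu(G)=\sum_{\alpha,\beta\in\{0,1\}}d_{\alpha,\beta}(G)$.

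Next I derive the composition rules as the black-free specialisation of Lemmas~\ref{lem: numped series-1}, \ref{lem: numped series-2} and \ref{lem: numped parallel}, case by case on the colour of the identified vertex $a$. For series-1:
\[
d_{\alpha,\beta}(G)=d_{\alpha,0}(G_1)d_{0,\beta}(G_2)+d_{\alpha,1}(G_1)d_{2,\beta}(G_2)+d_{\alpha,2}(G_1)d_{1,\beta}(G_2).
\]
The first term covers $a$ white. The other two cover $a$ yellow with its unique non-white neighbour on the $G_1$ side or on the $G_2$ side, respectively; the opposite side must then see $a$ as having only white neighbours. The black term of Lemma~\ref{lem: numped series-1} disappears. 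Series-2 is the same rule with the $v_2$-side summed over $\beta\in\{0,1\}$. For parallel composition, both terminals are shared. Each of them is either white in both parts, or yellow with its non-white neighbour in exactly one part, so the rule is a sum over these splittings. For instance,
\[
d_{1,1}(G)=d_{1,1}(G_1)d_{2,2}(G_2)+d_{2,2}(G_1)d_{1,1}(G_2)+d_{1,2}(G_1)d_{2,1}(G_2)+d_{2,1}(G_1)d_{1,2}(G_2).
\]
The main point to check carefully is that these equations are correct bijectively. Colorings of $G$ must correspond exactly to pairs of partial colorings, and no yellow vertex may end up with zero or with two non-white neighbours after gluing. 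This follows because each state pins down precisely how many non-white neighbours a terminal has within each part.

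Finally, I take a parse tree $PT(G)$, which can be built in linear time \cite{KTY,Lu-Ko-Tang-2002,VTL-1982}. It has $m$ leaves and $m-1$ internal nodes. I process it bottom-up, computing the nine values $d_{\alpha,\beta}$ at each node in $O(1)$ arithmetic operations. The total time is $O(n+m)$, and the answer is read off at the root. The only real obstacle is the exhaustive verification of the parallel-composition cases, and that is routine.
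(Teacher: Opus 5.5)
Your proposal is correct and is essentially the paper's argument. It uses the same states $\{0,1,2\}$, the same initialization $d_{1,1}(K)=d_{2,0}(K)=d_{0,2}(K)=1$, and the black-free specialization of the PED updates run bottom-up on a parse tree in $O(1)$ per node, and it usefully spells out what the paper leaves implicit: PED states $1$ and $2$ merge into state $1$, PED state $4$ becomes state $2$, and the black term is dropped. One small point: in series-2 the identified vertex $a$ becomes the right terminal, so you also need the entry $d_{\alpha,2}(G)=d_{\alpha,2}(G_1)\sum_{\gamma\in\{0,1\}}d_{2,\gamma}(G_2)$ (the analogue of item~5 of Lemma~\ref{lem: numped series-2}), which ``the same rule with the $v_2$-side summed'' does not literally produce.
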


We now consider chordal graphs. If a graph contains a clique on at least four vertices, then it has no DIM~\cite{Lu-Ko-Tang-2002}. Hence, given a chordal graph \(G\), we first use a depth-first search on a clique tree of \(G\) to check whether \(G\) contains a clique on at least four vertices. If such a clique exists, then the number of DIMs of \(G\) is \(0\). Otherwise, by Lemma~\ref{lem: chorgsp}, the graph \(G\) is a generalized series-parallel
graph. We can then apply the algorithm described above.

For non-connected chordal graphs, we apply the algorithm to each connected component and multiply the resulting numbers. We obtain the following result.

\begin{theorem}~\label{thm: dim counting chordal}
The DIM-counting problem in chordal graphs can be solved in \(O(n+m)\) time.
\end{theorem}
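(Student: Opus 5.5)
The argument reduces Theorem~\ref{thm: dim counting chordal} to Theorem~\ref{thm: dim counting gsp}, with a linear-time preprocessing step and a component-wise product. First, compute the connected components of \(G\) by a depth-first search in \(O(n+m)\) time. Every DIM of \(G\) restricts to a DIM of each component, and DIMs of the components combine freely. This is the DIM analogue of Remark~\ref{rmk: connected components}, and it holds because DIMs are exactly the valid 3-colorings with no black vertex, a condition checked vertex by vertex. Hence \(\mu(G)\) is the product of \(\mu\) over the components, and it suffices to treat a connected chordal graph \(G\) on \(n\) vertices and \(m\) edges.

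For connected \(G\), build a clique tree \(CT(G)\) in linear time \cite{Ga-1974}. Its nodes are the maximal cliques, and their total size is \(O(n+m)\). Traversing \(CT(G)\), we decide whether some maximal clique has at least four vertices. If so, Remark~\ref{rmk: pedcomplete} forces all its vertices to be black in every valid 3-coloring, so no DIM exists and we output \(\mu(G)=0\). This agrees with the cited fact from \cite{Lu-Ko-Tang-2002}. Otherwise \(G\) has no \(K_4\), and by Lemma~\ref{lem: chorgsp} it is a GSP graph. We construct a parse tree \(PT(G)\) in linear time \cite{KTY, Lu-Ko-Tang-2002, VTL-1982}. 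The parse tree has \(m\) leaves, hence \(O(m)\) nodes. We then run the dynamic program on the states \(d_{\alpha,\beta}\), \(\alpha,\beta\in\{0,1,2\}\), with the stated initialization, and output \(\sum_{\alpha,\beta\in\{0,1\}} d_{\alpha,\beta}(G)\).

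The main obstacle is justifying correctness of the restricted recurrences. We will verify that in Lemmas~\ref{lem: numped series-1}, \ref{lem: numped series-2} and \ref{lem: numped parallel}, deleting every term involving black (index \(3\)) and merging the two yellow indices \(1,2\) into a single index gives exactly the black-free counts. When an identified vertex is yellow, its unique non-white neighbor lies in exactly one side, giving the \(\Phi\)-type products. This is the same case analysis as in the proof of Lemma~\ref{lem: numped series-1}, now without the black case. Indeed, the degree distinction between indices \(1\) and \(2\) was needed only to allow two yellow leaves to merge into a black vertex, which is now forbidden. Each update takes constant time, so the total running time is \(O(n+m)\).
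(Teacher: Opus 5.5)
Your proposal is correct and follows the paper's argument: handle connected components separately and multiply, use a clique tree to detect a clique on at least four vertices (output $0$), and otherwise invoke Lemma~\ref{lem: chorgsp} and run the GSP dynamic program on the states $d_{\alpha,\beta}$, summing over $\alpha,\beta\in\{0,1\}$. Your explicit check of the restricted recurrences fills in what the paper only asserts as ``the same updates restricted to the states $\{0,1,2\}$''. That check drops the entire black case, including the merge of two degree-one yellow terminals, merges the yellow indices $1,2$, and relabels the $\Phi$-index $4$ as $2$.
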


\section{Conclusions and future research}~\label{sec: conc}

In this paper, we derived a recursive formula for the number of perfect edge dominating sets of the path on \(n\) vertices. We also determined the extremal graphs in the classes of trees, forests, and chordal graphs, namely,
those graphs on \(n\) vertices that maximize the number of perfect edge dominating sets within each class. On the algorithmic side, based on the ideas of Lu, Ko, and Tang~\cite{Lu-Ko-Tang-2002}, we obtained linear-time
algorithms for PED-counting in generalized series-parallel graphs and chordal graphs. We also showed that the same dynamic-programming approach yields linear-time algorithms for DIM-counting in both graph classes.

Regarding future research, the following problem remains open.

\begin{conj}~\label{conj: pedcon}
If \(G\) is a connected graph on \(n\) vertices, then
\[
     \tilde{\mu}(G)\leq \tilde{\mu}(C_n).
\]
\end{conj}

At present, Conjecture~\ref{conj: pedcon} is known to hold for paths, and hence for trees. Indeed, one can easily verify that \(\tilde{\mu}(C_3)=4\), \(\tilde{\mu}(C_4)=5\), and \(\tilde{\mu}(C_5)=6\), and the following theorem shows that every tree satisfies the conjecture.

\begin{theorem}~\label{thm: recfor cycle}
If \(n\geq 6\), then
\[
     \tilde{\mu}(C_n)=\tilde{\mu}(C_{n-1})+\tilde{\mu}(C_{n-3}).
\]
\end{theorem}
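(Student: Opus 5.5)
The plan is to avoid a local graph surgery of the kind used in Proposition~\ref{prop: ped of a graph plus P3}, since a cycle has no leaves to anchor it. Instead I would describe the valid 3-colorings of \(C_n\) explicitly and reduce the claim to an identity about compositions. Every vertex of \(C_n\) has degree \(2\), so the conditions defining a valid 3-coloring read as follows along the cycle:
\begin{itemize}
\item \(W\) is independent and every neighbour of a white vertex is yellow;
\item a yellow vertex has exactly one non-white neighbour;
\item a black vertex has both neighbours non-white.
\end{itemize}
Hence if \(W=\emptyset\) every vertex is black, which gives the trivial PED-set. If \(W\neq\emptyset\), the white vertices cut the cycle into maximal non-white arcs. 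Each arc must have at least two vertices, with yellow ends and black interior vertices. Conversely, any set of white vertices with all cyclic gaps of length at least \(2\) determines exactly one valid coloring. Writing \(a_n\) for the number of nonempty such sets of white vertices on \(C_n\), we get \(\tilde{\mu}(C_n)=1+a_n\). The theorem is therefore equivalent to
\[
a_n=a_{n-1}+a_{n-3}+1 \qquad (n\geq 6).
\]

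Next I would count \(a_n\) through compositions. Let \(c_j\) be the number of compositions of \(j\) into parts of size at least \(3\), with \(c_0=1\) and \(c_1=c_2=0\). Each block ``white vertex followed by its arc'' has length at least \(3\). Fix the vertex \(v_1\) and let the block containing \(v_1\) have length \(k\); there are \(k\) ways to place \(v_1\) inside it. The remaining \(n-k\) vertices, read cyclically from the end of that block, form a linear composition into parts of size at least \(3\). This gives
\[
a_n=\sum_{k=3}^{n} k\,c_{n-k}.
\]
I then pass to generating functions. From \(C(x)=\sum_j c_jx^j=\bigl(1-\tfrac{x^3}{1-x}\bigr)^{-1}=\tfrac{1-x}{1-x-x^3}\) and \(\sum_{k\geq 3}kx^k=\tfrac{x^3(3-2x)}{(1-x)^2}\), we obtain
\[
A(x)=\sum_n a_nx^n=\frac{x^3(3-2x)}{(1-x)(1-x-x^3)}.
\]
Therefore
\[
(1-x-x^3)A(x)=\frac{x^3(3-2x)}{1-x}=3x^3+x^4+x^5+\cdots,
\]
so \(a_n-a_{n-1}-a_{n-3}=1\) for every \(n\geq 4\), with \(a_j=0\) for \(j\leq 2\). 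Adding \(1\) to each of the three terms gives the recurrence for \(\tilde{\mu}(C_n)\). A sanity check against the stated values: \(a_3=3\), \(a_4=4\), \(a_5=5\) and \(a_6=9\), so \(\tilde{\mu}(C_6)=10=6+4\).

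The step needing the most care is the structural classification in the first paragraph. One must check that arcs of length exactly \(2\) (colored yellow, yellow) are allowed and arcs of length \(1\) are not, and that the all-black coloring is the only one with \(W=\emptyset\). One must also check that the rooted decomposition at \(v_1\) counts each white set exactly once, in particular when there is a single white vertex, i.e.\ \(k=n\) and \(c_0=1\). The rest is routine. If a purely combinatorial proof is preferred, the identity \(a_n=a_{n-1}+a_{n-3}+1\) can instead be obtained bijectively: shorten the block containing \(v_1\) by one vertex when it has length at least \(4\); delete that block entirely when it has length \(3\) and is not the only block; the single exceptional configuration with one block of length \(3\) occurs only for \(n=3\). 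For \(n\geq 6\), however, the \(+1\) comes most transparently from the generating-function computation above.
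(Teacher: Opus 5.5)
Your proof is correct, but it takes a different route from the paper. The paper never classifies the PED-sets of $C_n$. It conditions on the color of $v_1$ and performs local surgery: deleting $v_1$ and adding $v_2v_n$ gives a copy of $C_{n-1}$, and deleting $v_n,v_1,v_2$ and adding $v_3v_{n-1}$ gives a copy of $C_{n-3}$, with a black neighbour of $v_1$ recolored yellow when needed. It then reassembles the pieces into $\tilde{\mu}(C_{n-1})$ and $\tilde{\mu}(C_{n-3})$. For the $C_{n-3}$ terms this tacitly uses the fact that the number of colorings with a prescribed white vertex is the same for every vertex of the cycle. This is the technique of Proposition~\ref{prop: ped of a graph plus P3}, and it produces explicit correspondences between PED-sets without any global description.

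You instead use $2$-regularity to describe every PED-set of $C_n$: either the all-black coloring, or a nonempty white set whose cyclic gaps are all at least $2$. The statement then becomes a count of rooted cyclic compositions. Your rooting at $v_1$ is right ($k$ placements, $c_0=1$ for a single white vertex, $c_1=c_2=0$), and so is $A(x)=x^3(3-2x)/((1-x)(1-x-x^3))$. Your approach yields more than the recurrence:
\begin{itemize}
\item an explicit formula $\tilde{\mu}(C_n)=1+\sum_{k=3}^{n}k\,c_{n-k}$;
\item a rational generating function with the same denominator $1-x-x^3$ as for paths;
\item a clear reason for the threshold $n\geq 6$. The relation $a_n=a_{n-1}+a_{n-3}+1$ already holds for $n\geq 4$ with $a_1=a_2=0$, so the restriction only ensures that $C_{n-3}$ is a genuine cycle. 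Its trivial PED-set supplies the extra $1$; in the paper's surgery this corresponds to the coloring in which $v_1$ is the only white vertex.
\end{itemize}

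You should drop or repair the bijective sketch at the end, because as stated it is wrong. The sets whose $v_1$-block has length $k\geq 4$ number $\sum_{k\geq 4}k\,c_{n-k}$, while $a_{n-1}=\sum_{k\geq 4}(k-1)\,c_{n-k}$. So ``shortening by one vertex'' cannot be a bijection, since a block of length $k-1$ offers only $k-1$ positions for $v_1$. Deleting a length-$3$ block sends $3c_{n-3}$ sets into a family of size $a_{n-3}$; for $n=7$ these are $3$ and $4$. Finally, for $n\geq 6$ the sketch produces no $+1$ at all. Your main argument does not use the sketch, so it is unaffected.
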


\begin{proof}
Let \(C\) be a graph isomorphic to the cycle \(C_n\), with vertex set \(V(C)=\{v_1, v_2, \ldots, v_n\}\), and edge set
\(E(C)=\{v_1v_2, v_2v_3, \ldots, v_{n-1} v_n, v_n v_1\}\).

We count the number of PED-sets of \(C\) by considering the color assigned
to the vertex \(v_1\).

First, suppose that \(v_1\) is colored white. Then \(v_2\) and \(v_n\) must
be colored yellow, and the vertices \(v_3\) and \(v_{n-1}\) must receive a
non-white color. Removing the vertices \(v_n\), \(v_1\), and \(v_2\), and
adding the edge \(v_3v_{n-1}\), we obtain a cycle \(C'\) on \(n-3\) vertices.
Therefore,
\[
\tilde{\mu}(C,v_1,\mathsf{w})
 =
\tilde{\mu}(C',v_3,\mathsf{b})
+
\tilde{\mu}(C',v_4,\mathsf{w}),
\]
where \(\tilde{\mu}(C',v_4,\mathsf{w})\) counts the number of PED-sets of
\(C'\) in which \(v_3\) is colored yellow and its unique non-white neighbor
is \(v_{n-1}\).

Now, suppose that \(v_1\) is colored black. Then \(v_2\) and \(v_n\) must
receive non-white colors. Removing the vertex \(v_1\), and adding the edge
\(v_2v_n\), we obtain a cycle \(C''\) on \(n-1\) vertices. Therefore,
\[
\tilde{\mu}(C,v_1,\mathsf{b})
 =
\tilde{\mu}(C'',v_2,\mathsf{b})
+
\tilde{\mu}(C'',v_3,\mathsf{w}),
\]
where \(\tilde{\mu}(C'',v_3,\mathsf{w})\) counts the number of PED-sets of
\(C''\) in which \(v_2\) is colored yellow and its unique non-white neighbor
is \(v_n\).

Finally, assume that \(v_1\) is colored yellow. Then its unique non-white
neighbor is either \(v_2\) or \(v_n\). If \(v_2\) is colored non-white, then
\(v_n\) must be colored white; symmetrically, if \(v_n\) is colored
non-white, then \(v_2\) must be colored white.

\begin{enumerate}
     \item Suppose that \(v_2\) is colored white and \(v_n\) is colored
     non-white. Then \(v_3\) must be colored yellow, and \(v_4\) must be
     non-white. If \(v_n\) is colored yellow, then \(v_{n-1}\) must be
     colored white. Removing the vertices \(v_n\), \(v_1\), and \(v_2\), and
     adding the edge \(v_3v_{n-1}\), we obtain the cycle \(C'\) again. On
     the other hand, if \(v_n\) is colored black, then \(v_{n-1}\) must be
     non-white. Removing the vertex \(v_1\), changing the color of \(v_n\)
     from black to yellow, and adding the edge \(v_2v_n\), we obtain the
     cycle \(C''\) again. Consequently,
     \[
     \tilde{\mu}(C,v_2,\mathsf{w})
     =
     \tilde{\mu}(C',v_{n-1},\mathsf{w})
     +
     \tilde{\mu}(C'',v_2,\mathsf{w}),
     \]
     where \(\tilde{\mu}(C',v_{n-1},\mathsf{w})\) counts the number of
     PED-sets of \(C'\) in which \(v_3\) is colored yellow and its unique
     non-white neighbor is \(v_4\).

     \item Suppose that \(v_n\) is colored white and \(v_2\) is colored
     non-white. Then \(v_{n-1}\) must be colored yellow, and \(v_{n-2}\) must
     be non-white. If \(v_2\) is colored yellow, then \(v_3\) must be colored
     white. Removing the vertices \(v_{n-1}\), \(v_n\), and \(v_1\), and
     adding the edge \(v_2v_{n-2}\), we obtain the cycle \(C'\) again. On
     the other hand, if \(v_2\) is colored black, then \(v_3\) must be
     non-white. Removing the vertex \(v_1\), changing the color of \(v_2\)
     from black to yellow, and adding the edge \(v_2v_n\), we obtain the
     cycle \(C''\) again. Consequently,
     \[
     \tilde{\mu}(C,v_n,\mathsf{w})
     =
     \tilde{\mu}(C',v_3,\mathsf{w})
     +
     \tilde{\mu}(C'',v_n,\mathsf{w}),
     \]
     where \(\tilde{\mu}(C',v_3,\mathsf{w})\) counts the number of PED-sets
     of \(C'\) in which \(v_2\) is colored yellow and its unique non-white
     neighbor is \(v_3\).
\end{enumerate}

Combining the three cases for the color of \(v_1\), we obtain
\[
     \tilde{\mu}(C_n)=\tilde{\mu}(C_{n-1})+\tilde{\mu}(C_{n-3}).
\]
\end{proof}

Theorem~\ref{thm: recfor cycle} shows that \(\tilde{\mu}(C_n)\) satisfies the same recurrence as \(\tilde{\mu}(P_n)\), but with larger initial values. Hence,
\[
     \tilde{\mu}(P_n)<\tilde{\mu}(C_n)
\]
for all \(n\geq 3\).

We conclude with the following open problem.

\begin{conj}
If \(G\) is an extremal graph on \(n\geq 6\) vertices in the class of all graphs on \(n\) vertices, then $G\in \mathcal{G}$, where
\begin{eqnarray*}
\mathcal G &=&
\left\{ \frac{n}{3}C_3:
n\geq 6 \text{ and } n\equiv 0 \pmod 3 \right\} \cup \\
&& \left\{
\frac{n-4}{3}C_3 + C_4:
n\geq 7 \text{ and } n\equiv 1 \pmod 3 \right\} \cup \\
&& \left\{
\frac{n-8}{3}C_3 + 2C_4:
n\geq 8 \text{ and } n\equiv 2 \pmod 3 \right\}.
\end{eqnarray*}
\end{conj}

Theorem~\ref{thm: triangle} allows us to separate the triangles of \(G\), but so far it is not possible to do the same for induced cycles of length four.

\section{Acknowledgments}

We are grateful to Ezequiel Dratman for fruitful discussions on the enumeration of perfect edge dominating sets. In particular, these discussions helped us test the first Sage computations for paths and forests and led to the conjectural upper bound that is proved here as Theorem \ref{thm: maxnumber pedforest}.

%\bibliographystyle{plain}     
%%\bibliographystyle{abbrv}
%\bibliography{bio_camilo}

\end{document}